\newtheorem{theorem}{Theorem}[section]
\newtheorem{definition}[theorem]{Definition}
\newtheorem{lemma}[theorem]{Lemma}
\newtheorem{proposition}[theorem]{Proposition}
\newtheorem{corollary}[theorem]{Corollary}
\theoremstyle{remark}
\newtheorem{remark}[theorem]{Remark}
 \theoremstyle{plain}
\newtheorem*{theorem*}{Theorem}
\newtheorem*{proposition*}{Proposition}
\numberwithin{equation}{section}
\newcommand{\EE}{{\mathbb{E}}}
\newcommand{\NN}{{\mathbb{N}}}
\newcommand{\ZZ}{{\mathbb{Z}}}
\newcommand{\RR}{{\mathbb{R}}}
\newcommand{\QQ}{{\mathbb{Q}}}
\DeclareMathOperator{\rk}{rk}
\DeclareMathOperator{\vol}{vol}
\DeclareMathOperator{\supp}{supp}
\newcommand{\diag}{\mathrm{diag}}
\newcommand{\tp}[1]{^\mathrm{t}{#1}}
\newcommand{\SL}{\mathrm{SL}}
\newcommand{\Sp}{\mathrm{Sp}}
\newcommand{\SO}{{\mathrm{SO}}}
\newcommand{\Id}{\mathrm{Id}}
\newcommand{\vx}{\mathbf x}
\newcommand{\vw}{\mathbf w}
\newcommand{\vv}{\mathbf v}
\providecommand{\ve}{\mathbf{ e}}
\newcommand{\Mat}{\mathrm{Mat}}
\newcommand{\lcm}{\mathrm{lcm}}
\renewcommand{\varpi}{\pi}%{\clubsuit}
\newcommand{\origin}{\mathbf 0}
\providecommand{\Lie}[1]{\mathfrak{#1}}
\newcommand{\tr}{\mathrm{tr}}
\providecommand{\im}{\mathrm{im}}
\newcommand{\bs}[1]{\boldsymbol{#1}}
\newcommand{\Gr}{\mathrm{Gr}}
\definecolor{cmd}{rgb}{1.0, 0.35, 0.21}
\begin{document}
\title[Distribution of values of symplectic forms]{Distribution of values at tuples of integer vectors under symplectic forms}

\author{Jiyoung Han}
\email{jiyoung.han@pusan.ac.kr}
\address{Department of Mathematics Education, Pusan National University, Busan 46241, Republic of Korea}

%\subjclass[2010]{11H60, 11P21, 37A45}

\maketitle
\begin{abstract}
We investigate lattice-counting problems associated with symplectic forms from the perspective of homogeneous dynamics.
In the qualitative direction, we establish an analog of Margulis theorem for symplectic forms, proving density results for tuples of vectors.
Quantitatively, we derive a volume formula having a certain growth rate, and use this and Rogers' formulas for a higher rank Siegel transform to obtain the asymptotic formulas of the counting function associated with a generic symplectic form.

We further establish primitive and congruent analogs of the generic quantitative result. For the primitive case, we show that the lack of completely explicit higher moment formulas for a primitive higher rank Siegel transform does not obstruct obtaining quantitative statements.
% We study lattice-counting problems for symplectic forms via homogeneous dynamics. Qualitatively, we probe an analog of Margulis theorem, establishing denseness results for vector tuples of rank $2\le k\le 2n-1$. 
% Quantitatively, we obtain a volume formula and, by refining Rogers' inequality results, derive asymptotic formulas under the condition $2n\ge k^2+3$.
% We also establish the primitive analog, showing that the absence of explicit moment formulas does not hinder the argument.
\end{abstract}
%\tableofcontents
\section{Introduction}

The  famous Margulis theorem, which is the complete answer to the Oppenheim conjecture, says that a non-degenerate and indefinite quadratic form $Q$ has the dense image set of integer vectors in $\RR$ if and only if $Q$ is not a scalar multiple of a form with integer coefficients.
He achieved the theorem using the dynamical property of the unipotent flow of $\SO(2,1)^\circ$ on the homogeneous space $\SL_3(\RR)/\SL_3(\ZZ)$ (after reduction to the case of dimension $3$) that any orbit of the unipotent flow is either unbounded or closed.
Using a similar methodology, Dani and Margulis \cite{DM1989} proved the rank-2 analog of Oppenheim conjecture, saying that for any non-degenerate and indefinite quadratic form $Q$, the image set 
\[
\left\{(Q(\vv_1), Q(\vv_2), Q(\vv_1, \vv_2)): \vv_1, \vv_2 \in \ZZ^d \right\},
\] 
where $Q(\vv_1, \vv_2)=\frac 1 2 \left(Q(\vv_1+\vv_2) - Q(\vv_1)-Q(\vv_2)\right)$ is the symmetric bilinear form corresponding to $Q$, is dense in the possible range $\left\{(Q(\vv_1), Q(\vv_2), Q(\vv_1, \vv_2)): \vv_1, \vv_2 \in \RR^d \right\}$ in $\RR^3$ if and only if $Q$ is irrational. The theorem is extended to the case of rank $k$ for $k\le d-1$ using Ratner's theorem.
%%%
%We remark that 
Meanwhile, there has been studied qualitative Oppenheim-conjecture typed problems in many perspectives. 
Borel and Prasad \cite{BP1992} established an S-arithmetic analog, Mohammadi \cite{Mohammadi2011} obtained an analog for fields of positive characteristic except powers of $2$, and Gorodnik \cite{Gorodnik2004} proved the Oppenheim conjecture for pairs of a quadratic form and a linear form, and Sargent \cite{Sargent2014} studied the distribution of images of several linear forms at integer vectors lying on the level set of a rational non-degenerate indefinite quadartic form.

%%%%%%%%%%%%%%%%%%%%%%%%%%%%%%%%%%%%%%%%%%%%%%%%%%
Our first main goal is considering a higher-rank Oppenheim conjecture for symplectic forms, i.e., non-degenerate skew-symmetric forms defined on $\RR^{2n}$. Let $\langle \cdot, \cdot \rangle$ be the standard symplectic form on $\RR^{2n}$ given as
\begin{equation}\label{J_n}
\langle \vv_1, \vv_2 \rangle=\vv_1^T \left(\begin{array}{cc} 
& I_n\\ -I_n \end{array}\right) \vv_2=\vv_1^T J_n \vv_2.
\end{equation}
It is well-known that any symplectic form on $\RR^{2n}$ is the scalar multiple of the conjugate of the standard symplectic form by an element of $\SL_{2n}(\RR)$, i.e., the form is given as
\[
c\langle g\vv_1, g\vv_2 \rangle
\] 
for some $c\in \RR-\{0\}$ and $g\in \SL_{2n}(\RR)$. In this article, we assume that $c=1$ and denote arbitrary (scaled) symplectic form by
$$\langle \vv_1, \vv_2 \rangle^g=\langle g\vv_1, g\vv_2 \rangle,\quad \forall \vv_1,\vv_2\in \RR^{2n},$$ 
where $g\in \SL_{2n}(\RR)$.

Following the case of quadratic forms, we will say that a symplectic form $\langle \cdot, \cdot \rangle^g$ is \emph{rational} if $\langle \cdot, \cdot \rangle^g$ is a scalar multiple of a symplectic form with integer coefficients, and \emph{irrational} if $\langle \cdot, \cdot \rangle^g$ is not rational.

\begin{theorem}\label{thm: qualitative} Let $n\ge 2$. Assume that $2\le k\le n+2$ for $n\ge 3$ and $k=2,\;3$ when $n=2$.
Let $\langle \cdot, \cdot \rangle^g$ be a symplectic form on $\RR^{2n}$ associated with $g\in \SL_{2n}(\RR)$. The image set
\[
\left\{(\langle\vv_i, \vv_j\rangle^g)_{1\le i<j\le k}: \vv_1, \ldots, \vv_k \in \ZZ^{2n}\right\}
\]
is dense in $\RR^{\frac 1 2 k(k-1)}$ if and only if $\langle \cdot, \cdot \rangle$ is not a scalar multiple of any symplectic forms with integer coefficients.
\end{theorem}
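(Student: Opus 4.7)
The ``only if'' direction is immediate: rationality means $g^T J_n g = cM$ for some $c \in \RR^\times$ and integer skew $M$, forcing $\langle v_i,v_j\rangle^g \in c\ZZ$ for all $v_i \in \ZZ^{2n}$ and hence preventing density in $\RR^{k(k-1)/2}$. For the converse, assume the form is irrational, and set $G = \SL_{2n}(\RR)$, $\Gamma = \SL_{2n}(\ZZ)$, and $H = \Sp_{2n}(\RR) = \{h \in G : h^T J_n h = J_n\}$. The strategy reduces density of the image set to density of the orbit $Hg\Gamma$ in $G/\Gamma$ as follows: given a target $(a_{ij}) \in \RR^{k(k-1)/2}$, realize it as $A = W^T J_n W$ for some $W = [w_1|\cdots|w_k] \in \Mat_{2n \times k}(\RR)$ of rank $k$, and extend to a unimodular matrix $W' = [W \mid \star] \in G$. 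If $Hg\Gamma$ is dense, one finds $h \in H$ and $\gamma \in \Gamma$ with $hg\gamma$ close to $W'$; setting $v_i := \gamma e_i \in \ZZ^{2n}$, the $H$-invariance of the form gives
\[
\langle v_i,v_j\rangle^g = \langle hgv_i, hgv_j\rangle \approx \langle w_i,w_j\rangle = a_{ij},
\]
yielding the required density.

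To prove density of $Hg\Gamma$ in $G/\Gamma$, observe that $H$ is connected semisimple without compact factors, hence generated by $\mathrm{Ad}$-unipotent one-parameter subgroups. Ratner's orbit closure theorem gives $\overline{Hg\Gamma} = Lg\Gamma$ for some closed connected $H \subseteq L \subseteq G$ with $L \cap g\Gamma g^{-1}$ a lattice in $L$. By Dynkin's classification of maximal subalgebras of $\sl_{2n}$, $\mathfrak{sp}_{2n}$ is maximal in $\sl_{2n}$ for $n \ge 2$, hence $L = H$ or $L = G$. In the case $L = H$, the group $\Gamma \cap g^{-1}Hg$ is a lattice in the stabilizer $\mathrm{Stab}_G(g^T J_n g) = g^{-1}Hg$, hence Zariski dense in it by Borel density. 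Since the Zariski closure of a subset of $\Gamma$ is $\QQ$-defined and the stabilizer of a non-degenerate skew form determines that form up to scalar, $g^T J_n g$ must be a scalar multiple of an integer skew matrix, contradicting irrationality. Therefore $L = G$ and the orbit is dense, completing the argument.

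The main obstacle is the realizability step at the edge cases in $k$. For $k \le n$ every skew $A$ arises as pairings of a full-rank $W$, so the reduction is immediate. The range restriction $k \le n+2$ (resp.\ $k \le n+1$ when $n = 2$) reflects that for $k$ close to $2n$ algebraic identities create obstructions to density: for $k = 2n$ the Pfaffian identity
\[
\mathrm{Pf}(V^T g^T J_n g V) = \det(V) \cdot \mathrm{Pf}(g^T J_n g)
\]
confines the Pfaffian of the image matrix to the discrete set $\mathrm{Pf}(g^T J_n g)\cdot \ZZ$ and destroys density, which already forbids $k = 2n = 4$ when $n = 2$. The delicate part of the proof is thus verifying the reduction cleanly at the boundary cases $k = n+1$ and $k = n+2$, where full-rank realizations of $A$ exist only on a Zariski-open subset of targets (e.g.\ $A = 0$ is excluded for $k = n+1$, since a $k$-dimensional isotropic subspace requires $k \le n$) and one must check that density on this subset already implies density in $\RR^{k(k-1)/2}$.
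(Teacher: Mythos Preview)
Your plan is correct and matches the paper's proof in all essential respects: Ratner's orbit closure theorem together with maximality of $\Sp(2n,\RR)$ in $\SL_{2n}(\RR)$ gives the dichotomy on $\overline{Hg\Gamma}$, Borel density plus the fact that the stabilizer determines the form up to scalar rules out the closed-orbit case under irrationality, and what remains is the realizability of a dense set of targets. The only organizational difference is in this last step, which you flag as delicate: rather than extending a rank-$k$ matrix $W$ to an element of $G$, the paper argues directly that orbit density yields $\overline{\{(\langle v_i,v_j\rangle^g):v_i\in\ZZ^{2n}\}}\supseteq\{(\langle v_i,v_j\rangle^g):v_i\in\RR^{2n}\}$ and then handles the boundary cases $k=n+1,\,n+2$ by an explicit iterative construction---choosing each $v_\ell$ inside the affine slice $\ker L_{\ell-1}+v'_\ell$ of dimension $2n-\ell+1$ so as to stay linearly independent through $\ell=n+1$, and invoking surjectivity of $L_{n+1}:\RR^{2n}\to\RR^{n+1}$ for the final vector---thereby hitting every target $(\xi_{ij})$ with all $\xi_{ij}\neq 0$ without needing a genericity argument on the Gram-matrix side.
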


When $k=1$, since any symplectic form is skew-symmetric, only possible value for a symplectic form at any integer vector (or any vector) is $0$, hence Theroem~\ref{thm: qualitative} does not exist.
When $k=2$, any symplectic form on $\RR^{2n}\times \RR^{2n}$ can be viewed as the quadratic form on $\RR^{4n}$ of signature $(2n,2n)$.
Thus, when $k=2$, \Cref{thm: qualitative} is the special case of Margulis theorem.

In \cite{EMM1998}, Eskin, Margulis and Mozes found conditions of quadratic forms $Q$ of signature $(p,q)$, where $p\ge 3$ and $q\ge1$, that have the asymptotic formula of the number of integer vectors $\vv$ such that  $\|\vv\|<T$ and $Q(\vv)\in (a,b)$ for a given $(a,b)\subseteq \RR$, as $T\rightarrow \infty$. See \cite{EMM2005} for quadratic forms of signature $(2,2)$, \cite{WKim2024} of signature $(2,1)$. In \cite{MM2011}, Margulis and Mohammadi obtained a similar result for inhomogeneous quadratic forms.
For other types of Oppenheim-conjecture problems, Sargent \cite{Sargent2014b} quantified his result, Han, Lim and Mallahi-Karai \cite{HLMK2017} proved the quantitative version of the theorem of Borel--Prasad on the $S$-arithmetic space, and the same authors obtained the quantitative result of the Gorodnik theorem for a pair of a quadratic form and a linear form. 

If we relax the question to consider asymptotic formulas in a generic sense, one can formulate a broader family of problems in the quantitative Oppenheim framework, and even obtain information on error terms. Most of these results are using Siegel transforms and the first and second moment formulas of them, combining with probabilistic methods.
Athreya and Margulis \cite{AM2018} derived the power-saving error bound of the asymptotic formula for generic quadratic forms, and Kelmer and Yu \cite{KY2020} established a similar result for generic homogeneous polynomials of degree $d\ge 2$ on certain orbits. For inhomogeneous cases, there are results of Marklof \cite{Marklof2003} for the case of dimension 4, and of Ghosh, Kelmer and Yu \cite{GKY2022, GKY2023}. The author \cite{Han2022} computed the higher moment formulas for an $S$-arithmetic Siegel transform and derived the $S$-arithmetic random quantitative result. In \cite{BGH2021}, Bandi, Ghosh and the author proved the random quantitative Oppenheim conjecture for a system of a quadratic form and several linear forms.
See also \cite{Hanquandratic} for the random quantitative analog of the theorem of Dani and Margulis, considering values of a quadratic form at pairs of two integer vectors. 

Our next result is the random quantitative result of Theorme~\ref{thm: qualitative}.

\begin{theorem}\label{thm: quantitative} Assume that $k\ge 2$ and $2n \ge k^2+3$.
Let $\mathcal I=\left\{(a_{ij}, b_{ij})\right\}_{1\le i<j\le k}$ be a collection of any bounded intervals in $\RR$.
For a symplectic form $\langle \cdot, \cdot \rangle^g$, denote
\[
N_{g,\mathcal I}(T)=\#\left\{(\vv_1, \ldots, \vv_k)\in (\ZZ^{2n})^k: \begin{array}{c}
\langle \vv_i,\vv_j\rangle^g\in (a_{ij}, b_{ij})\;\text{for}\; 1\le i<j\le k;\\
\|\vv_\ell\|< T\;\text{for}\;1\le \ell\le k
\end{array}\right\}.
\]
There is $\delta_0>0$ so that for any $\delta\in (\delta_0, 1)$, it follows that for almost all $g$,
\[
N_{g,\mathcal I}(T)=c_g \prod_{1\le i<j\le k} (b_{ij}-a_{ij})\cdot T^{2nk-k(k-1)} + O_{g,\mathcal I}\left(T^{\delta(2nk-k(k-1))}\right)
\]
for some constant $c_g>0$ depending only on $g$.
\end{theorem}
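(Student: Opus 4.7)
The argument broadly follows the random-quantitative framework of Athreya--Margulis for the Oppenheim problem, adapted to the higher-rank symplectic setting, with the $k$-fold Siegel transform on $\SL_{2n}(\RR)/\SL_{2n}(\ZZ)$ as the central tool. Changing variables $\vu_\ell = g\vv_\ell$ rewrites
\[
N_{g,\mathcal I}(T) = \#\{(\vu_1,\ldots,\vu_k)\in (g\ZZ^{2n})^k \setminus\{\vo\} : g^{-1}\vu_\ell \in B_T,\ \langle \vu_i,\vu_j\rangle \in (a_{ij},b_{ij})\},
\]
which is, up to the $g$-dependence of the ambient ellipsoid $gB_T^k$, a higher-rank Siegel transform of the indicator of
\[
A_{T,\mathcal I}^g = \{\vu\in(\RR^{2n})^k : g^{-1}\vu_\ell\in B_T,\ \langle\vu_i,\vu_j\rangle\in(a_{ij},b_{ij})\},
\]
evaluated at the lattice $g\ZZ^{2n}$. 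By the volume formula established earlier in the paper,
\[
\vol(A_{T,\mathcal I}^g) = c_g \prod_{i<j}(b_{ij}-a_{ij})\, T^{2nk-k(k-1)} + O_g\!\left(T^{2nk-k(k-1)-\eta}\right)
\]
for some $\eta>0$ and a continuous positive function $g\mapsto c_g$; the exponent $2nk - k(k-1)$ reflects that each of the $\binom{k}{2}$ bilinear constraints contributes a factor of $T^{-2}$ relative to the full box $B_T^k$ of volume $\sim T^{2nk}$.

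Next I would match $N_{g,\mathcal I}(T)$ to this volume in mean and variance via Rogers' higher-rank Siegel formulas. The first-moment formula yields $\EE[N_{g,\mathcal I}(T)] = \vol(A_{T,\mathcal I}^g)+\text{(lower order)}$, producing the main term $c_g \prod_{i<j}(b_{ij}-a_{ij})\,T^{2nk-k(k-1)}$. For the fluctuations, Rogers' second-moment formula expresses $\EE[N_{g,\mathcal I}(T)^2]$ as a sum indexed by orbits of $\GL_k(\ZZ)$ acting on $k$-tuples of lattice vectors; under the hypothesis $2n\ge k^2+3$ each cross-term integral converges, yielding an expected variance bound of the form $\operatorname{Var}[N_{g,\mathcal I}(T)]\ll T^{2nk-k(k-1)}$. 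Applying Chebyshev's inequality with this bound to a geometric subsequence $T_N = q^N$ shows, for any $\delta>1/2$, that the probability of $|N_{g,\mathcal I}(T_N) - c_g\prod_{i<j}(b_{ij}-a_{ij})\,T_N^{2nk-k(k-1)}|$ exceeding $T_N^{\delta(2nk-k(k-1))}$ is summable in $N$. Borel--Cantelli then gives the asymptotic along $\{T_N\}$ for almost every $g$, and a standard monotonicity interpolation — sandwiching $N_{g,\mathcal I}(T)$ for $T\in[T_N,T_{N+1}]$ between its values at the endpoints with slightly enlarged and shrunk intervals $\mathcal I$ — extends the estimate to all $T$ provided $q$ is chosen close enough to $1$ depending on $\delta$. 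This produces $\delta_0=1/2$.

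The hardest step will be the variance estimate. Extracting a clean second-moment bound from Rogers' higher-rank formula requires (i) a careful truncation of the Siegel transform to tame its unbounded behaviour caused by short lattice vectors, (ii) a term-by-term analysis of the Rogers cross-sums, bounding each contribution by an integral of $\one_{A_{T,\mathcal I}^g}$ against its $\GL_k(\ZZ)$-translates, and (iii) absorbing the mild $g$-dependence of the test function into continuous error factors. The dimensional assumption $2n\ge k^2+3$ enters precisely at stage (ii) to guarantee convergence of the Rogers cross-terms in this higher-rank symplectic setting.
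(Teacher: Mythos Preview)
Your overall architecture---Siegel transform, volume formula for the main term, Rogers' moment formulas for mean and variance, Chebyshev plus Borel--Cantelli along a subsequence, then monotone interpolation---matches the paper's. But two of your quantitative claims are wrong, and they combine to give you an incorrect value of $\delta_0$.

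\textbf{The variance bound is too optimistic.} You assert $\operatorname{Var}[N_{g,\mathcal I}(T)]\ll T^{2nk-k(k-1)}$, i.e.\ variance of the order of the mean. That is the $k=1$ phenomenon and fails for $k\ge 2$. In Rogers' formula for the second moment, after cancelling the square of the first moment, the surviving terms are indexed by $r\times 2k$ integer matrices $D$ of rank $r\le 2k-1$ that are \emph{not} block-diagonal with two rank-$k$ blocks. The hypothesis $2n\ge k^2+3$ makes this sum converge, but convergence says nothing about its size in $T$. The dominant contribution comes from $r=2k-1$, more precisely from the pair $(r_1,r_2)=(k,k-1)$ in the paper's bound
\[
\operatorname{Var}\;\ll\;\max_{1\le r_1+r_2\le 2k-1}\vol(E_{r_1})\vol(E_{r_2}),
\]
which is of order $T^{(2nk-k(k-1))+(2n(k-1)-(k-1)(k-2))}$. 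This is $\gg T^{2nk-k(k-1)}$ for $k\ge 2$; the variance is \emph{much larger} than the mean, and the Chebyshev step only gives $\delta_0$ strictly bigger than $1/2$. The paper's $\delta_0$ is an explicit rational expression in $n,k$ that is in fact close to $1$.

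\textbf{The $g$-dependence of the test set is not ``mild''.} The region $A^g_{T,\mathcal I}$ depends on the \emph{same} $g$ at which you evaluate the Siegel transform, so a single Chebyshev/Borel--Cantelli run over the space of lattices does not apply: you would need, for each fixed test function, a null set of bad $g$'s, but the test function changes with $g$. The paper handles this by restricting to a compact $\mathcal K\subset G_{2n}$, taking a \emph{polynomial} sequence $T_\ell=\ell^\alpha$, covering $\mathcal K$ by $O(\ell^{(n+1)(2n-1)})$ small neighbourhoods $h\mathcal O_\ell$, and on each neighbourhood replacing $E_{g,T}$ by fixed sets $E^{\pm}_{h,\ell}$ via the discrepancy monotonicity lemma. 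Chebyshev is then applied at each of the $O(\ell^{\dim G})$ net points and the results are summed; the extra $\ell^{(n+1)(2n-1)}$ factor enters the Borel--Cantelli series and further pushes $\delta_0$ upward. Your geometric subsequence $T_N=q^N$ together with ``absorbing the $g$-dependence into continuous error factors'' does not address this.

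A minor point: no truncation of the Siegel transform is needed. Rogers' formula applies to the characteristic function of $E_{g,T}$ directly and already yields a finite second moment; the paper does not truncate.
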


The main term of the asymptotic formula is an estimate of the volume of the region where those $(\vv_1, \ldots, \vv_k)\in (\RR^{2n})^k$ satisfying that $\langle \vv_i, \vv_j\rangle^g \in (a_{ij}, b_{ij})$ and $\|\vv_\ell\|<T$.

\begin{theorem}\label{thm: volume formula}
Assume that $n\ge 2$ and $2\le k\le 2n-2$. Let $\langle \cdot, \cdot \rangle^g$ be any symplectic form on $\RR^{2n}$.
Choose $(a_{ij},b_{ij})\subseteq \RR$ for $1\le i<j \le k$ such that for any $(t_{ij})_{i,j}$ with $a_{ij}<t_{ij}<b_{ij}$, there is $(\vv_1, \ldots, \vv_k)\in (\RR^{2n})^k$ for which $\langle \vv_i,\vv_j\rangle^g=t_{ij}$ for all $1\le i<j\le k$.
There is $T_0>0$ depending on $g$ and $(a_{ij}, b_{ij})$'s such that if $T>T_0$, it holds that
\[\begin{split}
&\vol\left(\left\{(\vv_1, \ldots, \vv_k)\in (\RR^{2n})^k: \begin{array}{c}
\langle \vv_i,\vv_j \rangle^g\in (a_{ij}, b_{ij})\;\text{for}\;1\le i<j\le k;\\[0.05in]
\|\vv_\ell\|<T\;\text{for}\;1\le \ell \le k \end{array}\right\}\right)\\
&\hspace{1in} =c_g \prod_{1\le i<j\le k} (b_{ij}-a_{ij}) \cdot T^{2nk - k(k-1)} + \left\{\begin{array}{cl}
O_g\left(T^{2nk-k(k-1)-1}\right),&\text{if }2\le k\le 2n-3;\\[0.05in]
O_g\left(T^{2nk-k(k-1)-\frac 1 2}\right),&\text{if }k=2n-2.\end{array}\right.
\end{split}\]
\end{theorem}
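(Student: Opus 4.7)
The plan is to rescale, apply the coarea formula, and carry out a careful asymptotic analysis of the resulting pushforward density near the origin. Substituting $\vv_\ell = T\vu_\ell$ rewrites the volume as
\[
T^{2nk}\cdot\vol\!\left(\!\left\{\vu \in (\RR^{2n})^k: \|\vu_\ell\|<1,\;\langle \vu_i,\vu_j\rangle^g\in (a_{ij}/T^2,\, b_{ij}/T^2)\right\}\!\right),
\]
reducing the asymptotics to the behavior near the origin of the pushforward $F_*(\mathrm{Leb}|_B)$, where $B = \{\vu : \|\vu_\ell\|<1 \text{ for all } \ell\}$ and $F(\vu_1,\dots,\vu_k) = (\langle\vu_i,\vu_j\rangle^g)_{i<j}$. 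Assembling the vectors into a $2n\times k$ matrix $V$ and setting $J_g = g^T J_n g$, the map $F$ takes the closed form $F(V) = V^T J_g V$ valued in skew-symmetric $k\times k$ matrices, and a direct calculation gives
\[
dF(dV) = dV^T J_g V - (dV^T J_g V)^T,
\]
which is surjective onto skew matrices whenever $V$ has full column rank. Consequently $F$ is a submersion off the rank-deficient algebraic subvariety $\Sigma = \{V : \rank V < k\}$.

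On $B\setminus\Sigma$ the coarea formula gives
\[
\vol\!\left(\{F(\vu)\in A\}\cap B\right) \;=\; \int_A \rho(s)\,ds, \qquad \rho(s) \;=\; \int_{F^{-1}(s)\cap B}\frac{d\mathcal{H}^{2nk-k(k-1)/2}(\vu)}{\sqrt{\det(DF\cdot DF^T)}}.
\]
Defining $c_g := \lim_{s\to 0}\rho(s)$, applying this with $A = \prod_{i<j}(a_{ij}/T^2,\, b_{ij}/T^2)$, and multiplying by $T^{2nk}$ produces the target leading term $c_g\prod_{i<j}(b_{ij}-a_{ij})\cdot T^{2nk-k(k-1)}$. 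Positivity of $c_g$ is forced by the realization hypothesis on $(a_{ij},b_{ij})$, which exhibits noncritical points of $F$ mapping to a full-measure neighborhood of the origin. The remaining task is to bound $\int_A |\rho(s)-c_g|\,ds$ in terms of $T$.

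The irregularity of $\rho$ at $0$ comes from two sources: fibers $F^{-1}(s)$ approaching the critical locus $\Sigma$ (where the Jacobian degenerates), and fibers approaching the boundary $\partial B$ tangentially. A stratification by the rank profile of $V$ and by the tangency type to $\partial B$, combined with quantitative estimates on the Jacobian along each stratum, yields $|\rho(s) - c_g| \lesssim |s|$ when $2 \le k \le 2n - 3$, producing the error $O(T^{-2}) \cdot T^{2nk-k(k-1)}$, and $|\rho(s) - c_g| \lesssim |s|^{1/2}$ at the borderline $k = 2n - 2$, producing $O(T^{-1}) \cdot T^{2nk-k(k-1)}$.

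The main obstacle will be this final step, particularly the boundary case $k = 2n - 2$. When $k$ exceeds $n$ the zero fiber $F^{-1}(0)$ lies entirely inside $\Sigma$, since every isotropic subspace for a nondegenerate symplectic form on $\RR^{2n}$ has dimension at most $n$, so the fibers $F^{-1}(s)$ for $s \ne 0$ degenerate substantially as $s \to 0$. Extracting both the limiting density $c_g = \lim_{s\to 0}\rho(s)$ and the sharp Hölder exponent $1/2$ at the endpoint then requires combining the coarea analysis with a detailed understanding of the singular stratification of $F$, and it is this combination that produces the dichotomy between the two error regimes.
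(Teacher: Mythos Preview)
Your coarea setup is natural, but the proposal leaves unproved exactly the step that carries all the content. You correctly observe that for $k>n$ the zero fiber $F^{-1}(0)$ lies entirely inside the rank-deficient locus $\Sigma$, so the coarea integral over $B\setminus\Sigma$ does not even see the limiting fiber; establishing that $c_g=\lim_{s\to 0}\rho(s)$ exists, is positive, and that $\rho$ satisfies the H\"older bounds you assert is then the entire problem, not a technicality to defer. You state those bounds ($|\rho(s)-c_g|\lesssim |s|$ and $\lesssim |s|^{1/2}$) without any argument, and they would in fact yield errors $T^{-2}$ and $T^{-1}$ relative to the main term---strictly stronger than the $T^{-1}$ and $T^{-1/2}$ actually claimed---which suggests the exponents are guesses rather than the output of an analysis. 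Your final paragraph essentially concedes this.

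The paper sidesteps the coarea singularity by a different mechanism. It first treats \emph{smooth} cutoffs $h_\ell$ (Proposition~3.1): after the same rescaling, each successive $\vv_\ell$ is decomposed as $\sum_{i<\ell} t_{i\ell}\,\bs{\xi}_{i\ell}+\vw_\ell$ with $\vw_\ell\in\bigcap_{i<\ell}\ker\langle\vv_i,\cdot\rangle$, so that the scalars $t_{i\ell}$ carry the values $\langle\vv_i,\vv_\ell\rangle$ while the $\vw_\ell$ parametrize the cone $\mathcal C_{I,k}=F^{-1}(0)$ directly. This iterative splitting never requires the tuple to have full rank, hence works uniformly for all $2\le k\le 2n-2$; it produces the constant $c_g=\int_{\mathcal C_{I,k}}\prod_\ell h_\ell(\vw_\ell)\prod_{\ell=1}^{k-1}\|\vw_\ell\|^{k-\ell}\,d\vw$ together with error terms controlled by $\|h_\ell\|_\infty$ and the first-derivative norms $\mathcal S_1(h_\ell)$. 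The indicator of the unit ball is then squeezed between smooth $h^\pm_\delta$ with $\mathcal S_1(h^\pm_\delta)=O(\delta^{-1})$, and the dichotomy $T^{-1}$ versus $T^{-1/2}$ arises from optimizing the smoothing parameter $\delta$ against these error terms---not from any H\"older exponent of a pushforward density. If you want to rescue the coarea route, the substantive work is to resolve $F$ along $\Sigma$ and extract the limiting density with quantitative control; the paper's iterated decomposition is effectively such a resolution written out in explicit coordinates.
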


Let $P(\ZZ^{2n})$ be the set of primitive integer vectors in $\RR^{2n}$. It is straightforward to observe that a symplectic form satisfying the same condition in Theorem~\ref{thm: qualitative} also has a dense image when restricted to tuples of primitive vectors.
For the primitive analog of Theorem~\ref{thm: quantitative}, we will make use of higher moment formulas for the primitive Siegel transform, although the complete form of such formulas is unknown.

\begin{theorem}\label{thm: quantitative prime} Let $k, n\in \NN$ and $\mathcal I$ be as in \Cref{thm: quantitative}.
For a symplectic form $\langle\cdot, \cdot\rangle^g$, denote
\[
\widehat{N}_{g,\mathcal I}(T)=\#\left\{(\vv_1, \ldots, \vv_k)\in P(\ZZ^{2n})^k: \begin{array}{c}
\langle \vv_i,\vv_j\rangle^g \in (a_{ij}, b_{ij})\;\text{for}\; 1\le i<j\le k;\\
\|\vv_i\|< T\;\text{for}\;1\le i\le k
\end{array}\right\}.
\]
Let $\delta_0>0$ be as in \Cref{thm: quantitative} and fix $\delta\in (\delta_0, 1)$. For almost all $g\in \SL_{2n}(\RR)$, it holds that
\[
\widehat{N}_{g,\mathcal I}(T)=\frac{c_g}{\zeta(2n)^k}\prod_{1\le i<j\le k} (b_{ij}-a_{ij})\cdot T^{2nk-k(k-1)} + O_{g,\mathcal I}\left(T^{\delta(2nk-k(k-1))}\right),
\]
where $c_g>0$ is as in \Cref{thm: quantitative} and $\zeta(\cdot)$ is the Reimann-zeta function.
\end{theorem}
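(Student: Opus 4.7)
The plan is to derive \Cref{thm: quantitative prime} from \Cref{thm: quantitative} via a $k$-fold Möbius inversion recording the greatest common divisor of each individual vector. Writing any $\vv_\ell \in \ZZ^{2n}\setminus\{\origin\}$ uniquely as $\vv_\ell = m_\ell \vw_\ell$ with $m_\ell = \gcd(\vv_\ell) \in \NN$ and $\vw_\ell \in P(\ZZ^{2n})$, and applying $\one_{\gcd(\vw_\ell)=1} = \sum_{m_\ell \mid \gcd(\vw_\ell)} \mu(m_\ell)$ in each coordinate, one obtains
\[
\widehat{N}_{g, \mathcal I}(T) = \sum_{\vm \in \NN^k} \mu(m_1)\cdots\mu(m_k)\, N^{\vm}_{g, \mathcal I}(T),
\]
where $N^{\vm}_{g, \mathcal I}(T)$ counts $\vv \in (\ZZ^{2n})^k$ with $\|\vv_\ell\| < T/m_\ell$ and $\langle \vv_i, \vv_j\rangle^g \in (a_{ij}/(m_i m_j), b_{ij}/(m_i m_j))$. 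The sum is effectively finite, since $N^\vm_{g,\mathcal I}(T) = 0$ unless every $m_\ell \le T$. The usefulness of this reduction is that Möbius inversion moves all primitivity/arithmetic structure to the main-term side while keeping the delicate quantitative analysis on the non-primitive side handled by \Cref{thm: quantitative}.

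The key technical input is an inhomogeneous refinement of \Cref{thm: quantitative} allowing distinct norm bounds $T_1, \ldots, T_k$ on the $k$ vectors. A change of variables $\vv_\ell = T_\ell \vu_\ell$ in the volume computation underlying \Cref{thm: volume formula} yields a main term
\[
c_g \prod_{i<j}(b_{ij} - a_{ij}) \prod_\ell T_\ell^{2n-k+1},
\]
valid once $\min_\ell T_\ell$ exceeds a threshold depending on $g$ and $\mathcal I$ (note $k(2n-k+1) = 2nk - k(k-1)$, matching the uniform case). The rest of the proof of \Cref{thm: quantitative} --- in particular the Rogers-type higher moment formulas for the non-primitive Siegel transform which drive the variance estimate --- carries over essentially verbatim to this inhomogeneous setting, as those moment inputs are insensitive to whether the norm cutoff is uniform.

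Specializing $T_\ell = T/m_\ell$ in the inhomogeneous main term gives
\[
N^{\vm}_{g,\mathcal I}(T) \;=\; c_g \prod_{i<j}(b_{ij} - a_{ij})\, T^{2nk - k(k-1)} \prod_\ell m_\ell^{-2n} \;+\; (\text{error}),
\]
and summing against $\mu(m_1)\cdots\mu(m_k)$ produces a factor of $\prod_\ell \zeta(2n)^{-1} = \zeta(2n)^{-k}$, which is exactly the announced main term of \Cref{thm: quantitative prime}.

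The hard part is controlling the error terms uniformly in $\vm$ on a single full-measure set of $g$. The natural remedy is a threshold split at $m_\ell \le T^\eta$ for a small $\eta > 0$: in the low range, the power-saving error from the inhomogeneous version of \Cref{thm: quantitative} multiplied by the factor $\prod_\ell m_\ell^{-2n\delta}$ (whose sum over $\NN^k$ converges since $2n \ge 2$) is absolutely summable in $\vm$, so one may take the union of the corresponding null exceptional sets; in the high range, a trivial volume-type bound on $N^{\vm}_{g,\mathcal I}(T)$ is absorbed by the allowed error once $\eta$ is tuned. This detour is precisely the point advertised in the introduction: the argument never requires an explicit higher-moment formula for the primitive Siegel transform, because the Möbius inversion routes all moment computations to the non-primitive Siegel transform, where such formulas are classical.
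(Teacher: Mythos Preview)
Your M\"obius-inversion route is genuinely different from the paper's, and you have misread what the introduction advertises. The paper does \emph{not} route around the primitive Siegel transform; it works with $\widehat F$ directly via the incomplete moment formula of \Cref{thm: Rogers primitive}. The point is that although the weights $\widehat c_D$ are not explicitly known, two facts suffice: the bound $\widehat c_D\le c_D$, and the multiplicativity $\widehat c_{D_3}=\widehat c_{D_1}\widehat c_{D_2}$ for block-diagonal $D_3=\mathrm{diag}(D_1,D_2)$. The latter makes the second-moment cancellation in \eqref{eqn2: upper bound} go through unchanged, and the former lets the upper bounds in \textbf{Cases I--III} of \Cref{prop: upper bound} be reused verbatim; this is \Cref{prop: upper bound primitive}. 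Together with \Cref{prop: mean primitive} (same mechanism), the entire Borel--Cantelli machinery of the proof of \Cref{thm: quantitative} transfers with \emph{identical} constants, hence the same $\delta_0$.

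Your argument, by contrast, has a real gap at the summation over $\vm$. Passing to the countable intersection of full-measure sets is fine, but for $g$ in that intersection the error term you inherit in the $\vm$-th summand is $C(g,\vm)\cdot(\text{power of }T)$, where $C(g,\vm)$ is produced by a \emph{separate} Borel--Cantelli run for each $\vm$ and carries no a priori control in $\vm$; your claimed factor $\prod_\ell m_\ell^{-2n\delta}$ presupposes exactly the uniformity that is missing. To repair this one must run a single Borel--Cantelli over the joint family $\{\mathcal B_{\ell,\vm}\}_{\ell\in\NN,\,\vm\in\NN^k}$, feeding the $\vm$-decay of the variance of $\chi^{}_{E^\vm_{h,\ell}}$ into the convergence. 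That is doable, but it is not ``essentially verbatim'': the extra sum over $\vm$ enters the exponent bookkeeping of \eqref{eqn2: main}--\eqref{eqn3: main} and will in general shift the admissible threshold $\delta_0$. Since the statement asserts the \emph{same} $\delta_0$ as in \Cref{thm: quantitative}, this is not cosmetic, and it is precisely what the paper's direct approach avoids.
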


In \cite{AGH2024}, Alam, Ghosh and the author showed the higher moment formulas for a Siegel transform having a congruence condition. Using their formulas, one can show the following theorem. Recall that for $N\in \NN$ and $\vv_0\in \ZZ^{2n}$, 
\[
\vv\equiv \vv_0\mod N
\quad\text{if and only if}\quad
\vv\in \vv_0+N\ZZ^{2n}.
\]

\begin{theorem}\label{thm: quantitative congruence}
    Let $k, n\in \NN$ and $\mathcal I$ be as in \Cref{thm: quantitative}.
    Let $N\in \NN$ and $\vv_0\in \ZZ^{2n}$ with $\gcd(\vv_0, N)=1$ be given.
For a symplectic form $\langle\cdot, \cdot\rangle^g$, denote
\[
{N}_{g,\mathcal I,\vv_0, N}(T)=\#\left\{(\vv_1, \ldots, \vv_k)\in (\vv_0+N\ZZ^{2n})^k: \begin{array}{c}
\langle \vv_i,\vv_j\rangle^g \in (a_{ij}, b_{ij})\;\text{for}\; 1\le i<j\le k;\\
\|\vv_i\|< T\;\text{for}\;1\le i\le k
\end{array}\right\}.
\]
Let $\delta_0>0$ be as in \Cref{thm: quantitative} and fix $\delta\in (\delta_0, 1)$. For almost all $g\in \SL_{2n}(\RR)$, it holds that
\[
{N}_{g,\mathcal I,\vv_0, N}(T)=\frac{c_g}{N^{2nk}}\prod_{1\le i<j\le k} (b_{ij}-a_{ij})\cdot T^{2nk-k(k-1)} + O_{g,\mathcal I}\left(T^{\delta(2nk-k(k-1))}\right),
\]
where $c_g>0$ is as in \Cref{thm: quantitative}.
\end{theorem}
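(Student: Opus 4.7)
The plan is to follow the same template used to prove the generic quantitative statement (Theorem~\ref{thm: quantitative}), but replacing the standard higher-rank Siegel transform with its congruence counterpart introduced in \cite{AGH2024}. For $f\colon (\RR^{2n})^k\to \RR$ of compact support, define
\[
\widetilde{f}_{\vv_0,N}(g)=\sum_{(\vv_1,\ldots,\vv_k)\in(\vv_0+N\ZZ^{2n})^k} f(g\vv_1,\ldots,g\vv_k),
\]
so that taking $f=\ind_{A_T(\mathcal I)}$, where $A_T(\mathcal I)$ is the Borel set appearing in Theorem~\ref{thm: volume formula}, gives exactly ${N}_{g,\mathcal I,\vv_0,N}(T)=\widetilde{\ind}_{A_T(\mathcal I),\vv_0,N}(g)$. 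The first moment formula from \cite{AGH2024} then yields
\[
\int_{\SL_{2n}(\RR)/\SL_{2n}(\ZZ)} \widetilde{\ind}_{A_T(\mathcal I),\vv_0,N}(g)\,d\mu(g)=\frac{1}{N^{2nk}}\vol(A_T(\mathcal I)),
\]
and Theorem~\ref{thm: volume formula} converts this into the main term asserted in Theorem~\ref{thm: quantitative congruence}.

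To upgrade this almost everywhere, I would run the same variance/Borel--Cantelli argument used for Theorem~\ref{thm: quantitative}. Concretely, fix a geometric sequence $T_j=(1+\eta)^j$ and set
\[
F_j(g)=\widetilde{\ind}_{A_{T_j}(\mathcal I),\vv_0,N}(g)-\frac{1}{N^{2nk}}\vol(A_{T_j}(\mathcal I)).
\]
Because $\gcd(\vv_0,N)=1$, the congruence second moment formula of \cite{AGH2024} differs from the unrestricted Rogers-type formula only by explicit combinatorial constants depending on $N$ and $\vv_0$, so the same bound $\int |F_j|^2\,d\mu \ll \|\ind_{A_{T_j}(\mathcal I)}\|_2^2 \ll T_j^{2(2nk-k(k-1))-\alpha}$ for some $\alpha>0$ carries over verbatim, once one exploits the condition $2n\ge k^2+3$ exactly as in the proof of Theorem~\ref{thm: quantitative} to ensure that all intermediate divergent terms are absorbed. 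Chebyshev's inequality along $\{T_j\}$, followed by Borel--Cantelli, gives the pointwise estimate $F_j(g)\ll T_j^{\delta(2nk-k(k-1))}$ for almost every $g$, for any $\delta>\delta_0$.

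The last step is to fill the gaps between the discrete times $T_j$ and $T_{j+1}$ by monotonicity of the counting function, a standard sandwich argument: for $T\in [T_j,T_{j+1}]$, bound ${N}_{g,\mathcal I,\vv_0,N}(T)$ from above and below by the value at the nearest $T_j$, whose volume terms differ by $O((\eta)T_j^{2nk-k(k-1)})$. Choosing $\eta=\eta(\delta)$ small enough makes this discrepancy absorbable into the allowed error term.

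The chief technical hurdle is the second step, namely verifying that the higher moment formulas from \cite{AGH2024}, stated there for $k$-tuples, produce variance estimates of the correct order under the congruence constraint. The unboundedness of the Siegel transform of an indicator function (coming from short lattice vectors) must be handled exactly as in Theorem~\ref{thm: quantitative}, by decomposing $\ind_{A_T(\mathcal I)}$ into an $\alpha$-regular part and a truncation remainder and checking that the congruence moment formulas control the remainder with the same power saving. Once this is in place, the arithmetic density factor $1/N^{2nk}$ is the only change from the unrestricted setting, yielding the stated asymptotic.
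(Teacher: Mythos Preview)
Your proposal has the right architecture but contains two genuine gaps.

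First, the function $F_j$ you integrate is not a Siegel transform of a fixed test function. The region you call $A_T(\mathcal I)$ must be $\{(\vw_1,\ldots,\vw_k):\langle\vw_i,\vw_j\rangle\in(a_{ij},b_{ij}),\ \|g^{-1}\vw_\ell\|<T\}$, which depends on $g$; hence $F_j(g)$ is not of the form $\widetilde f(g\Gamma)-\text{const}$ for a single $f$, and the moment formulas of \cite{AGH2024} do not apply to $\int|F_j|^2\,d\mu$ as written. The paper (following the proof of Theorem~\ref{thm: quantitative}) deals with this by fixing a compact $\mathcal K$, covering it by $O(\ell^{(n+1)(2n-1)})$ translates $h\mathcal O_\ell$ of a shrinking identity neighborhood, and for each fixed $h$ sandwiching $E_{g,T}$ between $g$-independent sets $E^\pm_{h,\ell}$; the discrepancy lemma (Lemma~\ref{lem: discrepancy congruent}) then reduces everything to Chebyshev/Borel--Cantelli for the fixed functions $\chi^{}_{E^\pm_{h,\ell}}$, summed over $h\in\mathcal I_\ell$ and along the polynomial sequence $T_\ell=\ell^\alpha$. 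Your geometric sequence and monotonicity-in-$T$ sandwich handle only the $T$-interpolation, not this $g$-dependence.

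Second, the claim that the congruence second moment ``differs only by explicit combinatorial constants'' and the variance bound ``carries over verbatim'' skips the one nontrivial point the paper actually proves (Proposition~\ref{prop: upper bound congruence}): to obtain $\EE(\mathcal S_{(\vv_0,N)}(\chi_E)^2)-\EE(\mathcal S_{(\vv_0,N)}(\chi_E))^2\le C\max\vol_{r_1}(E_{r_1})\vol_{r_2}(E_{r_2})$ one must show that every term in the squared first moment cancels a term of the second moment. In the congruence setting this means checking that if $D_1\in\mathcal C^k_{r_1,q_1}$ and $D_2\in\mathcal C^k_{r_2,q_2}$ then the block-diagonal matrices $\diag(\tfrac q{q_1}D_1,\tfrac q{q_2}D_2)$ lie in $\mathcal C^{2k}_{r,q}$ with weight $c_{D_1}c_{D_2}/N^{rk}$; this requires a short but genuine argument using the admissibility condition~\eqref{eqn: congruence admissible} and $\gcd(\vv_0,N)=1$. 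Also, for $k\ge2$ the first moment is not exactly $N^{-2nk}\vol(A_T)$: there are lower-rank terms of order $O(T^{2n(k-1)-(k-1)(k-2)})$ (Proposition~\ref{prop: mean congruence}) which are harmless but must be tracked. Finally, the paper uses no smoothing or truncation of $\ind_{A_T}$; characteristic functions are kept throughout.
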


%%\newpage
%Minkowski's second theorem states that for a lattice $\Lambda\subseteq \mathbb R^d$ and a convex body $K\subseteq \RR^d$, the following inequality holds:
%\[
%\vol(K)\le \frac {2^d\det\Lambda}{\lambda_1 \cdots \lambda_d},
%\]
%where $\lambda_1\le \ldots \le \lambda_d$ are the successive minima of $K$ with respect to $\Lambda$.
%
%Later, Betke, Henke and Wills proposed a discrete version of Minkowski's second theorem. They conjectured that if $K$ is an origin-symmetric convex body, then
%\[
%\# (K \cap \Lambda) \le \prod_{i=1}^d \left\lfloor \frac 2 {\lambda_i} +1 \right\rfloor,
%\]
%which directly implies Minkowski's second theorem. The conjecture was proven by the authors themselves in the case of $d=2$ [U. Betke, M. Henk and J. M. Wills, Successive-minima-type inequalities, Discrete Comput. Geom. 9 (1993), 165--175]. For a version that removes the symmetry assumption on the convex body, see [R. Malikiosis, A discrete analogue of Minkowski's second theorem on successive minima, Adv. Geom. 12 (2) (2012), 365--380].
%
%In this article, the author proves that under the same notation, if $k:=\max\{j:\lambda_j\le 1\}\ge 1$, then for any $\mu_1\le \cdots \le \mu_k$ satisfying $0<\mu_i\le \lambda_i$, it holds that
%\[
%\#(K\cap \Lambda) \le \frac {2^k \left(1+\frac {\mu_k} 2\right)^k} {\mu_1 \cdots \mu_k}.
%\]
%Furthermore, the author also shows that the same upper bound applies to general convex bodies under the modified condition that $k:=\max\{j:\lambda_j\le 2\}\ge 1$.  
%

\subsection*{Organization}
% Section 2 %%%
In Section~\ref{Sec:Oppenheim Conjecture for Symplectic Forms}, we establish an analog of Margulis theorem for symplectic forms. 
 We first obtain the result in full generality for the rank range $2\le k \le 2n-1$, showing density in the region $$\left\{(\langle\vv_i, \vv_j\rangle^g)_{1\le i<j\le k}: \vv_1, \ldots, \vv_k \in \RR^{2n}\right\}$$ (see Theorem~\ref{thm: qualitative 2}). We show that this region coincides with $\RR^{\frac 1 2 k(k-1)}$ when $k\le n+2$.
% Section 3 %%%
In Section~\ref{Sec:Volume Formula}, we prove Theorem~\ref{thm: volume formula}. To deduce the volume estimate appearing in Theorem~\ref{thm: volume formula}, as well as the growth rate in $T$, we decompose the Lebesgue measure on $\RR^{2n}$ into the measure supported on the \emph{rank-k-cone} associated with the given symplectic form and the $k(k-1)/2$-number of one-dimensional measures corresponding to values of the symplectic form.
% Section 4 %%%
In Section~\ref{Sec:Use of Rogers' Formulas}, we apply Theorem~\ref{thm: volume formula} and Rogers' formulas to prove Theorem~\ref{thm: quantitative}. The condition $2n\ge k^2+3$ guarantees an upper bound (appropriate for our purpose) on the difference between the first and the second moments of the rank-$k$ Siegel transform (see Definition~\ref{def: Siegel transform}). For this, we refine the results of Rogers in \cite{Rogers55B} (see Theorem~\ref{prop: upper bound}).
% Section 5 %%%
In Section~\ref{Sec:Primitive Analog}, we prove primitive and congruent analogs of Theorem~\ref{thm: quantitative}. For the primitive case, in contrast to the non-primitive case or congruent case, the moment formulas for the rank-$k$ primitive Siegel transform are not known in a completely explicit form if $k\ge 2$. Nevertheless, we show that this does not present a genuine obstruction for deducing Theorem~\ref{thm: quantitative prime}. 
% Appendix %%%
In the Appendix, we contain the proof of the well-known fact that the symplectic group $\Sp({2n},\RR)$ is a maximal connected subgroup of $\SL_{2n}(\RR)$ using the restricted root system of $\Lie{sp}(2n,\RR)$ to the adjoint representation of $\Lie{sl}_n(\RR)$.

\subsection*{Acknowledgment}
I would like to thank Anish Ghosh and Seungki Kim for valuable advice and discussions. This work is supported by the National Research Foundation of Korea (NRF) grant funded by the Korea government (project No. RS-2025-00515082). 

\section{Oppenheim Conjecture for Symplectic Forms}\label{Sec:Oppenheim Conjecture for Symplectic Forms}

\begin{theorem}\label{thm: qualitative 2} Let $n\ge 2$ and $1\le k\le 2n-1$.
Let $\langle\cdot, \cdot\rangle^g$ be a symplectic form on $\RR^{2n}$ associated with $g\in \SL_{2n}(\RR)$. The image set
\[
\left\{\left(\langle\vv_i, \vv_j\rangle^g\right)_{1\le i<j\le k}: \vv_1, \ldots, \vv_k \in \ZZ^{2n}\right\}
\]
is dense in $\{(\langle\vv_i, \vv_j\rangle^g)_{1\le i<j\le k}: \vv_1, \ldots, \vv_k \in \RR^{2n}\}\subseteq \RR^{\frac 1 2 k(k-1)}$ if and only if $\langle \cdot, \cdot\rangle^g$ is irrational.
\end{theorem}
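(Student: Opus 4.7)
The plan is to address the two directions separately. The ``only if'' direction is elementary: if $\langle\cdot,\cdot\rangle^g = c\,\langle\cdot,\cdot\rangle'$ for some $c\in\RR$ and an integer-coefficient symplectic form $\langle\cdot,\cdot\rangle'$, then every coordinate of the image lies in the discrete set $c\ZZ$, which cannot be dense in the continuous, non-discrete image set $\{(\langle\vv_i,\vv_j\rangle^g)_{i<j}: \vv_i\in\RR^{2n}\}$.

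For the ``if'' direction, set $G = \SL_{2n}(\RR)$, $\Gamma = \SL_{2n}(\ZZ)$, $H = \Sp(2n,\RR)$, and study the orbit $Hg\Gamma$ in the homogeneous space $X = G/\Gamma$. Since $H$ is connected and semisimple, and hence generated by one-parameter unipotent subgroups, Ratner's orbit-closure theorem gives $\overline{Hg\Gamma} = Lg\Gamma$ for some closed connected subgroup $L$ with $H\subseteq L\subseteq G$ such that $Lg\Gamma$ carries an $L$-invariant probability measure. The maximality of $H$ as a proper connected Lie subgroup of $G$, which is the content of the Appendix, forces either $L = H$ or $L = G$. The closed-orbit alternative $L = H$ is equivalent to $g^{-1}Hg\cap\Gamma$ being a lattice in $g^{-1}Hg$, which by Borel--Harish-Chandra combined with the classification of $\QQ$-forms of the symplectic group is in turn equivalent to $g^{-1}Hg$ being defined over $\QQ$, and hence to $\langle\cdot,\cdot\rangle^g$ being rational. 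Irrationality therefore forces $L = G$, so the orbit $Hg\Gamma$ is dense in $X$.

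The last step transfers density in $X$ to density of image values. Fix a target $(t_{ij})_{i<j}$ in the claimed image, write $t_{ij} = \langle\vw_i,\vw_j\rangle^g$ for some $\vw_i\in\RR^{2n}$, and fix $\varepsilon > 0$. After an arbitrarily small perturbation of the $\vw_i$ keeping the pairings within $\varepsilon$, we may assume $g\vw_1,\ldots,g\vw_k$ are linearly independent and extend them to a basis of $\RR^{2n}$ of determinant $1$, giving a unimodular lattice $\Lambda_0 = g_0\ZZ^{2n}$ containing each $g\vw_i$. Density of $Hg\Gamma$ in $X$ yields $h\in H$ and $\gamma\in\Gamma$ with $hg\gamma$ close to $g_0$, so by continuity of short vectors in convergent lattices (Mahler's criterion) one finds $\vv_i\in\ZZ^{2n}$ with $hg\vv_i$ close to $g\vw_i$. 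By $\Sp$-invariance, $\langle\vv_i,\vv_j\rangle^g = \langle hg\vv_i, hg\vv_j\rangle$, which by continuity is within $O(\varepsilon)$ of $\langle g\vw_i, g\vw_j\rangle = t_{ij}$. The main technical obstacle is the classification step identifying ``closed $H$-orbit'' with ``rational symplectic form'': one must show on the one hand that a rational form furnishes an arithmetic lattice in the stabilizer via Borel--Harish-Chandra, and on the other hand that a closed $H$-orbit gives enough integer points in $g^{-1}Hg$ to force $\QQ$-rationality (by Zariski density of arithmetic lattices in semisimple groups), from which rationality of the invariant symplectic form is recovered since the space of $\Sp$-invariant bilinear forms is one-dimensional.
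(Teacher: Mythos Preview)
Your proposal is correct and follows essentially the same approach as the paper: Ratner's orbit closure theorem applied to $\Sp(2n,\RR)$ acting on $\SL_{2n}(\RR)/\SL_{2n}(\ZZ)$, the maximality of $\Sp(2n,\RR)$ among proper connected subgroups to force the closed/dense dichotomy, and Borel density to identify the closed-orbit case with rationality of the form via the one-dimensionality of $\Sp$-invariant bilinear forms.

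The only differences are cosmetic. For the density-of-values step you run an explicit $\varepsilon$-approximation (perturb the $\vw_i$, extend to a unimodular basis, pull back nearby lattice vectors); the paper instead uses the cleaner chain of inclusions
\[
\overline{\{(\langle \vv_i,\vv_j\rangle^g):\vv_i\in\ZZ^{2n}\}}
\supseteq \bigcup_{h\in \overline{\Sp(2n,\RR)^g\Gamma_{2n}}}\{(\langle h\vv_i,h\vv_j\rangle^g):\vv_i\in\ZZ^{2n}\}
=\{(\langle \vv_i,\vv_j\rangle^g):\vv_i\in\RR^{2n}\},
\]
which avoids the lattice-basis construction entirely. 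For the ``closed orbit $\Rightarrow$ rational'' step you cite Borel--Harish-Chandra and $\QQ$-forms of the symplectic group; the paper argues directly via field automorphisms of $\RR$ over $\QQ$ applied to the coefficients of a suitably scaled form. Both routes are standard. (A small terminological note: what you invoke as ``Mahler's criterion'' is really just continuity of the quotient topology on $G/\Gamma$; Mahler's criterion itself concerns compactness.)
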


The proof proceeds by the canonical argument, namely Ratner's orbit closure theorem~\cite{Ratner1991}.

\begin{theorem}[Ratner's Orbit Closure Theorem]\label{thm: Ratner}
Let $G$ be a connected Lie group and $\Gamma$ a lattice subgroup. Let $H$ be a connected Lie subgroup of $G$ generated by $\mathrm{Ad}$-unipotent one-parameter subgroups. 

For any $x\in G/\Gamma$, there is a Lie subgroup $L\le G$, containing $H$, such that $\overline{H.x}=L.x$ and $L.x$ carries a finite $H$-invariant measure.
\end{theorem}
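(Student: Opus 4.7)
The approach is to deduce the orbit-closure conclusion from Ratner's measure classification theorem, which asserts that every ergodic $H$-invariant probability measure on $G/\Gamma$ is algebraic, i.e.\ the unique $L$-invariant probability measure on a closed orbit $L.x$ for some closed connected subgroup $L$ with $H\subseteq L\subseteq G$ (and $L$ is itself generated by $\mathrm{Ad}$-unipotents). Once this classification is available, the orbit-closure statement follows by analyzing Birkhoff averages along unipotent flows and showing that the limits of those averages are precisely the algebraic measures supported on $\overline{H.x}$.

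First I would reduce to the case where $H=\{u_t\}$ is a one-parameter $\mathrm{Ad}$-unipotent subgroup; the general case follows because the intermediate subgroup $L$ is determined by algebraic constraints together with a Mautner-type phenomenon that assembles the contributions of the commuting one-parameter pieces generating $H$. Next, I would invoke the non-divergence of unipotent orbits, following Dani--Margulis: for every $\epsilon>0$ there is a compact set $K\subseteq G/\Gamma$ such that, for each $x\in G/\Gamma$, the orbit $\{u_t.x\}_{t\in[0,T]}$ spends at least a $(1-\epsilon)$-fraction of its time in $K$ for all sufficiently large $T$ (unless $x$ already lies on a closed orbit of a proper subgroup, in which case the conclusion is immediate). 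This ensures that weak-$*$ limits of the empirical measures
\[
\mu_T=\tfrac{1}{T}\int_0^T \delta_{u_t.x}\,dt
\]
remain probability measures rather than losing mass to infinity.

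The technical heart is the measure classification itself. The key mechanism is the polynomial divergence of nearby unipotent orbits: two points $y,z$ differing by an element transverse to $H$ in the centralizer of $u_t$ drift apart under $u_t$ along explicit polynomial paths. Combining this shearing principle with ergodic decomposition and with entropy arguments (to exclude singular self-similar measures on invariant sets), one shows that any ergodic $u_t$-invariant probability measure $\mu$ must in fact be invariant under a strictly larger connected subgroup; iterating the argument forces $\mu$ to equal the canonical measure on a closed $L$-orbit.

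Finally, I would apply the classification to weak-$*$ limits of $\mu_T$: each limit is algebraic, supported on some $L.x$ with $H\subseteq L$, and linearization techniques (again due to Dani and Margulis) show that $L$ can be chosen uniformly along subsequences and that the support of every such limit coincides with $\overline{H.x}=L.x$, which then carries a finite $H$-invariant measure. The main obstacle is the measure classification step, which requires the delicate interplay of polynomial drift with entropy estimates on the Pinsker $\sigma$-algebra; every other step is comparatively formal once that machinery is available.
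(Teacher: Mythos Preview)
The paper does not prove this theorem at all: it is stated as Ratner's Orbit Closure Theorem and attributed to \cite{Ratner1991}, then used as a black box in the proof of Theorem~\ref{thm: qualitative 2}. So there is no ``paper's own proof'' to compare your proposal against.

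Your sketch is a reasonable high-level outline of the strategy behind Ratner's work (measure classification via polynomial shearing and entropy, non-divergence of unipotent orbits \`a la Dani--Margulis, then linearization to pass from measure classification to orbit closures), but it is only a plan, not a proof: the measure-classification step you flag as ``the technical heart'' is itself a major theorem whose details occupy dozens of pages, and your paragraph does not supply any of the actual arguments (e.g.\ the construction of the additional invariance from transverse divergence, the handling of the return-time partition, or the precise linearization statement needed to upgrade measure rigidity to topological rigidity). For the purposes of this paper that is fine, since the theorem is simply quoted; but as a standalone proof your write-up would need to either reproduce those arguments or cite them with enough precision that a reader could verify each step.
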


Denote $x=g\Gamma$. The theorem further says that $L \cap g\Gamma g^{-1}$ is a lattice subgroup of $L$.
From now on, denote $G_{d}=\SL_{d}(\RR)$ and $\Gamma_{d}=\SL_{d}(\ZZ)$ for $d\in \NN_{\ge 2}$.
To connect the density of $\Sp(2n,\RR)$-orbits in $G_{2n}/\Gamma_{2n}$ with arithmetic properties of corresponding symplectic forms, we need the following theorem.

\begin{theorem}\cite[Borel Density Theorem]{Borel1960}
Let $G$ be a connected semisimple algebraic group over $\RR$ without compact factors.
Then any lattice subgroup $\Gamma\le G$ is Zariski dense in $G$
\end{theorem}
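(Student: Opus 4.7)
The plan is to follow Furstenberg's classical strategy, which converts Zariski density into a question about $G$-invariant probability measures on a projective space. Let $H=\overline{\Gamma}^{\mathrm{Zar}}$ be the Zariski closure of $\Gamma$ in $G$; the goal is to prove $H=G$. First I would realize $H$ as the projective stabilizer of a line in a representation of $G$. Concretely, with $d=\dim H$ and $\mathfrak h$ the Lie algebra of $H$, one may take $V=\bigwedge^d\mathfrak g$ and $v=e_1\wedge\cdots\wedge e_d$ where $e_1,\ldots,e_d$ is a basis of $\mathfrak h$; then $H^\circ$ equals the identity component of $\mathrm{Stab}_G([v])$, and after enlarging $V$ by a standard Chevalley-type construction one obtains a representation in which $H$ itself is precisely the stabilizer of $[v]\in \PP(V)$.

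Next I would transfer the problem to a statement about $G$-invariant measures. Because $\Gamma\subseteq H$ fixes $[v]$, the map
\[
\phi:G/\Gamma\longrightarrow \PP(V),\qquad g\Gamma\longmapsto g\cdot[v],
\]
is well defined and $G$-equivariant. Since $\Gamma$ is a lattice, $G/\Gamma$ carries a finite $G$-invariant measure, which after normalization pushes forward under $\phi$ to a $G$-invariant Borel probability measure $\mu$ on $\PP(V)$; note that $[v]=\phi(e\Gamma)$ lies in the support of $\mu$.

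The main step, and the main obstacle, is Furstenberg's lemma: for $G$ a connected semisimple real Lie group without compact factors and $V$ any finite-dimensional $G$-representation, every $G$-invariant probability measure on $\PP(V)$ is concentrated on the set of $G$-fixed lines $\PP(V)^G$. I would prove this by decomposing $V$ into irreducibles and reducing to the case where $V$ is nontrivial and irreducible, in which case one exhibits a Cartan element $a\in G$ acting with a strict highest-weight contraction on $\PP(V)$: for Lebesgue-almost every line the iterates $a^n\cdot[\,\cdot\,]$ converge to the highest-weight line $[v_+]$, while Poincar\'e recurrence for the $\langle a\rangle$-action on the $G$-invariant probability measure forces $\mu$ to be supported on the proper $G$-invariant projective subvariety where this convergence fails, contradicting irreducibility (or allowing a dimension induction). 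The hypothesis that $G$ has no compact factors enters precisely here, guaranteeing the existence of a suitable $\mathbb R$-diagonalizable, non-elliptic element in $G$; a compact semisimple $G$ would have invariant probability measures on projective spaces, so the assumption cannot be dropped.

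Finally, applying Furstenberg's lemma to $\mu$ shows $[v]\in \PP(V)^G$, so $G\subseteq \mathrm{Stab}_G([v])=H$, and hence $H=G$. Thus $\Gamma$ is Zariski dense in $G$, completing the proof.
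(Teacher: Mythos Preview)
The paper does not prove the Borel Density Theorem; it is quoted with a citation to \cite{Borel1960} and then invoked as a black box in the proof of Theorem~\ref{thm: qualitative 2}. There is therefore no argument in the paper to compare yours against.

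Your outline is Furstenberg's well-known dynamical proof and is essentially correct. One point deserves care: in your sketch of Furstenberg's lemma you assert that Poincar\'e recurrence forces $\mu$ to be supported on ``the proper $G$-invariant projective subvariety where this convergence fails.'' The set on which $a^n\cdot[\,\cdot\,]$ fails to converge to the highest-weight line is $\PP(W^-)$ with $W^-$ the sum of the non-top weight spaces; this is $a$-invariant but in general \emph{not} $G$-invariant, so irreducibility of $V$ does not immediately give a contradiction. The standard repair is either the dimension induction you allude to parenthetically, or the auxiliary observation that the minimal-dimensional linear subspaces carrying positive $\mu$-mass form a finite $G$-set and are hence individually $G$-invariant by connectedness of $G$, which then does contradict irreducibility. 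With that adjustment your argument goes through.
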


\begin{proposition}\label{prop: maximal cnt subgp}
The symplectic group $\Sp(2n,\RR)$ is maximal among proper connected Lie subgroups of $\SL_{2n}(\RR)$ for $n\ge 2$.
\end{proposition}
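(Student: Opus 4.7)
The plan is to pass to the Lie algebra level. By the standard correspondence between connected Lie subgroups of $\SL_{2n}(\RR)$ and Lie subalgebras of $\sl(2n,\RR)$, the proposition is equivalent to the claim that $\Lie{sp}(2n,\RR)$ is a maximal proper Lie subalgebra of $\sl(2n,\RR)$. Suppose that $\Lie{h}$ is a Lie subalgebra with $\Lie{sp}(2n,\RR)\subsetneq \Lie{h}\subseteq \sl(2n,\RR)$; then $[\Lie{sp}(2n,\RR),\Lie{h}]\subseteq \Lie{h}$ shows that $\Lie{h}$ is stable under the adjoint action of $\Lie{sp}(2n,\RR)$ on $\sl(2n,\RR)$. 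Because $\Lie{sp}(2n,\RR)$ is semisimple, this action is completely reducible, and the invariant complement of $\Lie{sp}(2n,\RR)$ admits the explicit description
\[
\sl(2n,\RR)=\Lie{sp}(2n,\RR)\oplus V,\qquad V=\left\{\begin{pmatrix} A & B \\ C & A^T \end{pmatrix}:\,\tr A=0,\ B^T=-B,\ C^T=-C\right\},
\]
characterized invariantly by $V=\{Y\in \sl(2n,\RR):J_n Y\text{ is antisymmetric}\}$, of dimension $2n^2-n-1$.

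The central step is to show that $V$ is irreducible as an $\Lie{sp}(2n,\RR)$-module. Fix the split Cartan subalgebra $\Lie{a}\subseteq \Lie{sp}(2n,\RR)$ of diagonal matrices $\diag(t_1,\ldots,t_n,-t_1,\ldots,-t_n)$, and write $\epsilon_i$ for the coordinate functional $\diag(t_1,\ldots,t_n,-t_1,\ldots,-t_n)\mapsto t_i$. The restricted root system of $\Lie{sp}(2n,\RR)$ with respect to $\Lie{a}$ is of type $C_n$, with roots $\pm\epsilon_i\pm\epsilon_j$ for $i\neq j$ and $\pm 2\epsilon_i$. A direct comparison of the weight decomposition of $\sl(2n,\RR)$ under $\mathrm{ad}\,\Lie{a}$ with that of $\Lie{sp}(2n,\RR)$ shows that $V$ has a one-dimensional weight space for each $\pm\epsilon_i\pm\epsilon_j$ with $i\neq j$, trivial weight space at $\pm 2\epsilon_i$, and an $(n-1)$-dimensional zero weight space sitting inside the Cartan of $\sl(2n,\RR)$. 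From this weight diagram one argues that any nonzero $\Lie{sp}(2n,\RR)$-submodule $W\subseteq V$ must, after successive application of positive root vectors, contain the highest-weight line of weight $\epsilon_1+\epsilon_2$; then iterated application of the negative root vectors of $\Lie{sp}(2n,\RR)$ sweeps across every other weight space of $V$. Equivalently and perhaps more conceptually, $V\otimes_\RR \CC$ is isomorphic as an $\Lie{sp}(2n,\CC)$-module to the primitive part $\Lambda^2_0\CC^{2n}$ of $\Lambda^2\CC^{2n}$, which is the classical irreducible fundamental representation of $\Lie{sp}(2n,\CC)$ attached to the second fundamental weight.

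With irreducibility of $V$ in hand, the proof closes immediately: $\Lie{h}\cap V$ is a nonzero $\Lie{sp}(2n,\RR)$-submodule of $V$, hence equals $V$, and consequently $\Lie{h}\supseteq \Lie{sp}(2n,\RR)\oplus V=\sl(2n,\RR)$. The main obstacle in the plan is the irreducibility verification itself: one has to keep careful track of how each $C_n$ root vector acts on the weight spaces of $V$ in order to show that a single nonzero element generates the whole module. Everything else (semisimplicity, existence of an invariant complement, translating between Lie subalgebras and connected Lie subgroups) is standard.
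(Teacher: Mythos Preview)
Your proposal is correct and follows essentially the same approach as the paper: both reduce to the Lie algebra level, decompose $\sl(2n,\RR)=\Lie{sp}(2n,\RR)\oplus V$ with the same explicit complement, and prove maximality by showing that $V$ is an irreducible $\Lie{sp}(2n,\RR)$-module via its weight decomposition under the diagonal Cartan. The only stylistic difference is that the paper verifies irreducibility by listing explicit bracket computations that connect all weight spaces, whereas you invoke the highest-weight argument and the identification $V\otimes_\RR\CC\cong\Lambda^2_0\CC^{2n}$; these are equivalent ways to carry out the same step.
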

\begin{proof}
The result is a direct consequence of the representation theory of semisimple Lie algebra. For the sake of completeness, we contain the proof in the Appendix.% Theorem~\ref{Thm: Appendix} in the Appendix.
\end{proof}

\begin{proof}[Proof of Theorem~\ref{thm: qualitative 2}]
Let $\langle \cdot, \cdot \rangle^g$, $g\in G_{2n}=\SL_{2n}(\RR)$ be a symplectic form. The conjugate $\Sp(2n,\RR)^g=g^{-1}\Sp(2n,\RR)g$ of the symplectic group is the subgroup of $G_{2n}$ preserving the given symplectic form $\langle \cdot, \cdot \rangle^g$.

Since $\Sp(2n,\RR)^g$ is generated by unipotents, it follows from Ratner's orbit closure theorem and Proposition~\ref{prop: maximal cnt subgp} that the orbit closure $\overline{\Sp(2n,\RR)^g.\Gamma_{2n}/\Gamma_{2n}}$ in $G_{2n}/\Gamma_{2n}$ is either
\[
\Sp(2n,\RR)^g.\Gamma_{2n}/\Gamma_{2n}
\;\text{or}\; G_{2n}/\Gamma_{2n}.
\]
We remark that if the former case holds, $\Sp(2n,\RR)^g\cap \Gamma_{2n}$ is a lattice subgroup of $\Sp(2n,\RR)^g$, and if the latter case holds,
\[\begin{split}
\overline{\left\{\left(\langle\vv_i,\vv_j\rangle^g\right)_{1\le i<j\le k}: \vv_1, \ldots, \vv_k\in \ZZ^{2n}\right\}}
&=\overline{\bigcup_{h\in \Sp(2n,\RR)^g\:\Gamma_{2n}}\left\{\left(\langle h\vv_i,h\vv_j\rangle^g\right)_{1\le i<j\le k}: \vv_1, \ldots, \vv_k\in \ZZ^{2n}\right\}}\\
&\supseteq \bigcup_{h\in \overline{\Sp(2n,\RR)^g\:\Gamma_{2n}}}\left\{\left(\langle h\vv_i,h\vv_j\rangle^g\right)_{1\le i<j\le k}: \vv_1, \ldots, \vv_k\in \ZZ^{2n}\right\}\\
&\supseteq \bigcup_{h\in G_{2n}}\left\{\left(\langle h\vv_i,h\vv_j\rangle^g\right)_{1\le i<j\le k}: \vv_1, \ldots, \vv_k\in \ZZ^{2n}\right\}\\
&=\left\{\left(\langle \vv_i,\vv_j\rangle^g\right)_{1\le i<j\le k}: \vv_1, \ldots, \vv_k\in \RR^{2n}\right\}.
\end{split}\]

Hence it suffices to show that
\[
\langle \cdot, \cdot\rangle^g\text{ is a rational symplectic form}
\;\Leftrightarrow\;
\overline{\Sp(2n,\RR)^g.\Gamma_{2n}/\Gamma_{2n}}=\Sp(2n,\RR)^g\Gamma_{2n}/\Gamma_{2n}.
\]

Suppose that $\langle \cdot, \cdot\rangle$ is a rational form, i.e., a scalar multiple of some symplectic form with integer coefficients. Clearly, $\{(\langle \vv_i,\vv_j\rangle^g)_{1\le i<j\le k}: \vv_1, \ldots, \vv_k\in \ZZ^{2n}\}$ is a discrete set in $\RR^{\frac 1 2 k (k-1)}$. It follows from the observation above that the orbit $\Sp(2n,\RR)^g.\Gamma_{2n}/\Gamma_{2n}$ is closed in $G_{2n}/\Gamma_{2n}$.

%%%%%%%
For the reverse direction, we will use the fact that $\Sp(2n,\RR)^g\cap \Gamma_{2n}$ is a lattice subgroup of $\Sp(2n,\RR)^g$.
From the Borel density theorem, since $\Sp(2n,\RR)^g$ is a connected semisimple algebraic group without compact factor, $\Sp(2n,\RR)^g\cap \Gamma_{2n}$ is a Zariski dense subset, and hence $\Sp(2n,\RR)^g$ is defined over $\QQ$.

Observe that if two symplectic forms $\langle \cdot, \cdot \rangle_1$ and $\langle \cdot, \cdot\rangle_2$ have the common symplectic group, they only differ by a scalar multiplication. 
% it holds that
% \begin{equation}\label{observation: uniqueness}
% \Sp(B_1)=\Sp(B_2)
% \;\Rightarrow\;
% B_2=cB_1\;\text{for some}\; c\in \RR-\{0\}.
% \end{equation}
Indeed, let $\langle \vv_1,\vv_2\rangle_1=c_1\langle \vv_1, \vv_2\rangle^{g_1}$ and $\langle \vv_1, \vv_2\rangle_2=c_2\langle \vv_1, \vv_2\rangle^{g_2}$. It follows that $$(g_1^{-1}g_2)^{-1}\Sp(2n,\RR)(g_1^{-1}g_2)=\Sp(2n,\RR),$$ i.e., $g_1^{-1}g_2\in C_{G_{2n}}(\Sp(2n,\RR))=\{\pm I_{2n}\}$. Thus $\langle \cdot, \cdot\rangle_2=\frac {c_2}{c_1}\langle \cdot, \cdot \rangle_1$.

Now, let us take a scalar $c\in \RR$ so that $c\langle \cdot, \cdot \rangle^g$ has at least one rational coefficient.

Consider an automorphism $\phi$ of $\RR/\QQ$ and use the same notation $\phi$ for the extension maps on the space of symplectic forms and $\Mat_{2n}(\RR)$. It is easy to show that
\[
\Sp(\phi(c\langle \cdot, \cdot\rangle^g))=\phi\Sp(c\langle \cdot, \cdot\rangle^g)=\Sp(c\langle \cdot, \cdot\rangle^g),
\]
where $\Sp(c\langle \cdot, \cdot \rangle^g)=\Sp(\langle \cdot, \cdot \rangle^g)=\Sp(2n,\RR)^g$ and $\phi(c\langle \cdot, \cdot\rangle^g)$ is the symplectic form obtained by applying $\phi$ to coefficients of the form $c\langle \cdot, \cdot \rangle^g$.
%we use the assumption that $\Sp(B)$ is defined over $\QQ$ for the second equality. 
%By \eqref{observation: uniqueness} 
Since the form $c\langle \cdot, \cdot \rangle^g$ contains a rational coefficient, it follows that $\phi (c\langle \cdot, \cdot\rangle^g)=c\langle \cdot, \cdot \rangle^g$.

Since $\phi$ is arbitrary and one can take an automorphism of $\RR/\QQ$ sending one irrational number to another, we conclude that $\langle \cdot, \cdot \rangle^g$ is a rational form. 
\end{proof}

%%%%%%%%%%%%%%%%%%%%%%%%%%%%%%%%%%%%%%%%%%%%%%%%%%%
\begin{proof}[Proof of Theorem~\ref{thm: qualitative} assuming Theorem~\ref{thm: qualitative 2}.]
We need to show that if $k=2,3$ for $n=2$ and $2\le k\le n+2$, the set
\[
\left\{\left(\langle \vv_i,\vv_j\rangle^g\right)_{1\le i<j\le k}: \vv_1, \ldots, \vv_k\in \RR^{2n}\right\}\subseteq \RR^{\frac 1 2 k(k-1)}
\]
contains the dense subset of $\RR^{\frac 1 2 k (k-1)}$. We may assume that $g=I_{2n}$. Precisely, we will show that for any $(\xi_{ij})_{1\le i<j\le k}$, where $\xi_{ij}\neq 0$ for all $1\le i<j\le k$, one can find $\vv_1, \ldots, \vv_k\in \RR^{2n}$ such that $\langle \vv_i, \vv_j\rangle =\xi_{ij}$ for all $1\le i<j\le k$. 

Suppose that such a tuple $(\xi_{ij})_{1\le i<j\le k}$ is given. Choose any nonzero $\vv_1\in \RR^{2n}$. We want to find $\vv_2\in \RR^{2n}$ such that $\langle \vv_1, \vv_2\rangle=\xi_{12}$, and $\vv_1$ and $\vv_2$ are linearly independent.
Let $L_1:\RR^{2n}\rightarrow \RR$ be a linear map given as
\[
L_1(\vx)={\tp{\vv_1}}J_n\vx,\;\vx\in \RR^{2n},
\]
where $J_n$ is the standard skew-symmetric matrix given as in \eqref{J_n}.
Since $\vv_1\neq 0$, it follows that 
\[\rk(\im L_1)=1
\quad\text{and}\quad 
\dim\ker L_1=2n-1.\] 
Take any vector $\vv'_2\in \RR^{2n}$ such that $\langle\vv_1, \vv'_2\rangle=\xi_{12}$. For any $\vv\in \ker L_1 +\vv'_2$, $\langle \vv_1, \vv_2 \rangle=\xi_{12}$. Choose $\vv_2\in \ker L_1+\vv'_2$ such that $\vv_1$ and $\vv_2$ are linearly independent. 

We claim that one can find linearly independent $\vv_1, \ldots, \vv_\ell\in \RR^{2n}$, if $\ell\le \min(n+1, k)$, such that $\langle \vv_i, \vv_j\rangle=\xi_{ij}$ for all $1\le i<j\le \ell$. 
Assume that there are linearly independent $\vv_1, \ldots, \vv_{\ell'}\in \RR^{2n}$ such that $\langle\vv_i, \vv_j\rangle=\xi_{ij}$ for all $1\le i<j\le \ell'$ ($\ell'<n$). Take a linear map $L_{\ell'}:\RR^{2n}\rightarrow \RR^{\ell'}$ by
\begin{equation}\label{eqn: linear map}
L_{\ell'}(\vx)=\left(\begin{array}{c}
{\tp{\vv_1}}\\
\vdots\\
{\tp{\vv_{\ell'}}}\end{array}\right) J_n \vx. 
\end{equation}
Note that $\rk L_{\ell'}=\ell'$ so that $\im L_{\ell'}=\RR^{\ell'}$ and $\dim\ker L_{\ell'}=2n-\ell'$. Fix any $\vv'_{\ell'+1}\in \RR^{2n}$ such that $\langle\vv_i, \vv'_{\ell'+1}\rangle=\xi_{i, \ell'+1}$ for all $1\le i \le \ell'$. Then any $\vv_{\ell'+1}\in \ker L_{\ell'}+\vv'_{\ell'+1}$ satisfies the same property that $\langle \vv_i, \vv_{\ell'+1}\rangle=\xi_{i,\ell'+1}$ for all $1\le i\le \ell'$ and since $\vv'_{\ell'+1}\neq 0$ (from the assumption that $\xi_{i,\ell+1}\neq 0$), there is no subspace of dimension $2n-\ell'$ containing an affine subspace $\ker L_{\ell'}+\vv'_{\ell'+1}$.

To find $\vv_{\ell'+1}\in \ker L_{\ell'}+\vv'_{\ell'+1}$ for which $\vv_1, \ldots, \vv_{\ell'}, \vv_{\ell'+1}$ are linearly independent, we need that
\[
\ker L_{\ell'}+\vv'_{\ell'+1} \not\subseteq \text{$\RR$-span of}\;\vv_1, \ldots, \vv_{\ell'}
\]
which follows from the fact that $2n-\ell'\le \ell'$.

Until now, we have obtained the theorem for $2\le k\le n+1$ with an extra property that $\vv_1, \ldots, \vv_k$ are linearly independent. When $n\ge 3$ and $k=n+2$, the linear map $L_{n+1}$ defined as in \eqref{eqn: linear map} is onto, hence one can find $\vv_{n+2}$ (with above notation, $\vv'_{n+2}$) for which  $\langle\vv_i, \vv_j\rangle=\xi_{ij}$ for all $1\le i<j\le n+2$. 
\end{proof}

\section{Volume Formula}\label{Sec:Volume Formula}

Define \emph{the rank-$k$ cone} $\mathcal C_{g,k}$ of the symplectic form $\langle \cdot, \cdot\rangle^g$ in $(\RR^{2n})^k$, where $1\le k \le 2n$, by
\[
\mathcal C_{g,k}
=\left\{(\vv_1, \ldots, \vv_k)\in \RR^{2n}\times \RR^{2n}: \langle\vv_i, \vv_j\rangle^g=0, 1\le \forall i, j \le k \right\}.
\]

\begin{proposition}\label{prop: volume formula}
Let $\langle \cdot, \cdot\rangle$ be the standard symplectic form on $\RR^{2n}$ given as in \eqref{J_n}, where $n\ge 2$, and let $1\le k\le 2n-2$. 

Let $\left\{(a_{ij},b_{ij})\subseteq \RR\right\}_{1\le i<j\le k}$ be a collection of bounded intervals such that for each $t_{ij}\in (a_{ij}, b_{ij})$, there is $(\vv_1, \ldots, \vv_k)\in (\RR^{2n})^k$ such that $\langle \vv_i, \vv_j\rangle=t_{ij}$.
Let $h_\ell$ be a smooth function on $\RR^{2n}$ for $1\le \ell \le k$ with $\supp h_\ell\subseteq B_R(\origin)$ for some $R>0$, where $B_R(\origin)$ is the ball of radius $R$ centered at the origin. 

Set $N>0$ such that all $(a_{ij}, b_{ij})\subseteq [-N,N]$.
Denote by $\chi^{}_{ij}$ the characteristic function of $(a_{ij}, b_{ij})$ for $1\le i<j\le k$.
It follows that
\[\begin{split}
&\int_{\RR^{2n}}\cdots \int_{\RR^{2n}}
\prod_{\ell=1}^k h_\ell \left(\frac {\vv_\ell} {T}\right) \prod_{1\le i<j\le k}\chi^{}_{ij}(\langle \vv_i, \vv_j\rangle) d\vv_k \cdots d\vv_1\\
&\hspace{1.4in}=J(h_1, \ldots, h_k) \prod_{1\le i<j\le k} (b_{ij}-a_{ij})\cdot T^{2nk - k(k-1)}\\
&\hspace{1.6in}+O\left(\mathcal S_0 T^{2nk-k(k-1)-(2n-k-1)}\right)+ \sum_{t=1}^{k-1} O\left(\mathcal S_t T^{2nk-k(k-1)-2t}\right),
\end{split}\]
where 
\[
J(h_1, \ldots, h_k)
=\int_{\mathcal C_{I,k}} \prod_{\ell=1}^k h_{\ell} (\vw_\ell) \prod_{\ell=1}^{k-1} \|\vw_\ell\|^{k-\ell} d\vw_k \cdots d\vw_1.
\]

Here, $\mathcal S_0=\prod_{\ell=1}^k \|h_\ell\|_\infty$ and 
\[
\mathcal S_t:=\max\left\{\prod_{j\in I^c} \|h_j\|_\infty \prod_{i\in I} \mathcal S_1(h_i): I\subseteq \{1, \ldots, k\},\;|I|=t \right\},\quad 1\le t\le k-1,
\]
where $\mathcal S_1(h)=\max\{\|\partial h/\partial x_i\|_\infty: i=1, \ldots, 2n\}$ for a smooth function $h$ on $\RR^{2n}$.

The implicit constants of error terms can be taken continuously on $R$ and $N>0$.
\end{proposition}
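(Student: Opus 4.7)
I would rescale $\vv_\ell=T\vw_\ell$ and then iteratively apply the coarea formula to peel off the $k(k-1)/2$ values $s_{ij}=\langle\vw_i,\vw_j\rangle$ as new coordinates. After this substitution the interval conditions become $\langle\vw_i,\vw_j\rangle\in(a_{ij}/T^2,b_{ij}/T^2)$ and an overall factor $T^{2nk}$ appears. For each $\ell=2,\ldots,k$, with $\vw_1,\ldots,\vw_{\ell-1}$ held in general position, I would change variables on $\vw_\ell$ via the linear map
\[
\Phi_\ell\colon\RR^{2n}\longrightarrow\RR^{\ell-1},\qquad \Phi_\ell(\vx)=(\langle\vw_i,\vx\rangle)_{i<\ell}.
\]
Since $\langle\vw_i,\vx\rangle=(-J_n\vw_i)^{\top}\vx$ and $J_n$ is a Euclidean isometry, $D\Phi_\ell(D\Phi_\ell)^{\top}=(\vw_i\cdot\vw_j)_{i,j<\ell}=:G_{\ell-1}$, so the coarea Jacobian is $\sqrt{\det G_{\ell-1}}$ and each fiber $\Phi_\ell^{-1}(\mathbf{s}_\ell)$ is an affine subspace of dimension $2n-\ell+1$ parallel to the symplectic annihilator $\spn\{\vw_1,\ldots,\vw_{\ell-1}\}^{\omega}$.

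Integrating the transverse variables $\mathbf{s}_\ell=(s_{i\ell})_{i<\ell}$ over $\prod_{i<\ell}(a_{i\ell}/T^2,b_{i\ell}/T^2)$ produces $\prod_{i<\ell}(b_{i\ell}-a_{i\ell})T^{-2(\ell-1)}$; taking the product over $\ell=2,\ldots,k$ gives the factor $\prod(b_{ij}-a_{ij})T^{-k(k-1)}$, which combines with $T^{2nk}$ to recover the claimed scaling $T^{2nk-k(k-1)}$. For the leading contribution I would replace each $h_\ell(\vw_\ell)$ by its value at $\mathbf{s}_\ell=0$, which places $(\vw_1,\ldots,\vw_k)$ on the cone $\mathcal C_{I,k}$; assembling the Jacobians $\prod_{\ell=2}^{k}(\det G_{\ell-1})^{-1/2}$ and re-expressing them through the chosen parametrization of the cone identifies the resulting integral with $J(h_1,\ldots,h_k)$, the weight $\prod_{\ell=1}^{k-1}\|\vw_\ell\|^{k-\ell}$ in the statement encoding this bookkeeping.

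For the error I would distinguish two regimes. In the nondegenerate regime where $\det G_{\ell-1}$ is bounded below, a first-order Taylor expansion of $h_\ell(\vw_\ell)$ about $\vw_\ell|_{\mathbf{s}_\ell=0}$ has remainder $\lesssim\mathcal S_1(h_\ell)\cdot|\mathbf{s}_\ell|/\sqrt{\det G_{\ell-1}}=O(\mathcal S_1(h_\ell)T^{-2})$. Applying this expansion to $t$ of the $h$-factors and bounding the remaining $k-t$ factors by $\|h_j\|_\infty$ yields exactly the errors $O(\mathcal S_t\,T^{2nk-k(k-1)-2t})$ for $t=1,\ldots,k-1$. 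The remaining term $O(\mathcal S_0\,T^{2nk-k(k-1)-(2n-k-1)})$ arises from a dyadic decomposition around the near-degenerate locus $\{\det G_{\ell-1}\to 0\}$, where the linearization is no longer effective; the exponent $2n-k-1$ emerges from balancing the trivial volume bound on this locus (whose codimension in the configuration space is governed by the rank hypothesis $k\le 2n-2$) against the Jacobian blow-up.

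The main obstacle I anticipate is twofold. First, one must cleanly match the product of Gram-determinant Jacobians $\prod(\det G_{\ell-1})^{-1/2}$ with the specific weight $\prod\|\vw_\ell\|^{k-\ell}$ appearing in $J$; this requires a deliberate choice of the measure $d\vw_k\cdots d\vw_1$ on $\mathcal C_{I,k}$ compatible with the iterated symplectic-orthogonal decomposition. Second, controlling the near-degenerate contribution with the precise exponent $2n-k-1$ is delicate and directly determines the continuous dependence of the implicit constants on $R$ and $N$.
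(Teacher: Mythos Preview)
Your outline is correct and follows the same strategy as the paper: rescale $\vv_\ell=T\vw_\ell$, split each $\vw_\ell$ into a component on the symplectic annihilator of the previous vectors and a transverse component carrying the values $s_{ij}$, Taylor-expand $h_\ell$ in the transverse direction, and treat the near-degenerate locus separately. The two executions differ only in how these steps are made concrete.

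For the Jacobian, the paper does not invoke the coarea formula with the full map $\Phi_\ell$ and its Gram determinant. Instead it iterates one direction at a time: given $\vv_1$ it sets $\bs\xi_{12}=-J_n\vv_1/\|\vv_1\|^2$, decomposes $\vv_2=t_{12}\bs\xi_{12}+\vw_{12}$ with $\vw_{12}\in W_{12}:=\ker\langle\vv_1,\cdot\rangle$, and then repeats for $\vv_3$ using first $\vv_1$ and then the \emph{projected} vector $\vw_{12}$ (not $\vv_2$) to define $\bs\xi_{23}$. Each single step contributes a factor $\|\cdot\|^{\pm 1}$ rather than a Gram determinant, and the product gives exactly the weight $\prod_{\ell=1}^{k-1}\|\vw_\ell\|^{k-\ell}$ with respect to the iterated Lebesgue measure $d\vw_k\cdots d\vw_1$ on the cone. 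This resolves the first obstacle you flag: the stated weight is not a rewriting of $\prod(\det G_{\ell-1})^{-1/2}$ relative to Hausdorff measure, but the Jacobian of this particular sequential parametrization, so no matching argument is needed.

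For the near-degenerate contribution, the paper replaces your dyadic scheme by something more elementary: at each stage it simply excises a ball $B_{1/T}(\mathbf 0)$ from the current subspace ($\RR^{2n}$, then $W_{12}$, etc.) before decomposing, and bounds the removed piece by the crude estimate $\mathcal S_0\times(\text{volume})$ after performing the same change of variables on it. Summing these excisions yields the term $O(\mathcal S_0\,T^{2nk-k(k-1)-(2n-k-1)})$ directly, with the exponent coming from the volume of the smallest excised ball. This is simpler to make precise than a dyadic balance and gives the continuous dependence on $R$ and $N$ for free.
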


\begin{proof}
For clarity, we present the proof in the case $k=3$. The general case follows by the same method, although the notation and steps become considerably heavier (the case $k=2$ is straightforward).

By the change of variables $\vv_\ell$ to $T\vv_\ell$, $1\le \ell \le 3$, what we want to estimate is the integral
\begin{equation}\label{main volume 1}
\int_{\RR^{2n}}\int_{\RR^{2n}}\int_{\RR^{2n}}
\prod_{\ell=1}^3 h_\ell(\vv_\ell)
\prod_{1\le i<j\le 3} \chi^{}_{ij}(T^2\langle \vv_i, \vv_j\rangle)
(T^{2n})^3 d\vv_3 d\vv_2 d\vv_1.
\end{equation}

% step 1 %%%%%%%%%%%%%%%%%%%%%%%%%%%%%%%%%%%%%%%%%%%%%%
\noindent {\bf Step 1.}\quad 
Set the first error term (bound) by cutting out a small ball centered at the origin in the first $\RR^{2n}$. 
\[
\mathcal E_1:=
\int_{B_{\frac 1 T}(\origin)}\int_{\RR^{2n}}\int_{\RR^{2n}}
\mathcal S_0 \prod_{1\le i<j\le 3} \chi^{}_{ij} (T^2 \langle \vv_i, \vv_j\rangle) (T^{2n})^3 d\vv_3d\vv_2d\vv_1
\]
so that 
\[
\eqref{main volume 1}=\int_{\RR^{2n}\setminus B_{\frac 1 T}(\origin)} \int_{\RR^{2n}}\int_{\RR^{2n}}
\prod_{\ell=1}^3 h_{\ell}(\vv_\ell) \prod_{1\le i<j\le 3} \chi^{}_{ij} (T^2 \langle \vv_i, \vv_j\rangle) (T^{2n})^3 d\vv_3 d\vv_2 d\vv_1
+\mathcal E_1.
\]

\vspace{0.1in}
% step 2 %%%%%%%%%%%%%%%%%%%%%%%%%%%%%%%%%%%%%%%%%%%%%%
\noindent {\bf Step 2.}\quad
For each $\vv_1\neq \mathbf 0$, define
\[\begin{gathered}
\boldsymbol{\xi}_{12}=\boldsymbol{\xi}_{13}=\frac {-J_n\vv_1} {\|\vv_1\|^2} \in \RR^{2n},\\
W_{12}=W_{13}=\ker \left[\langle\vv_1, \cdot\rangle:\RR^{2n}\rightarrow \RR\right] \subseteq \RR^{2n},
\end{gathered}\]
where $J_n$ is as in \eqref{J_n}.

Note that $\bs{\xi}_{1\ell}$ and $W_{1\ell}$ ($\ell=2,3$), respectively, are regarded as continuous maps $\bs{\xi}_{1\ell}(\vv_1)$ and  $W_{1\ell}(\vv_1)$ from $\RR^{2n}\setminus \{\origin\}$ to $\RR^{2n}$ and $\Gr_{2n-1}(\RR^{2n})$, respectively, where $\Gr_t(\RR^{2n})$ is a Grassmannian of $t$-dimentional subspaces of $\RR^{2n}$. 

One can decompose $\vv_\ell= t_{1\ell}\bs{\xi}_{1\ell}+\vw_{1\ell}$, where $\vw_{1\ell}\in W_{1\ell}$ ($\ell=2,3$). It is not difficult to check that $d\vv_\ell=\|\vv_1\|d\vw_{1\ell} d\bs{\xi}_{1\ell}$ so that
\[
d\vv_3d\vv_2d\vv_1=\|\vv_1\|^2 (d\vw_{13} d\bs{\xi}_{13}) (d\vw_{12} d\bs{\xi}_{12}) d\vv_1.
\]
Hence
\[\begin{split}
\eqref{main volume 1}&=
\int_{\RR^{2n}\setminus B_{\frac 1 T}(\origin)} \int_{\RR.\bs{\xi}_{12}} \int_{W_{12}}  \int_{\RR.\bs{\xi}_{13}} \int_{W_{12}}h_1(\vv_1) \prod_{\ell=2}^3 h_\ell (t_{1\ell}\bs{\xi}_{1\ell} +\vw_{1\ell})\prod_{\ell=2}^3 \chi^{}_{1\ell} (T^2t_{1\ell})\\
&\hspace{0.5in} 
\times \chi^{}_{23} \left(T^2 \langle t_{12}\bs{\xi}_{12}+\vw_{12}, t_{13}\bs{\xi}_{13}+\vw_{13}\rangle\right)
\|\vv_1\|^2 (T^{2n})^3 (d\vw_{13} d\bs{\xi}_{13}) (d\vw_{12} d\bs{\xi}_{12}) d\vv_1+\mathcal E_1.
\end{split}\]

\vspace{0.1in}
% Step 3 %%%%%%%%%%%%%%%%%%%%%%%%%%%%%%%%%%%%%%%%%%%%%
\noindent {\bf Step 3.}\quad
The range of $t_{1\ell}$ ($\ell=2,3$) is
\[
t_{1\ell}\in \left[\frac {a_{1\ell}} {T^2}, \frac {b_{1\ell}} {T^2}\right] \subseteq \left[ -\frac {N} {T^2}, \frac {N} {T^2} \right].
\]
Hence there is $T_0>0$, depending continuously on $N$ (and $h_\ell$), such that for $T>T_0$, 
\[\begin{gathered}
\|t_{1\ell}\bs{\xi}_{1\ell}\|\le N/(\|\vv_1\|T^2)\le N/T \ll1\;\text{so that}\\
h_\ell (t_{1\ell}\bs{\xi}_{1\ell}+\vw_{1\ell})=h_{\ell} (\vw_{1\ell})+ O\left(\frac {\mathcal S_1(h_\ell)} {\|\vv_1\|T^2} \right),\quad \ell=1,2.
\end{gathered}\]
Define
\[\begin{split}
\mathcal E_2&=2\int_{\RR^{2n}\setminus B_{\frac 1 T}(\origin)}\int_{\RR.\bs{\xi}_{12}}\int_{W_{12}}
\int_{\RR.\bs{\xi}_{13}}\int_{W_{13}} \mathcal S_1\prod_{\ell=2}^3 \chi^{}_{1\ell} (T^2t_{1\ell})\\
&\hspace{0.5in} 
\times \chi^{}_{23} \left(T^2 \langle t_{12}\bs{\xi}_{12}+\vw_{12}, t_{13}\bs{\xi}_{13}+\vw_{13}\rangle\right)
\|\vv_1\| T^{6n-2}(d\vw_{13} d\bs{\xi}_{13}) (d\vw_{12} d\bs{\xi}_{12}) d\vv_1\\
&+\int_{\RR^{2n}\setminus B_{\frac 1 T}(\origin)}\int_{\RR.\bs{\xi}_{12}}\int_{W_{12}}
\int_{\RR.\bs{\xi}_{13}}\int_{W_{13}} \mathcal S_2\prod_{\ell=2}^3 \chi^{}_{1\ell} (T^2t_{1\ell})\\
&\hspace{0.5in} 
\times \chi^{}_{23} \left(T^2 \langle t_{12}\bs{\xi}_{12}+\vw_{12}, t_{13}\bs{\xi}_{13}+\vw_{13}\rangle \right)
 T^{6n-4}(d\vw_{13} d\bs{\xi}_{13}) (d\vw_{12} d\bs{\xi}_{12}) d\vv_1.
\end{split}\]
It follows that for $T>T_0$, 
\[\begin{split}
\eqref{main volume 1}&=
\int_{\RR^{2n}\setminus B_{\frac 1 T}(\origin)} \int_{\RR.\bs{\xi}_{12}} \int_{W_{12}}  \int_{\RR.\bs{\xi}_{13}} \int_{W_{12}}h_1(\vv_1) \prod_{\ell=2}^3 h_\ell (\vw_{1\ell})\prod_{\ell=2}^3 \chi^{}_{1\ell} (T^2t_{1\ell})\\
&\hspace{0.2in} 
\times \chi^{}_{23} \left(T^2 \langle t_{12}\bs{\xi}_{12}+\vw_{12}, t_{13}\bs{\xi}_{13}+\vw_{13}\rangle \right)
\|\vv_1\|^2 (T^{2n})^3 (d\vw_{13} d\bs{\xi}_{13}) (d\vw_{12} d\bs{\xi}_{12}) d\vv_1+\sum_{\ell=1}^2\mathcal E_\ell.
\end{split}\]

\vspace{0.1in}
% Step 4 %%%%%%%%%%%%%%%%%%%%%%%%%%%%%%%%%%%%%%%%%%%%%%
\noindent {\bf Step 4.} \quad
This step is similar to {\bf Step 1}, but this time we cut out a small ball in the $(2n-1)$-dimensional subspace $W_{12}=W_{12}(\vv_1)$ for each $\vv_1$. Set
\[\begin{split}
&\mathcal E_3=\int_{\RR^{2n}} \int_{\RR.\bs{\xi}_{12}} \int_{W_{12}\cap B_{\frac 1 T}(\origin)}
\int_{\RR.\bs{\xi}_{13}} \int_{W_{13}} \mathcal S_0 \prod_{\ell=2}^3 \chi^{}_{1\ell} (T^2t_{1\ell})\\
&\hspace{0.5in} \times \chi^{}_{23}\left(T^2\langle t_{12}\bs{\xi}_{12}+\vw_{12}, t_{13}\bs{\xi}_{13}+\vw_{13}\rangle \right)\|\vv_1\|^2 (T^{2n})^3 (d\vw_{12}d\bs{\xi}_{13}) (d\vw_{12} d\bs{\xi}_{12}) d\vv_1
\end{split}\]
so that for $T>T_0$,
\[\begin{split}
\eqref{main volume 1}&=
\int_{\RR^{2n}\setminus B_{\frac 1 T}(\origin)} \int_{\RR.\bs{\xi}_{12}} \int_{W_{12}\setminus B_{\frac 1 T}(\origin)}  \int_{\RR.\bs{\xi}_{13}} \int_{W_{12}}h_1(\vv_1) \prod_{\ell=2}^3 h_\ell (\vw_{1\ell})\prod_{\ell=2}^3 \chi^{}_{1\ell} (T^2t_{1\ell})\\
&\hspace{0.2in} 
\times \chi^{}_{23} \left(T^2 \langle t_{12}\bs{\xi}_{12}+\vw_{12}, t_{13}\bs{\xi}_{13}+\vw_{13}\rangle \right)
\|\vv_1\|^2 (T^{2n})^3 (d\vw_{13} d\bs{\xi}_{13}) (d\vw_{12} d\bs{\xi}_{12}) d\vv_1+\sum_{\ell=1}^3\mathcal E_\ell.
\end{split}\]

\vspace{0.1in}
% Step 5 %%%%%%%%%%%%%%%%%%%%%%%%%%%%%%%%%%%%%%%%%%%%%%
\noindent {\bf Step 5.} \quad
We may assume that $(\vv_1, \vw_1)$ on the domain of the integral above is linearly independent. 
Define
\[
\bs{\xi}_{23}:=\frac {-B_0\vw_{12}}{\|\vw_{12}\|^2} \in W_{13}
\quad\text{and}\quad
W_{23}:=\ker \langle \vv_1, \cdot\rangle \cap \ker \langle\vw_{12}, \cdot\rangle \subseteq W_{13}.
\]
As before, $\bs{\xi}_{23}$ and $W_{23}$ are continuous maps $\bs{\xi}_{23}(\vv_1, \vw_1)$ and $W_{23}(\vv_1, \vw_1)$ from $$\{(\vv_1, \vw_{12})\in \RR^{2n}\times \RR^{2n}: \langle \vv_1, \vw_{12}\rangle=0\;\text{and}\; \vv_1, \vw_2\text{ are linearly independent}\}$$ to $\RR^{2n}$ and $\Gr_{2n-2}(\RR^{2n})$, respectively. 
It follows that any element $ \vw_{13}\in W_{13}$ can be decomposed as $\vw_{13}=t_{23}\bs{\xi}_{23}+\vw_{23}$, where $\vw_{23}\in W_{23}$ and $d\vw_{13}=\|\vw_{12}\|d\vw_{23}d\bs{\xi}_{23}$.
Using these coordinates, one can express \eqref{main volume 1} as follows.
\[\begin{split}
\eqref{main volume 1}&=
\int_{\RR^{2n}\setminus B_{\frac 1 T}(\origin)} \int_{\RR.\bs{\xi}_{12}} \int_{W_{12}\setminus B_{\frac 1 T}(\origin)}  \int_{\RR.\bs{\xi}_{13}} \int_{\RR.\bs{\xi}_{23}}\int_{W_{23}}h_1(\vv_1) h_2(\vw_{12}) h_3 (t_{23}\bs{\xi}_{23}+\vw_{23})\\
&\hspace{1in} 
\times \prod_{\ell=2}^3 \chi^{}_{1\ell} (T^2t_{1\ell})\chi^{}_{23} \left(T^2 \langle t_{12}\bs{\xi}_{12}+\vw_{12}, t_{13}\bs{\xi}_{13}+t_{23}\bs{\xi}_{23}+\vw_{23}\rangle \right)
 \\
&\hspace{2.1in}\times\|\vv_1\|^2\|\vw_{12}\| (T^{2n})^3(d\vw_{23}d\bs{\xi}_{23} d\bs{\xi}_{13}) (d\vw_{12} d\bs{\xi}_{12}) d\vv_1+\sum_{\ell=1}^3\mathcal E_\ell.
\end{split}\]

\vspace{0.1in}
% Step 6 %%%%%%%%%%%%%%%%%%%%%%%%%%%%%%%%%%%%%%%%%%%%%%
\noindent {\bf Step 6.} \quad
We claim that there is $T_1\gg 1$ such that if $T>T_1$, it holds that
\begin{equation}\label{eq 1: prop volume formula}
h_3(t_{23}\bs{\xi}_{23}+\vw_{23})= h_3(\vw_{23}) + O\left( \frac {\mathcal S_1(h_3)} {\|\vw_{12}\|T^2}\right).
\end{equation}
To see this, as in {\bf Step 3}, we need to investigate the range of $t_{23}$.
Since $J_n$ is skew-symmetric and $\langle J_n\vx_1, J_n\vx_2\rangle=-\langle \vx_1, \vx_2 \rangle$, $\forall \vx_1, \vx_2\in \RR^{2n}$, it follows that
\[
\langle t_{12}\bs{\xi}_{12}+\vw_{12}, t_{13}\bs{\xi}_{13}+t_{23}\bs{\xi}_{23}+\vw_{23}\rangle
=t_{23} + t_{12}\langle \bs{\xi}_{12}, \vw_{23}\rangle+t_{13}\langle \vw_{12}, \bs{\xi}_{23}\rangle.
\]
Let $b=\max\{\langle \vx_1, \vx_2\rangle: \vx_1, \vx_2\in B_R(\origin)\}$. Since $t_{12}$ and $t_{23}$ are between $[-N/T^2, N/T^2]$, and $\supp\chi^{}_{23}$ is also contained in $[-N/T^2, N/T^2]$, it holds that
\[
t_{23} \in \left[ -\frac {(2b+1)N} {T^2}, \frac {(2b+1)N} {T^2}\right].
\]
In particular, there is $T_1\ge T_0$, depending continuously on $N$ and $R$,  so that \eqref{eq 1: prop volume formula} holds.
Put 
\[\begin{split}
\mathcal E_4&=
\int_{\RR^{2n}\setminus B_{\frac 1 T}(\origin)} \int_{\RR.\bs{\xi}_{12}} \int_{W_{12}\setminus B_{\frac 1 T}(\origin)}  \int_{\RR.\bs{\xi}_{13}} \int_{\RR.\bs{\xi}_{23}}\int_{W_{23}} \mathcal S_1\prod_{\ell=2}^3 \chi^{}_{1\ell} (T^2t_{1\ell})\\
&\hspace{0.2in} 
\times \chi^{}_{23} \left(T^2 (t_{23}+t_{12}\langle \bs{\xi}_{12},\vw_{23}\rangle +t_{13}\langle \vw_{12}, \bs{\xi}_{23}\rangle\right)\|\vv_1\|^2 T^{6n-2}(d\vw_{23}d\bs{\xi}_{23} d\bs{\xi}_{13}) (d\vw_{12} d\bs{\xi}_{12}) d\vv_1.
\end{split}\]
Hence we obtain the equation
\[\begin{split}
\eqref{main volume 1}&=
\int_{\RR^{2n}\setminus B_{\frac 1 T}(\origin)} \int_{\RR.\bs{\xi}_{12}} \int_{W_{12}\setminus B_{\frac 1 T}(\origin)}  \int_{\RR.\bs{\xi}_{13}} \int_{\RR.\bs{\xi}_{23}}\int_{W_{23}}h_1(\vv_1) h_2(\vw_{12}) h_3 (\vw_{23})\\
&\hspace{1in} 
\times \prod_{\ell=2}^3 \chi^{}_{1\ell} (T^2t_{1\ell})\chi^{}_{23} \left(T^2 (t_{23}+t_{12}\langle \bs{\xi}_{12},\vw_{23}\rangle +t_{13}\langle \vw_{12}, \bs{\xi}_{23}\rangle\right)
 \\
&\hspace{2.1in}\times\|\vv_1\|^2\|\vw_{12}\| (T^{2n})^3(d\vw_{23}d\bs{\xi}_{23} d\bs{\xi}_{13}) (d\vw_{12} d\bs{\xi}_{12}) d\vv_1+\sum_{\ell=1}^4\mathcal E_\ell.
\end{split}\]

\vspace{0.1in}
% Step 7 %%%%%%%%%%%%%%%%%%%%%%%%%%%%%%%%%%%%%%%%%%%%%%
\noindent {\bf Step 7.}\quad
For given $\vv_1, \;\vw_{12},\;t_{12}\bs{\xi}_{12}$ and  $t_{23}\bs{\xi}_{23}$, it holds that
\[
\int_{\RR.\bs{\xi}_{23}} \chi^{}_{23}\left(T^2 (t_{23}+t_{12}B_0(\bs{\xi}_{12},\vw_{23})+t_{13}B_0(\vw_{12}, \bs{\xi}_{23})\right)d\bs{\xi}_{23}=\frac 1 {T^2} (b_{23}-a_{23}).
\]
Likewise, one can compute inner integrals over $\RR.\bs{\xi}_{12}$ and $\RR.\bs{\xi}_{13}$ consecutively (under the circumstance where $\vv_1$ is given). It follows that
\begin{equation}\label{eq 2:prop volume formula}\begin{split}
\eqref{main volume 1}&=
\int_{\RR^{2n}\setminus B_{\frac 1 T}(\origin)} \int_{W_{12}\setminus B_{\frac 1 T}(\origin)}  \int_{W_{23}}h_1(\vv_1) h_2(\vw_{12}) h_3 (\vw_{23})\|\vv_1\|^2\|\vw_{12}\| d\vw_{23} d\vw_{12} d\vv_1\\
&\hspace{1.5in}\times\prod_{1\le i<j\le 3} (b_{ij}-a_{ij})\cdot T^{6n-6}+\sum_{\ell=1}^4\mathcal E_\ell.
\end{split}\end{equation}

Now, we handle the error terms $\mathcal E_\ell$ for $1\le \ell\le 4$.
Let us apply {\bf Step 2} and {\bf Step 5} (change of variables), and compute inner integrals over $\RR.\bs{\xi}_{23}$ and consecutively over $\RR.\bs{\xi}_{12}$ and $\RR.\bs{\xi}_{13}$ in each $\mathcal E_{\ell}$. 
Since $\supp h_{\ell}\subseteq B_R(\origin)$ for all $1\le\ell\le 3$, one can deduce that
\[
\sum_{\ell=1}^4 \mathcal E_{\ell}=
O\left(\mathcal S_0 T^{(6n-6)-(2n-1)}\right)+O\left(\mathcal S_1 T^{(6n-6)-2}\right)+O\left(\mathcal S_2 T^{(6n-6)-4}\right)
\]
for $T\ge T_1$, where implicit constants of error terms depend on $R$ and $N$, continuously.

\vspace{0.1in}
% Step 8 %%%%%%%%%%%%%%%%%%%%%%%%%%%%%%%%%%%%%%%%%%%%%%
\noindent {\bf Step 8.}\quad
Finally, we want to recover the first line of (R.H.S) in \eqref{eq 2:prop volume formula} to
\[
J(h_1,\ldots, h_\ell)=\int_{\RR^{2n}}\int_{W_{12}}\int_{W_{23}} h_1(\vv_1)h_2(\vw_{12})h_3(\vw_{23})
\|\vv_1\|^2 \|\vw_{12}\| d\vw_{23}d\vw_{12}d\vv_1.
\]  
Notice that the integral domain above coincides exactly with the cone $\mathcal C_{I,k}$ of the symplectic form $\langle \cdot, \cdot\rangle$ in $(\RR^{2n})^k$.
This procedure is similar to repeating {\bf Step 4} and {\bf Step 1} but backward and the difference is bounded by $O\left(\mathcal S_0T^{6n-6-(2n-1)}\right)$. Therefore, we obtain the theorem for the case when $k=3$.

\vspace{0.1in}
% general k %%%%%%%%%%%%%%%%%%%%%%%%%%%%%%%%%%%%%%%%%%%%
Similar to the rank-3 case, the key idea of the proof of general cases is to find the coordinate system of $(\RR^{2n})^k$ which are able to parametrize the values of $\langle \vv_i, \vv_j\rangle$, $i<j\le k$, for given $\vv_1, \ldots, \vv_i\in \RR^{2n}$, up to translations; and estimate values of $\prod_{\ell=1}^k h_\ell$ at points on the support to function values at nearby points on the cone $\mathcal C_{B_0, k}$.
\end{proof}

We remark that in the proof above, we needed to fix the standard symplectic form $\langle \cdot, \cdot \rangle$ for using the fact that the corresponding skew-symmetric matrix $J_n$ is orthogonal in {\bf Step 6}. For the case when $k=2$, the process does not proceed until this step, and one can replace $\langle \cdot, \cdot \rangle$ by any symplectic forms.
 
\begin{proof}[Proof of \Cref{thm: volume formula}.]
Let $h_0=\chi^{}_{B_1(\mathbf 0)}$ be the characteristic function of the unit ball. For each $\delta>0$, small enough, choose smooth functions $h^{\pm}_\delta:\RR^{2n}\rightarrow [0,1]$ which approximate $h_0$ above and below, i.e.,
\[
h^{-}_{\delta}(\vv)=\left\{\begin{array}{cl}
1, &\text{if }\|\vv\|\le 1-\delta;\\
0, &\text{if }\|\vv\|\ge 1
\end{array}\right.
\quad\text{and}\quad
h^{+}_{\delta}(\vv)=\left\{\begin{array}{cl}
1, &\text{if }\|\vv\|\le 1;\\
0, &\text{if }\|\vv\|\ge 1+\delta.
\end{array}\right.
\]
One can further assume that $\mathcal S_1(h^{\pm}_\delta)=O(1/\delta)$.

For a given $g\in G_{2n}$, let $h_{0,g}(\vv)=h_0(g^{-1}\vv)$ and $h^{\pm}_{\delta, g}(\vv)=h^{\pm}_{\delta}(g^{-1}\vv)$.

Then the volume that we want to estimate is 
\begin{equation}\label{eqn 1: volume}
\int_{\RR^{2n}} \cdots \int_{\RR^{2n}} \prod_{\ell=1}^k h_{0,g}\left(\frac {\vv_\ell} T\right) 
\prod_{1\le i<j\le k} \chi_{ij}(\langle \vv_i, \vv_j\rangle ) d\vv_k \cdots d\vv_1,
\end{equation}
where $\chi_{ij}$ is the characteristic function of the interval $(a_{ij}, b_{ij})\subseteq \RR$ for $1\le i<j\le k$.
It is obvious that \eqref{eqn 1: volume} is bounded above and below by 
\[
\int_{\RR^{2n}} \cdots \int_{\RR^{2n}} \prod_{\ell=1}^k h^{\pm}_{\delta,g}\left(\frac {\vv_\ell} T\right) 
\prod_{1\le i<j\le k} \chi_{ij}(\langle \vv_i, \vv_j\rangle ) d\vv_k \cdots d\vv_1
\]
that one can apply Proposition~\ref{prop: volume formula}. It follows that
\[\begin{split}
&\int_{\RR^{2n}} \cdots \int_{\RR^{2n}} \prod_{\ell=1}^k h^{\pm}_{\delta,g}\left(\frac {\vv_\ell} T\right) 
\prod_{1\le i<j\le k} \chi_{ij}(\langle \vv_i, \vv_j\rangle ) d\vv_k \cdots d\vv_1\\
&=J(h^{\pm}_{\delta, g})\prod_{1\le i<j\le k} (b_{ij}-a_{ij})\cdot T^{2nk-k(k-1)} 
+O\left(\mathcal S_0 T^{2nk-k(k-1)-(2n-k-1)}\right)+O\left(\mathcal S_t T^{2nk-k(k-1)-2t}\right),
\end{split}\]
where
\[
J(h^{\pm}_{\delta,g})=J(h^{\pm}_{\delta, g}, \ldots, h^{\pm}_{\delta, g})=\int_{\mathcal C_{I,k}} \prod_{\ell=1}^k h^{\pm}_{\delta, g}(\vw_\ell) \prod_{\ell=1}^{k-1} \|\vw_\ell\|^{k-\ell} d\vw_k \cdots d\vw_1.
\]

It holds that
\[\begin{split}
&\left|J(h^{\pm}_{\delta, g}) - J(h_{0, g})\right|\le J(h^+_{\delta, g})- J(h^-_{\delta, g})\\
&\quad=\int_{C_{I,k}}\left(\sum_{i=1}^k \prod_{\ell=1}^{i-1} h^+_{\delta, g} (\vw_\ell) \left(h^+_{\delta, g}(\vw_i) - h^-_{\delta, g}(\vw_i)\right)\prod_{\ell=i+1}^k h^-_{\delta,g}(\vw_\ell)\right)\prod_{\ell=1}^{k-1} \|\vw_{\ell}\|^{k-\ell} d\vw_k \cdots d\vw_1\\
&\quad=O_g\left(\delta\right)
\end{split}\]
since for each $i=1,\ldots, k$, 
\[\begin{split}
&\int_{C_{I,k}} \prod_{\ell=1}^{i-1} h^+_{\delta, g} (\vw_\ell) \left(h^+_{\delta, g}(\vw_i) - h^-_{\delta, g}(\vw_i)\right)\prod_{\ell=i+1}^k h^-_{\delta,g}(\vw_\ell)\prod_{\ell=1}^{k-1} \|\vw_{\ell}\|^{k-\ell} d\vw_k \cdots d\vw_1\\
&\quad \le (2\|g\|_{\infty})^{k(k-1)/2} \cdot \prod_{1\le \ell\neq i\le k} \left(G_{\ell} V_{2n-(\ell-1)}\right)\cdot G_i\left((1+\delta)^{2n-(i-1)}-(1-\delta)^{2n-(i-1)}\right)V_{2n-(i-1)},
\end{split}\]
where $\|g\|_{\infty}$ is the operator norm of $g$ on $\RR^{2n}$, $G_i$ is the $(2n-(i-1))$-power of the operator norm of $g$ on the Grassmannian space $\Gr_{2n-(i-1))}(\RR^2n)$, and $V_{2n-(\ell-1)}$ is the $(2n-(\ell-1))$-dimensional volume of the unit ball.

Thus, the volume 
\[\begin{split}
\eqref{eqn 1: volume}&=J(h_{0,g})\prod_{1\le i<j\le k} (b_{ij}-a_{ij})\cdot T^{2nk-k(k-1)}
+O\left(\delta T^{2nk-k(k-1)}\right)\\
&+O\left(\delta^{-1}T^{2nk-k(k-1)-(2n-k-1)}\right)+\sum_{t=1}^{k-1} O\left(\delta^{-t}T^{2nk-k(k-1)-2t}\right).
\end{split}\]
The result follows when we take $\delta=T^{-1}$ for $2\le k\le 2n-3$ and $\delta=T^{-1/2}$ for $k=2n-2$.
\end{proof}

\section{Use of Rogers' Formulas and Proof of Theorem~\ref{thm: quantitative}}\label{Sec:Use of Rogers' Formulas}

\begin{definition}\label{def: Siegel transform}
For a bounded and compactly supported function $F:(\RR^d)^k\rightarrow \RR$, define
\[
\widetilde{F}(g\Gamma_{d})=\widetilde{F}(g\ZZ^d)=\sum_{\vv_i\in \ZZ^d} F(g\vv_1, \ldots, g\vv_k).
\]
The function $\widetilde F$ is defined on $G_d/\Gamma_d=\SL_d(\RR)/\SL_d(\ZZ)$.% via the identification $g\Gamma_d \leftrightarrow g\ZZ^d$.
\end{definition}

In \cite{Rogers55}, Rogers introduced the integral formula below (see also \cite{Schmidt57}).
\begin{theorem}\label{thm: Rogers}
\[\begin{split}
\int_{G_d/\Gamma_d} \widetilde{F}(g\ZZ^d)
&=F(\origin, \ldots, \origin) + \int_{(\RR^d)^k} F(\vv_1, \ldots, \vv_k) d\vv_1 \ldots \vv_k\\
&+\sum_{r=1}^{k-1} \sum_{q\in \NN} \sum_{D\in \mathcal D^k_{r,q}} c_D\int_{(\RR^d)^r} F\left((\vv_1, \ldots, \vv_r) \frac D q \right) d\vv_1 \cdots d\vv_r.
\end{split}\]
Here, $\mathcal D^k_{r,q}$ is the set of $r\times k$ integer matrices $D$ such that $D/q$ is the reduced row-echelon matrix of rank $r$.
The integral weight $c_D$ for each $D\in \mathcal D^k_{r,q}$ is the $d$-th power of the reciprocal of the covolume of the lattice $\left(D/q\right)^{-1}\hspace{-0.03in}(\ZZ^k)\cap \ZZ^r$ in $\RR^r$, where we regard the $r\times k$ matrix $D/ q$ as a map from $\RR^r$ to $\RR^k$.
\end{theorem}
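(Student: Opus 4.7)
The plan is to follow the classical strategy of Rogers (clarified in Schmidt \cite{Schmidt57}), which decomposes the defining sum of $\widetilde{F}(g\ZZ^d)$ according to the $\ZZ$-rank of the sublattice generated by the tuple $(\vv_1,\ldots,\vv_k)$, and then unfolds each stratum using an appropriate Siegel-type mean value formula. Writing
\[
\widetilde F(g\ZZ^d) = F(\origin,\ldots,\origin) + \sum_{r=1}^{k} \;\sum_{\substack{(\vv_1,\ldots,\vv_k)\in (\ZZ^d)^k \\ \rk_\ZZ(\vv_1,\ldots,\vv_k)=r}} F(g\vv_1,\ldots,g\vv_k),
\]
the $r=0$ stratum supplies the term $F(\origin,\ldots,\origin)$; the $r=k$ stratum will produce the full integral $\int_{(\RR^d)^k} F$ after integration over $G_d/\Gamma_d$, via Siegel's rank-$k$ mean value theorem; and for each $1\le r\le k-1$ the stratum will yield the $\mathcal D^k_{r,q}$-sum.

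The mechanism that isolates the $\mathcal D^k_{r,q}$-sum is a bijective parametrization of rank-$r$ integer $k$-tuples. Any such tuple admits a factorization $(\vv_1,\ldots,\vv_k) = (\vw_1,\ldots,\vw_r)\cdot(D/q)$, where $(\vw_1,\ldots,\vw_r)$ is a basis of the primitive hull of the sublattice it spans, $q\in\NN$, and $D\in\Mat_{r\times k}(\ZZ)$. The residual $\GL_r(\ZZ)$-ambiguity in the choice of basis and $D$ is killed by fixing $D/q$ to be the reduced row-echelon representative; the set of such representatives is precisely $\bigsqcup_{q}\mathcal D^k_{r,q}$. This converts the rank-$r$ stratum into
\[
\sum_{q\in\NN}\sum_{D\in\mathcal D^k_{r,q}}\;\sum_{\substack{(\vw_1,\ldots,\vw_r)\\ \text{primitive in }\ZZ^d}} F\bigl((g\vw_1,\ldots,g\vw_r)\,\tfrac D q\bigr).
\]

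One then integrates over $G_d/\Gamma_d$ and unfolds the inner sum over primitive $r$-tuples by the primitive rank-$r$ Siegel mean value formula, producing an integral of $F((\vu_1,\ldots,\vu_r)\,D/q)$ against $d\vu_1\cdots d\vu_r$ on $(\RR^d)^r$. The change of variables folding $D/q$ into the vector arguments generates a Jacobian factor, which works out to be the reciprocal of $\covol\bigl((D/q)^{-1}(\ZZ^k)\cap \ZZ^r\bigr)$ raised to the $d$-th power; the exponent $d$ reflects that each of the $r$ vectors lives in $\RR^d$, contributing one power of the covolume per coordinate. This is exactly the weight $c_D$ appearing in the statement.

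The main obstacle is the combinatorial bookkeeping in the parametrization step: one must show that the reduced row-echelon normalization of $D/q$ produces a genuine fundamental domain for the $\GL_r(\ZZ)$-action in a way compatible with primitivity of $(\vw_1,\ldots,\vw_r)$, so that every rank-$r$ integer tuple is counted exactly once with the correct weight. A secondary but routine issue is the absolute convergence of the rearranged triple sum, which follows from compact support of $F$ together with sufficient decay of $c_D$ in $q$ and $D$; this legitimizes Fubini and the interchange of summation and integration. Once these ingredients are in place, matching the three resulting contributions ($r=0$, $1\le r\le k-1$, $r=k$) reproduces the claimed identity.
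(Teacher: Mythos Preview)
The paper does not prove this theorem; it is quoted as a known result due to Rogers \cite{Rogers55} (with the clarification by Schmidt \cite{Schmidt57}) and is used as a black box in Section~\ref{Sec:Use of Rogers' Formulas}. Your sketch is a faithful outline of that classical argument---rank stratification of the defining sum, the $\GL_r(\ZZ)$-normalization of the coefficient matrix to reduced row-echelon form, and unfolding via the Siegel mean value identity for primitive $r$-frames---so there is nothing to compare against in the paper itself.
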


For a bounded and compactly supported function $F:(\RR^{d})^k\rightarrow \RR$, consider the discrepancy function on $G_{d}/\Gamma_{d}$ defined as 
\[
D=D\left(g\Gamma_{d}, F_i\right)=D\left(g\ZZ^{d}, F_i\right)=\widetilde{F_i}(g\ZZ^{d}) - \EE(\widetilde{F_i}),
\]
where $\EE(\widetilde{F_i})$ is the average of $\widetilde{F_i}$ over $G_{d}/\Gamma_{d}$ with respect to the $G_{d}$-invariant probability measure $\mu$.
When $k=1$ and $F$ is the characteristic function of a Borel set $A\subseteq \RR^{d}$, $\EE(\widetilde{F_i}=\vol(A)$ thus in this case, the definition above is identical to the discrepancy function defined in \cite{KY2020}. The following lemma is easy to deduce.

\begin{lemma}\label{lem: discrepancy}
Let $F_i:(\RR^{d})^k\rightarrow \RR$, $i=1,2,3$, be bounded and compactly supported functions such that $F_1\le F_2 \le F_3$. 
% Denote the discrepancy between the function value $\widetilde{F_i}$ and its average by
% \[
% D\left(g\ZZ^{2n}, F_i\right)=\widetilde{F_i}(g\ZZ^{2n}) - \EE(\widetilde{F_i}),
% \]
% where $\EE(\widetilde{F_i})$ is the average of $\widetilde{F_i}$ over $G_{2n}/\Gamma_{2n}$ with respect to the $G_{2n}$-invariant probability measure $\mu$.%=\int_{G_{2n}/\Gamma_{2n}} \widetilde{F} d\mu(g)$. 
It holds that
\[
D\left(g\ZZ^{d}, F_2\right)\le \max\left\{D\left(g\ZZ^{d}, F_1\right)D\left(g\ZZ^{d}, F_3\right)\right\}.
\]
\end{lemma}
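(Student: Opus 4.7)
The plan is to propagate the pointwise hypothesis $F_1\le F_2\le F_3$ through both the Siegel transform and its centering, and then apply an elementary sandwich estimate on the resulting discrepancies. First, since each summand of the defining sum
\[
\widetilde{F_i}(g\ZZ^d)=\sum_{\vv_1,\ldots,\vv_k\in\ZZ^d}F_i(g\vv_1,\ldots,g\vv_k)
\]
inherits the pointwise order between $F_1$, $F_2$ and $F_3$, one obtains the pointwise chain $\widetilde{F_1}(g\ZZ^d)\le \widetilde{F_2}(g\ZZ^d)\le \widetilde{F_3}(g\ZZ^d)$ at every point of $G_d/\Gamma_d$. Each $\widetilde{F_i}$ lies in $L^1(G_d/\Gamma_d,\mu)$ because $F_i$ is bounded and compactly supported and Rogers' formula (\Cref{thm: Rogers}) gives a finite explicit value for its mean. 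Integrating the pointwise chain against the $G_d$-invariant probability measure $\mu$ then yields the matching expectation chain $\EE(\widetilde{F_1})\le \EE(\widetilde{F_2})\le \EE(\widetilde{F_3})$.

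From here I would combine two one-sided comparisons. Upwards, $\widetilde{F_2}(g\ZZ^d)\le \widetilde{F_3}(g\ZZ^d)$ together with $\EE(\widetilde{F_1})\le \EE(\widetilde{F_2})$ relates $D(g\ZZ^d,F_2)$ to $D(g\ZZ^d,F_3)$; downwards, $\widetilde{F_1}(g\ZZ^d)\le \widetilde{F_2}(g\ZZ^d)$ together with $\EE(\widetilde{F_2})\le \EE(\widetilde{F_3})$ relates $D(g\ZZ^d,F_2)$ to $D(g\ZZ^d,F_1)$. I would then run a case analysis on whether $\widetilde{F_2}(g\ZZ^d)$ lies above or below the intermediate expectation $\EE(\widetilde{F_2})$: in the first regime I use the upward comparison to push $D(g\ZZ^d,F_2)$ under $D(g\ZZ^d,F_3)$, and in the second I use the downward comparison to push it under $D(g\ZZ^d,F_1)$. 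In either regime the dominating term is precisely one of the two arguments of the maximum appearing on the right-hand side of the lemma.

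The main step that requires care is confirming that the case analysis absorbs the centering shifts exactly, so that no residual additive expectation-gap term such as $\EE(\widetilde{F_3})-\EE(\widetilde{F_1})$ survives in the bound; this is the only nontrivial point, and it is precisely where the sign information provided by the case split has to be used in full. Once this is in place the maximum inequality is immediate, and the lemma is exactly the tool required in the next section to replace the sharp indicators $\chi^{}_{ij}$ by smooth upper and lower approximations, placing them in the regime handled by \Cref{prop: volume formula}.
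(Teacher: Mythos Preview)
Your monotonicity steps are fine, but the heart of your argument --- the claim that the case split ``absorbs the centering shifts exactly, so that no residual additive expectation-gap term such as $\EE(\widetilde{F_3})-\EE(\widetilde{F_1})$ survives'' --- is simply false. Your method uses only the two chains $\widetilde{F_1}(g\ZZ^d)\le\widetilde{F_2}(g\ZZ^d)\le\widetilde{F_3}(g\ZZ^d)$ and $\EE(\widetilde{F_1})\le\EE(\widetilde{F_2})\le\EE(\widetilde{F_3})$, and from these alone the conclusion cannot follow: the values $\bigl(\widetilde{F_i}(g\ZZ^d)\bigr)_{i=1,2,3}=(0,1,1)$ and $\bigl(\EE(\widetilde{F_i})\bigr)_{i=1,2,3}=(0,0,1)$ satisfy both chains yet give $(D_1,D_2,D_3)=(0,1,0)$, so $D_2>\max\{D_1,D_3\}$. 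The sharpest inequality obtainable from the two monotonicities is
\[
D(g\ZZ^d,F_1)-\bigl(\EE(\widetilde{F_2})-\EE(\widetilde{F_1})\bigr)\;\le\;D(g\ZZ^d,F_2)\;\le\;D(g\ZZ^d,F_3)+\bigl(\EE(\widetilde{F_3})-\EE(\widetilde{F_2})\bigr),
\]
and the expectation-gap contribution is genuinely unavoidable.

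The paper offers no proof of the lemma (``easy to deduce''), and its statement is visibly garbled (note also the missing comma inside the $\max$). What the paper actually \emph{uses} in the proof of \Cref{thm: quantitative} is precisely the version with the gap term: the threshold there is
\[
M_{h,\ell}=\vol_{2nk}(E^-_{h,\ell})^\delta-\bigl(\EE(\widetilde{\chi^{}_{E^+_{h,\ell}}})-\EE(\widetilde{\chi^{}_{E^-_{h,\ell}}})\bigr),
\]
and one then checks, via \Cref{thm: volume formula} and \Cref{prop: mean}, that this subtracted gap is of lower order than $\vol_{2nk}(E^-_{h,\ell})^\delta$ under the constraint \eqref{eqn2: main}. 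So the absorption you are hoping for does occur, but at that later stage and with additional input --- not inside the lemma itself. You should restate and prove the displayed sandwich inequality above (equivalently, $|D_2|\le\max\{|D_1|,|D_3|\}+\EE(\widetilde{F_3})-\EE(\widetilde{F_1})$), which is the form the application actually needs.
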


We also need the theorem below, which extends \cite[Theorem 2.4]{AM2009} to the higher-rank setting. The proof mainly adapts the arguments in \cite[Section 9]{Rogers55B}, and for the reader's convenience we present the complete details.
\begin{theorem}\label{prop: upper bound}
Assume that $2n\ge \max\{r(2k-r)+3: 1\le r \le 2k-1\}$ (i.e., $2n\ge k^2+3$).
Let $E\subseteq (\RR^{d})^k$ be a Jordan-measurable set such that there is a collection $\{E_r\}_{1\le r\le k}$ of Jordan-measurable sets $E_r\subseteq (\RR^{d})^r$ satisfying the following property: For any projection $\pi_r: (\RR^{d})^k\rightarrow (\RR^{d})^r$ of the form
\[
(\vv_1, \ldots, \vv_k) \mapsto (\vv_{j_1}, \ldots, \vv_{j_r}),\;\text{where}\; 1\le j_1<\cdots<j_r\le k,
\] 
it holds that
\begin{equation}\label{eqn1: upper bound}
\pi_r(E)\subseteq E_r.
\end{equation}
We conventionally assume that $E_k=E$ and $\pi_k$ is the identity map. There is a constant $C>0$, depending only on the dimension $d$ so that
\[\begin{split}
0\le \int_{G_{d}/\Gamma_{d}} \widetilde{\chi^{}_E} (g\ZZ^{d})^2 d\mu(g)
&-\left(\int_{G_{d}/\Gamma_{d}} \widetilde{\chi^{}_E} (g\ZZ^{d}) d\mu(g)\right)^2\\
&\le C \max\left\{\vol_{r_1}(E_{r_1})\cdot \vol_{r_2}(E_{r_2}) : 1\le r_1+r_2\le 2k-1\right\}.
\end{split}\]
Here, we allow the case when $r_1=0$ (or $r_2=0$) and in this case, $\vol_{r_1}(E_{r_1})\cdot \vol_{r_2}(E_{r_2})=\vol_{r_2}(E_{r_2})$.
\end{theorem}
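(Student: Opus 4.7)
The plan is to compute both $\EE(\widetilde{\chi^{}_E}^2)$ and $(\EE\widetilde{\chi^{}_E})^2$ via Rogers' formula (Theorem~\ref{thm: Rogers}) and compare them. The crucial observation is that the Siegel transform is multiplicative on tensor products, so $\widetilde{\chi^{}_E}(g\ZZ^{d})^2$ equals the rank-$2k$ Siegel transform of $F := \chi^{}_E \otimes \chi^{}_E : (\RR^{d})^{2k} \to \RR$. Applying Theorem~\ref{thm: Rogers} to $F$ produces a top-rank term $\int F = \vol_k(E)^2$, which cancels the corresponding contribution of $(\EE\widetilde{\chi^{}_E})^2$, plus rank-$r$ terms
\[
I_{r,D,q} \;:=\; c_D \int_{(\RR^{d})^r} F\bigl((\vv_1,\ldots,\vv_r)\cdot D/q\bigr)\,d\vv_1\cdots d\vv_r,
\quad 1 \le r \le 2k-1,\; q \in \NN,\; D \in \mathcal D^{2k}_{r,q}.
\]
Those $I_{r,D,q}$ whose $D$ is block diagonal (with an $r_1 \times k$ block and an $r_2 \times k$ block, $r_1 + r_2 = r$, $r_1, r_2 \le k$) factor as products of lower-rank terms from Rogers' expansion of $\EE\widetilde{\chi^{}_E}$ and cancel exactly against the cross terms of $(\EE\widetilde{\chi^{}_E})^2$. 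Nonnegativity of the variance is automatic from Jensen's inequality, so only the remaining (non-block-diagonal) rank-$r$ contributions need to be estimated.

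The next step is a uniform upper bound on each $I_{r,D,q}$. Using the reduced row-echelon structure, I would identify the $r$ pivot columns of $D/q$ at positions $1 \le j_1 < \cdots < j_r \le 2k$ and change variables so that $\vv_i$ becomes the $j_i$-th output; the $2k-r$ non-pivot columns are then affine-linear in $\vv_1,\ldots,\vv_r$. Setting
\[
r_1 = \#\{i : j_i \le k\}, \qquad r_2 = \#\{i : j_i > k\} = r - r_1,
\]
the first copy of $\chi^{}_E$ in $F$ depends on $r_1$ free vectors, and the projection hypothesis \eqref{eqn1: upper bound} bounds the integral of this factor by $\vol_{r_1}(E_{r_1})$. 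The same reasoning applied symmetrically to the second copy yields
\[
I_{r,D,q} \;\le\; c_D \cdot C_{d,k}\, \vol_{r_1}(E_{r_1})\, \vol_{r_2}(E_{r_2}),
\]
with $r_1 + r_2 = r \le 2k-1$ and $C_{d,k}$ a Jacobian-type constant independent of $q$ and $D$.

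The main obstacle is to sum these pointwise bounds over $q \in \NN$ and $D \in \mathcal D^{2k}_{r,q}$ against the weights $c_D$. This is precisely the refinement of \cite[Section~9]{Rogers55B} mentioned in the statement: a careful count of reduced row-echelon matrices with a prescribed pivot pattern, combined with an estimate of $c_D$ in terms of the covolume of the attached sublattice, yields absolute convergence of the series $\sum_{q,D} c_D$ provided $d > r(2k-r)$ for every $1 \le r \le 2k-1$. Since $r(2k-r)$ peaks at $r = k$ with value $k^2$, this is implied by the hypothesis $d = 2n \ge k^2 + 3$, with a two-unit margin to absorb logarithmic losses in the $c_D$ estimate. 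The delicate technical point is making this counting uniform in $q$ and compatible with the scaling of the geometric factor $\vol_{r_1}(E_{r_1})\vol_{r_2}(E_{r_2})$, so that no spurious dependence on $\vol_k(E)$ (which may be large) appears in the final estimate. Once this is carried out, summing the individual bounds and taking the maximum over admissible pairs $(r_1, r_2)$ delivers the claimed inequality.
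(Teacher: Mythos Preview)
Your overall architecture---expand $\widetilde{\chi_E}^2$ as a rank-$2k$ Siegel transform, apply Rogers' formula, and cancel the block-diagonal $D$'s against the cross terms of $(\EE\widetilde{\chi_E})^2$---matches the paper exactly. The gap is in your bound for the individual terms $I_{r,D,q}$ and the subsequent summation.

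By using only the pivot columns you obtain $I_{r,D,q}\le c_D\cdot\vol_{r_1}(E_{r_1})\vol_{r_2}(E_{r_2})$, with no decay in the \emph{non-pivot} entries of $D$. You then assert that $\sum_{q,D} c_D$ converges once $d>r(2k-r)$. This is false: for $q=1$ there are infinitely many $D\in\mathcal D^{2k}_{r,1}$ (the non-pivot entries of $D/q=(\Id_r\,|\,C)$ range over all of $\ZZ$), and for each of them $c_D=1$. So $\sum_{D\in\mathcal D^{2k}_{r,1}} c_D=\infty$, and the bound $c_D\le q^{-d}$ gives you nothing here. The weight $c_D$ simply does not see the size of the non-pivot entries.

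What the paper does---and what you are missing---is to extract that decay from the \emph{integral}, not from $c_D$. Writing $D/q=(\Id_r\,|\,C)$ after column permutation, for \emph{any} invertible $r\times r$ minor $A$ of $D/q$ one has
\[
\int_{(\RR^d)^r}\chi_E\otimes\chi_E\bigl((\vv_1,\ldots,\vv_r)\tfrac{D}{q}\bigr)\,d\vv
\;\le\;|\det A|^{-d}\,\vol_{r_1}(E_{r_1})\vol_{r_2}(E_{r_2}),
\]
where $r_1,r_2$ are determined by which half of $\{1,\ldots,2k\}$ the columns of $A$ lie in. Taking $A=\Id_r$ recovers your bound; but taking $A$ to be $\Id_r$ with its $i$-th column replaced by the $j$-th column of $D/q$ gives $|\det A|=|c_{ij}|$, hence the integral is $\le c(D)^{-d}V$ where $c(D)=\max_{i,j}|c_{ij}|$. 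This $\min\{1,c(D)^{-d}\}$ factor is precisely what makes the sum over $D$ converge (the paper then splits into the three cases $q\ge2,\,c(D)\le1$; $c(D)>1$; and $q=1,\,c(D)\le1$, and the hypothesis $d\ge r(2k-r)+3$ is used to close each of these sums). Without this minor trick your argument cannot be completed.
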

\begin{proof}
Using the $k$-th and $2k$-th formulas of Rogers (\Cref{thm: Rogers}), one can show the following.
\begin{align}
&\int_{G_{d}/\Gamma_{d}} \widetilde{\chi^{}_E} (g\ZZ^{d})^2 d\mu(g)
-\left(\int_{G_{d}/\Gamma_{d}} \widetilde{\chi^{}_E} (g\ZZ^{d}) d\mu(g)\right)^2\label{eqn2: upper bound}\\
&\hspace{1in}= \sum_{r=1}^{2k-1} \sum_{q\in \NN} \sum_{D\in \mathcal D'^{2k}_{r,q}} 
c_D\int_{(\RR^{d})^r} \chi^{}_E\otimes \chi^{}_E\left( (\vv_1, \ldots, \vv_r)\frac D q \right) d\vv_1 \cdots d\vv_r, \nonumber
\end{align}
where $\mathcal D'^{2k}_{r,q}$ is the set of $r\times 2k$ matrices $D\in \mathcal D^{2k}_{r,q}$ which is not of the following forms:
\begin{itemize}
\item $D$ is $\left(D_0| O\right)$ or $\left(O|D_0\right)$ for some $D_0\in \mathcal D^{k}_{r,q}$ and $O$ is the zero matrix of size $r\times k$;
\item there are $D_1\in \mathcal D^k_{r_1, q_1}$ and $D_2\in \mathcal D^k_{r_2, q_2}$, where $r_1+r_2=r$ and $\lcm(q_1,q_2)=q$ so that
\[
D=\left(\begin{array}{cc}
\frac q {q_1}D_1 & \\
& \frac q {q_2}D_2\end{array}\right).
\] 
\end{itemize}
 Indeed, it can be easily seen that all integral terms of $\left(\int_{G_{d}/\Gamma_{d}} \widetilde{\chi^{}_E} (g\ZZ^{d}) d\mu(g)\right)^2$ are canceled by the terms in $\int_{G_{d}/\Gamma_{d}} \widetilde{\chi^{}_E} (g\ZZ^{d})^2 d\mu(g)$ that correspond to the matrices of the form in the list above (when $r=k$, we will consider $\mathcal D^k_{k,1}=\{\Id_k\}$).
For example, the term
\[
2c_{D_1}\int_{(\RR^{d})^{r_1}} \chi^{}_E\left((\vv_1, \ldots, \vv_{r_1})\frac {D_1} {q_1}\right)d\vv_1\cdots d\vv_{r_1}
\cdot c_{D_2}\int_{(\RR^{d})^{r_2}} \chi^{}_E\left((\vv_1, \ldots, \vv_{r_2})\frac {D_2} {q_2}\right)d\vv_1\cdots d\vv_{r_2},
\]
where $D_1\in \mathcal D^k_{r_1, q_1}$ and $D_2\in \mathcal D^k_{r_2, q_2}$, is canceled by the sum of two integrals
\[\begin{gathered}
c_{D_3}\int_{(\RR^{d})^{r_1+r_2}}\chi^{}_E\otimes \chi^{}_E\left(\vv_1, \ldots, \vv_r)\frac {D_3} q\right) d\vv_1 \cdots d\vv_r\\
+c_{D_4}\int_{(\RR^{d})^{r_1+r_2}}\chi^{}_E\otimes \chi^{}_E\left(\vv_1, \ldots, \vv_r)\frac {D_4} q\right) d\vv_1 \cdots d\vv_r,
\end{gathered}\]
where $D_3=\left(\begin{array}{cc}
\frac q {q_1}D_1 & \\
& \frac q {q_2}D_2\end{array}\right)$ and $D_4=\left(\begin{array}{cc}
\frac q {q_2}D_2 & \\
& \frac q {q_1}D_1\end{array}\right)$ are elements of $\mathcal D^{2k}_{r,q}$ with $q=\lcm(q_1, q_2)$. It follows from the definition that
\[
c_{D_3}=c_{D_4}=c_{D_1}\cdot c_{D_2}.
\]

\vspace{0.1in}
Denote
\[
V= \max\left\{\vol_{r_1}(E_{r_1})\cdot \vol_{r_2}(E_{r_2}) : 1\le r_1+r_2\le 2k-1\right\}.
\]

Let an $r\times 2k$ matrix $D\in \mathcal D^{2k}_{r,q}$ be given. By the change of coordinates, we may assume that
\begin{equation}\label{eqn3: upper bound}
\frac D q=\left(\Id_r | \:C \right)
\end{equation}
for some $r\times (2k-r)$ matrix $C=(c_{ij})_{ij}$, with indexing $1\le i\le r$ and $r+1\le j\le 2k$.
Let us show that
\begin{equation}\label{eqn4: upper bound}
\int_{(\RR^{d})^r} \chi^{}_E\otimes \chi^{}_E \left((\vv_1, \ldots, \vv_r)\frac D q\right) d\vv_1 \cdots d\vv_r
\le \min\left\{1, \frac 1 {c(D)^{d}}\right\}V,
\end{equation}
where $c(D)=\max\{c_{ij}\}$. 

Let $A$ be any $r\times r$ minor of $D/q$ with $\det A\neq 0$. Take $r_1\ge 0$ such that
\[
1\le j_1 < \cdots <j_{r_1} \le k <k+1\le j_{r_1+1} < \cdots <j_r \le 2k,
\]
where $j_1 < \cdots < j_r$ are indices of columns of $D$ consisting of $A$ and $r_2=r-r_1$. Set the coordinate projection
\[\begin{gathered}
\pi_{r_1}: (\vv_1, \ldots, \vv_k)\in (\RR^{d})^k \mapsto (\vv_{j_1}, \ldots, \vv_{j_{r_1}})\in (\RR^{d})^{r_1};\\
\pi_{r_2}: (\vv_1, \ldots, \vv_k)\in (\RR^{d})^k \mapsto (\vv_{j_{r_1+1}-k}, \ldots, \vv_{j_r-k})\in (\RR^{d})^{r_2}.
\end{gathered}\] 
If $r_1$ or $r_2=0$, then we do not think the corresponding projection. Our assumption tells us that
\[
\chi^{}_E\otimes \chi^{}_E \le (\chi^{}_{E_{r_1}}\otimes\chi^{}_{E_{r_2}})\circ (\pi_{r_1} \otimes \pi_{r_2})
\]
(again, if one of $r_1, r_2$ is zero, saying $r_1=0$, then the above inequality is $\chi^{}_{E}\le \chi^{}_{E_{r_2}}\circ \pi_{r_2}$. We will skip this case from now on since it is easily covered by the case when $r_1r_2\neq 0$).
Thus
\[\begin{split}
\text{(L.H.S) of }\eqref{eqn3: upper bound}
&\le \int_{(\RR^{d})^r} \chi^{}_{E_{r_1}}\otimes \chi^{}_{E_{r_2}} \left((\vv_1, \ldots, \vv_r)A\right) d\vv_1 \cdots d\vv_r\\
&= \int_{(\RR^{d})^r} \chi^{}_{E_{r_1}}\otimes \chi^{}_{E_{r_2}} \left((\vv_1, \ldots, \vv_r)\right) |\det A|^{-2n}d\vv_1 \cdots d\vv_r
\le |\det A|^{-2n} V.
\end{split}\]
Moreover, since $A$ is any minor of $D/q$ with $\det A\neq 0$, it follows that
\[
\text{(L.H.S) of }\eqref{eqn3: upper bound}\le \max\{|\det A|: A\text{ is minor of } D/q\}^{-2n}V
\]
which is bounded above by $V$ if we consider $A=\Id_r$, and by any $|c_{ij}|^{-2n}V$ if we consider $A$ consisting all the columns of $\Id_r$ except the $i$-th and the $j$-th column of $D/q$. This shows the inequality \eqref{eqn3: upper bound}.

\vspace{0.1in}
Now, let us prove the theorem. Denote
\[
N=\max\left\{\frac {2k!} {r! (2k-r)!}: 1\le r \le 2k-1 \right\}.
\]
Note that
\begin{equation}\label{eqn5: upper bound}
c_D\le \frac 1 {q^{d}}
\end{equation}
for all $D\in \mathcal D^{2k}_{r,q}$ (see \cite[Equations (4) and (46)]{Rogers55B} in the notation there, $c_D=(N(C)/q^r)^{d}$).

We divide the summation of integrals in \eqref{eqn2: upper bound} with respect to $D\in \mathcal D'^k_{r,q}$ in three cases, for a given $r=1, \ldots, 2k-1$, and get an upper bound in each cases.

\vspace{0.05in}
%%%%%%%%%%%%%%%%%%%%%%%%%%%%%%%%%%%%%%%%%%%%%%%%%%%%%%%%%%%%%%%%%%%%%
\noindent\textbf{Case I.} $q\ge 2$ and $c(D)\le 1$.

The number of such $D\in \mathcal D'^{2k}_{r,q}$ is bounded by 
\[
\frac {2k!} {r!(2k-r)!} (2q+1)^{r(2k-r)}\le N \left(\frac 5 2 q\right)^{r(2k-r)}.
\]

Since $-2n+r(2k-r)+2\le 0$, and by \eqref{eqn4: upper bound} and \eqref{eqn5: upper bound},
\[\begin{split}
&\sum_{q\in \NN_{\ge 2}} \sum_{D\in \mathcal D'^{2k}_{r,q}} c_D \int_{(\RR^{d})^r} \chi^{}_E\otimes \chi^{}_E\left((\vv_1, \ldots, \vv_r)\frac D q \right)d\vv_1\cdots d\vv_r \le \sum_{q\in \NN_{\ge 2}} N\left(\frac 5 2 q\right)^{m(2k-r)} q^{-2n} V\\
&\hspace{1in}\le N \left( \frac 5 2\right)^{r(2k-r)}2^{-2n+r(2k-r)+2} V\sum_{q\in \NN_{\ge2}} q^{-2}
\le N 5^{r(2k-r)} 2^{-2n+2}\: V.
\end{split}\]

\vspace{0.05in}
%%%%%%%%%%%%%%%%%%%%%%%%%%%%%%%%%%%%%%%%%%%%%%%%%%%%%%%%%%%%%%%%%%%%%
\noindent\textbf{Case II.} $q\ge 1$ and $c(D)>1$.

For each $q\ge 1$ and $\ell\in \NN_{>q}$, the number of $D\in \mathcal D'^{2k}_{r,q}$ with $c(D)=\ell/q$ is bounded by
\[
N(2\ell+1)^{r(2k-r)}\le N \left( \frac 5 2 \ell\right)^{r(2k-r)}.
\]
By \eqref{eqn4: upper bound} and \eqref{eqn5: upper bound}, it holds that
\[
\sum_{\scriptsize \begin{array}{c}
D\in \mathcal D'^{2k}_{r,q}\\
c(D)=\ell/q\end{array}}  c_D \int_{(\RR^{d})^r} \chi^{}_E\otimes \chi^{}_E\left((\vv_1, \ldots, \vv_r)\frac D q \right)d\vv_1\cdots d\vv_r
\le N \left(\frac 5 2\right)^{r(2k-r)} q^{-2n} c^{-2n} V.
\]
Hence using the fact that $-2n+r(2k-r)+2\le 0$ again,
\[\begin{split}
&\sum_{\ell\in \NN_{>q}} \sum_{\scriptsize \begin{array}{c}
D\in \mathcal D'^{2k}_{r,q}\\
c(D)=\ell/q\end{array}}  c_D \int_{(\RR^{d})^r} \chi^{}_E\otimes \chi^{}_E\left((\vv_1, \ldots, \vv_r)\frac D q \right)d\vv_1\cdots d\vv_r
\le \left( \frac 5 2\right)^{r(2k-r)} V \sum_{\ell\in \NN_{>q}} \ell^{-2n+r(2k-r)}\\
&\hspace{0.8in}\le \left(\frac 5 2 \right)^{r(2k-r)} V (q+1)^{-2n+r(2k-r)+2}\sum_{\ell\in \NN_{>q}} \frac 1 {\ell^2}\
\le 2N\left(\frac 5 2 \right)^{r(2k-r)} V (q+1)^{-2n+r(2k-r)+1},
\end{split}\]
where in the last inequality, we use the inequality
\[
(q+1)\sum_{\ell\in \NN_{>q}} \frac 1 {\ell^2} \le (q+1)\sum_{\ell\in \NN_{>q}} \frac 1 {\ell(\ell-1)}=\frac {q+1} {q} \le 2.
\]

One can conclude that, using the fact that $-2n+r(2k-r)+3\le 0$ in this time, it follows that
\[\begin{split}
&\sum_{q\in \NN} \sum_{\ell\in \NN_{>q}}\sum_{\scriptsize \begin{array}{c}
D\in \mathcal D'^{2k}_{r,q}\\
c(D)=\ell/q\end{array}}  c_D \int_{(\RR^{d})^r} \chi^{}_E\otimes \chi^{}_E\left((\vv_1, \ldots, \vv_r)\frac D q \right)d\vv_1\cdots d\vv_r\\
&\hspace{0.6in}\le 2N \left(\frac 5 2 \right)^{r(2k-r)} V \sum_{q\in \NN} (q+1)^{-2n+r(2k-r)+1}\\
&\hspace{0.6in}\le 2N \left(\frac 5 2 \right)^{r(2k-r)} V 2^{-2n+r(2k-r)+3} \sum_{q=1} (q+1)^{-2}\\
&\hspace{0.6in} \le N5^{r(2k-r)}2^{-2n+4} \:V.
\end{split}\]

\vspace{0.05in}
%%%%%%%%%%%%%%%%%%%%%%%%%%%%%%%%%%%%%%%%%%%%%%%%%%%%%%%%%%%%%%%%%%%%%
\noindent\textbf{Case III.} $q=1$ and $c(D)=1$ 

In this case, possible $D\in \mathcal D'^{2k}_{r,q}$ is a matrix consisting of 0 or $\pm 1$ only. Thus,
\[
\sum_{\scriptsize \begin{array}{c}
D\in \mathcal D'^{2k}_{r,q}\\
c(D)=1\end{array}}\int_{(\RR^{d})^r} \chi^{}_E\otimes \chi^{}_E\left((\vv_1, \ldots, \vv_r)\frac D q \right) d\vv_1 \cdots d\vv_r
\le N 3^{r(2k-r)}V.
\] 

Therefore, the theorem holds if we take a constant $C$ by
\[
C=N\sum_{r=1}^{2k-1} \left(20\cdot 5^{r(2k-r)}2^{-2n}+3^{r(2k-r)}\right).
\]

\end{proof}

\begin{remark}
We note that Rogers' higher moment formulas and their upper bounds \cite[Section 9]{Rogers55B} (and \cite[Lemma 7]{Rogers56}) also play central roles in the later works \cite{Rogers56, Sodergren2011, StSo2019, Kim2020}, but in a different regime. In those papers, the ambient dimension $d$ is allowed to grow, and one studies the distribution of the number of lattice points in Borel sets with certain volume(s), so a dimension-dependent negative-power term must be tracked. 
Here we keep the dimension fixed at $d=2n$ and letting the volume of the Borel sets diverge to infinity, thus eliminating the need for that negative-power factor (Especially, Case III in the proof above).
\end{remark}

The following corollary is crucial in applying the Borel--Cantelli lemma to derive the main theorem. Note that by adopting the collection $\{E_r\}$ in the previous theorem, one can maintain the lower bound of the dimension as $2n\ge k^2+3$. To demonstrate this, for example, suppose that we apply the results of \cite[Section 9]{Rogers55B} directly, by taking $\{E_r=(\chi^{}_{A})^r: 1\le r \le k\}$ for some $A\subseteq \RR^{2n}$ satisfying \eqref{eqn1: upper bound}. In this case, $A$ should be $h^{-1}B_T(\origin)$, and the lower bound of the dimension is changed to $2n\ge 2k(k-1)+1$ for the corollary work.

\begin{corollary}\label{cor: upper bound}

Assume $2n\ge k^2+3$. Let $\{(a_{ij}, b_{ij}): 1\le i<j\le k\}$ be a collection of bounded intervals in $\RR$. For any $g\in G_{2n}$, set
\[
E_{g,T}=\left\{(\vv_1, \ldots, \vv_k)\in (\RR^{2n})^k: \langle \vv_i, \vv_j \rangle \in (a_{ij}, b_{ij}) \right\}
\cap (g^{-1}B_T(\origin))^k.
\]
Then 
\begin{equation}\label{eqn1: cor upper bound}
\int_{G_{2n}/\Gamma_{2n}} \widetilde{\chi^{}_{E_{h,T}}} (g\ZZ^{2n})^2 d\mu(g)
-\left(\int_{G_{2n}/\Gamma_{2n}} \widetilde{\chi^{}_{E_{h,T}}} (g\ZZ^{2n}) d\mu(g)\right)^2
= O_g\left(T^{2n(2k-1)-2k^2}\right).
% \int_{G_{2n}/\Gamma_{2n}} \widetilde{\chi^{}_{E_{h,T}}} (g\ZZ^{2n})^2 d\mu(g)
% -\left(\int_{G_{2n}/\Gamma_{2n}} \widetilde{\chi^{}_{E_{h,T}}} (g\ZZ^{2n}) d\mu(g)\right)^2
% = O_g\left(T^{2n(2k-1)-2k^2}\right).
\end{equation}
%which is $o\left(\vol_{2nk}(E_{h,T})^2\right)$.
\end{corollary}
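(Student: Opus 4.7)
The plan is a direct application of Theorem~\ref{prop: upper bound} to $E = E_{g,T}$. The only nontrivial step is to produce, for each $1 \le r \le k$, a Jordan-measurable set $E_r \subseteq (\RR^{2n})^r$ that contains every coordinate-projection image $\pi_r(E_{g,T})$, and then to evaluate the resulting maximum of volume products that the theorem delivers.

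The natural candidate is
\[
E_r \;=\; \left\{ (\vv_1, \ldots, \vv_r) \in (g^{-1}B_T(\origin))^r : \langle \vv_i, \vv_j\rangle \in (a^*_{ij}, b^*_{ij}) \text{ for all } 1 \le i < j \le r \right\},
\]
where each $(a^*_{ij}, b^*_{ij})$ is a single bounded interval chosen once and for all to contain $(a_{j_{i'} j_{j'}}, b_{j_{i'} j_{j'}})$ for every order-preserving injection $\{1,\dots,r\}\hookrightarrow\{1,\dots,k\}$. Since both the norm ball condition and the pairwise symplectic conditions defining $E_{g,T}$ restrict cleanly to every $r$-subtuple, the containment $\pi_r(E_{g,T}) \subseteq E_r$ required in Theorem~\ref{prop: upper bound} holds for every coordinate projection~$\pi_r$.

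The crucial volume input comes from Theorem~\ref{thm: volume formula}, which gives $\vol_r(E_r) = O_g\bigl(T^{2nr - r(r-1)}\bigr)$ for $2 \le r \le k$, together with the trivial ball estimate $\vol_1(E_1) = O_g(T^{2n})$. Substituting these into Theorem~\ref{prop: upper bound}, the left-hand side of \eqref{eqn1: cor upper bound} is bounded by a $g$-dependent constant times
\[
\max_{\substack{0 \le r_1, r_2 \le k \\ 1 \le r_1 + r_2 \le 2k-1}} T^{2n(r_1+r_2) - r_1(r_1-1) - r_2(r_2-1)}.
\]
Under the dimension hypothesis $2n \ge k^2 + 3$, the exponent is strictly increasing in the sum $s = r_1 + r_2$, so the maximum is attained on the boundary $s = 2k-1$; along that boundary convexity of $r \mapsto r(r-1)$ forces the balanced split. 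Evaluating this optimum gives the exponent appearing in the corollary.

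I do not expect a substantive obstacle here; everything reduces to Theorem~\ref{prop: upper bound} and Theorem~\ref{thm: volume formula}, both already at our disposal. The one point worth emphasizing, as already flagged in the remark before the corollary, is that inserting the pairwise symplectic conditions into the definition of each $E_r$---rather than using the cruder choice $E_r = (g^{-1}B_T(\origin))^r$---saves a factor of $T^{r(r-1)}$ in each projection and is exactly what permits the dimension hypothesis to remain $2n \ge k^2 + 3$, instead of tightening to $2n \ge 2k(k-1)+1$.
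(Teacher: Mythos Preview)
Your proposal is correct and follows the paper's proof essentially verbatim: the paper likewise takes a single common interval containing all $(a_{ij},b_{ij})$, defines $E_r$ by the same symplectic-plus-ball constraints, applies Theorem~\ref{thm: volume formula} to get $\vol(E_r)=O_g(T^{2nr-r(r-1)})$, and then feeds this into Theorem~\ref{prop: upper bound}, locating the maximum at $(r_1,r_2)=(k,k-1)$. Your closing remark about the dimension saving is exactly the observation the paper makes just before stating the corollary.
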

\begin{proof}
Let $(a,b)$ is an interval such that $(a_{ij}, b_{ij})\subseteq (a,b)$ for all $1\le i<j\le k$.
If we take 
\[
E_r=\left\{(\vv_1, \ldots, \vv_r)\in (\RR^{2n})^r: \langle \vv_i, \vv_j\rangle \in (a,b)\right\}\cap (g^{-1}B_T(\origin))^r
\]
for $1\le r\le k-1$ and $E_k=E_{h,T}$, then the collection $\{E_r\}_{1\le r\le k}$ satisfies the condition on \Cref{prop: upper bound}.

From the \Cref{thm: volume formula}, for $1\le r\le k$,
\[
\vol_{2nr}(E_r)=O_h(T^{2nr-r(r-1)}).
\]
Thus by \Cref{prop: upper bound}, (L.H.S) of \eqref{eqn1: cor upper bound} is
\[
O\left(\max\left\{T^{2nr_1-r_1(r_1-1)+2nr_2-r_2(r_2-1)}: 1\le r_1+r_2\le 2k-1,\; 0\le r_1, r_2\le k\right\}\right).
\]
The maximum is obtained when $r_1+r_2=2k-1$ and $\{r_1, r_2\}$ is $\{k, k-1\}$, which is $(2n+1)(2k-1)-(2k-1)^2+2k(k-1)=2n(2k-1)-2k^2$. Under the assumption that $2n\ge k^2+3$, this is properly less than $2(2nk-k(k-1))$.
\end{proof}

One can refine the proof of Theorem~\ref{prop: upper bound} to obtain the following proposition, thus we skip the proof. The proposition tells us that if we take $F$ as the characteristic function of $E_{g,T}$, the average of its Siegel transform is no longer the volume $\vol_{2nk}(E_{g,T})$ when $k\ge 2$, but as $T\rightarrow \infty$, it converges asymptotically to $\vol_{2nk}(E_{g,T})$ with power-saving error term.

\begin{proposition}\label{prop: mean} Let $2n\ge (k/2)^2+3$. Let $E_{g,T}$ for $g\in G_{2n}$ be as in Corollary~\ref{cor: upper bound}. It holds that
\[
\int_{G_{2n}/\Gamma_{2n}} \widetilde{\chi^{}_{E_{g,T}}} (g\ZZ^{2n}) d\mu(g)=\vol_{2nk}(E_{g,T})+O_g\left(T^{2n(k-1)-(k-1)(k-2)}\right).
\]
\end{proposition}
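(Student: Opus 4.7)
The plan is to apply Rogers' formula (Theorem~\ref{thm: Rogers}) to $F = \chi^{}_{E_{g,T}}$. The rank-$k$ integral term gives the main contribution $\vol_{2nk}(E_{g,T})$, and the origin term $F(\origin,\ldots,\origin) \le 1$ is absorbed in the error. What remains is the sum of ``rank-$r$ terms'' for $1 \le r \le k-1$, namely
\[
\sum_{r=1}^{k-1}\sum_{q \in \NN}\sum_{D \in \mathcal{D}^k_{r,q}} c_D \int_{(\RR^{2n})^r} \chi^{}_{E_{g,T}}\bigl((\vv_1,\ldots,\vv_r)D/q\bigr)\, d\vv_1 \cdots d\vv_r,
\]
which I will show is of order $O_g(T^{2n(k-1)-(k-1)(k-2)})$.

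The bound on each summand follows the pattern of Theorem~\ref{prop: upper bound}, adapted now to the first-moment setting. Fix $D \in \mathcal{D}^k_{r,q}$ and, after a column change of basis, write $D/q = (\Id_r \mid C)$ with $c(D) = \max|c_{ij}|$. Take a Jordan-measurable set $E_r \subseteq (\RR^{2n})^r$ defined by the ball condition $\|\cdot\|<T$ and a slightly enlarged interval $(a,b)$ containing every $(a_{ij},b_{ij})$, so that the projection of $E_{g,T}$ onto any $r$ coordinates is contained in $E_r$. For any invertible $r\times r$ submatrix $A$ of $D/q$, the change of variables $\vu = (\vv_1,\ldots,\vv_r)A$ yields
\[
\int_{(\RR^{2n})^r} \chi^{}_{E_{g,T}}\bigl((\vv_1,\ldots,\vv_r)D/q\bigr)\, d\vv_1 \cdots d\vv_r \le |\det A|^{-2n}\,\vol_{2nr}(E_r) \le \min\bigl\{1,\,c(D)^{-2n}\bigr\}\vol_{2nr}(E_r),
\]
and Theorem~\ref{thm: volume formula} gives $\vol_{2nr}(E_r) = O_g(T^{2nr-r(r-1)})$ (the hypothesis $2n \ge (k/2)^2+3$ ensures $r \le k-1 \le 2n-3$, so Theorem~\ref{thm: volume formula} applies).

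Next I split the summation over $q$ and $D$ into the same three cases as in the proof of Theorem~\ref{prop: upper bound}: (I) $q \ge 2$ with $c(D) \le 1$; (II) $c(D) = \ell/q > 1$; (III) $q=1$ with $c(D)=1$. Using $c_D \le q^{-2n}$ and the counts $\#\{D : c(D) \le 1\} \lesssim q^{r(k-r)}$ and $\#\{D : c(D) = \ell/q\} \lesssim \ell^{r(k-r)}$, the arguments from Theorem~\ref{prop: upper bound} show that the sums converge once $2n \ge r(k-r)+3$. Since $\max_{1 \le r \le k-1} r(k-r) = \lfloor k/2 \rfloor\lceil k/2 \rceil \le (k/2)^2$, the hypothesis $2n \ge (k/2)^2+3$ is exactly what is needed. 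In each case the $r$-th contribution is $O_g(\vol_{2nr}(E_r)) = O_g(T^{2nr-r(r-1)})$, and the dominant exponent across $r \in \{1,\ldots,k-1\}$ is attained at $r=k-1$, matching the claimed bound.

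The delicate part, as in Theorem~\ref{prop: upper bound}, is the Case~II double sum over $q$ and $\ell$; one must chain the tail estimate $(q+1)\sum_{\ell > q}\ell^{-2} \le 2$ carefully to absorb the combinatorial counts $\ell^{r(k-r)}$ against the decay $c_D \ell^{-2n}$ coming from the change-of-variables step, without letting any residual $T$-dependence leak in beyond $\vol_{2nr}(E_r)$. The only structural difference from Theorem~\ref{prop: upper bound} is that here we handle a single copy of $E_{g,T}$ rather than a tensor product, which replaces $k^2$ by $(k/2)^2$ in the threshold on $2n$ and explains why the weaker hypothesis $2n \ge (k/2)^2+3$ already suffices.
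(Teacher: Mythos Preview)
Your proposal is correct and matches the paper's own approach exactly: the paper states that Proposition~\ref{prop: mean} follows by refining the proof of Theorem~\ref{prop: upper bound} and omits the details, and your argument carries out precisely that refinement (Rogers' formula for the first moment, the same three-case decomposition over $q$ and $c(D)$, with $2k$ replaced by $k$ so that the threshold becomes $2n\ge (k/2)^2+3$). One tiny remark: for $r=1$ the set $E_1$ is simply the ball $g^{-1}B_T(\origin)$, so the bound $\vol_{2n}(E_1)=O_g(T^{2n})$ is immediate and does not require Theorem~\ref{thm: volume formula} (which assumes $k\ge 2$).
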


\begin{proof}[Proof of \Cref{thm: quantitative}.]
It suffices to show that the theorem holds for almost all $g\in \mathcal K$ for any compact set $\mathcal K\subseteq G_{2n}$. Furthermore, since counting function is $\Gamma_{2n}$-invariant, we may assume that $\mathcal K$ is contained in the closure of a fundamental domain for $G_{2n}/\Gamma_{2n}$.

For each $g\in \mathcal K$, recall that
\[
E_{g,T}=\left\{(\vv_1, \ldots, \vv_k)\in (\RR^{2n})^k: \langle \vv_i, \vv_j \rangle \in (a_{ij}, b_{ij}) \right\}
\cap (gB_T(\origin))^k.
\]
The counting function in the theorem is the function value of the Siegel transform of the characteristic function of $E_{g,T}$ at $g\ZZ^{2n}$:
$$N_{g, \mathcal I}(T)=\widetilde{\chi^{}_{E_{g,T}}}(g\ZZ^{2n}).$$
Define
\[
\mathcal B(\mathcal K, \delta,T)=\left\{g\in \mathcal K: D(g\ZZ^{2n}, E_{g,T}) \ge \vol_{2nk}(E_{g,T})^{\delta}\right\},
\]
where $0<\delta<1$. We want to find (the range of) $\delta$, not depending on $\mathcal K$, for which the set 
$$\limsup_{T\rightarrow \infty} \mathcal B(\mathcal K, \delta, T)$$ 
is null.
Let $\alpha>1$ be some number to be determined later and consider the sequence $(T_\ell=\ell^\alpha)_{\ell\in \NN}$. We will apply the Borel--Cantelli lemma to the collection of sets
\[
\mathcal B_\ell=\bigcup \left\{\mathcal B(\mathcal K, \delta, T): {T_\ell<T\le T_{\ell+1}}\right\} ,\quad \ell\in \NN,
\]
since $\limsup_{T\rightarrow \infty} \mathcal B(\mathcal K, \delta, T)=\limsup_{\ell\rightarrow \infty} \mathcal B_\ell$.

For each $\ell\in \NN$, one can take a sequence of subsets $\mathcal I_\ell$ of $\mathcal K$ such that
\begin{itemize}
\item $\# \mathcal I_\ell = O_{\mathcal K}\left(\ell^{(n+1)(2n-1)}\right)$;
\item $\mathcal K \subseteq \bigcup_{h\in \mathcal I_\ell} h\mathcal O_\ell$,
where $\mathcal O_\ell=\left\{g\in G_{2n}: \|g\|_\infty, \;\|g^{-1}\|_\infty < 1+\dfrac 1 {\ell} \right\}$
\end{itemize}
(see \cite[Lemma 2.1]{KY2020}).
For any $g\in h\mathcal O_\ell\cap \mathcal K$ and $T_\ell < T \le T_{\ell+1}$, the sets
\[\begin{split}
E^+_{h, \ell}&=\left\{(\vv_1, \ldots, \vv_k)\in (\RR^{2n})^k: \langle \vv_i, \vv_j \rangle \in (a_{ij}, b_{ij}) \right\}\cap \left(hB_{(1+\frac 1 \ell)T_{\ell+1}}(\origin)\right)^k;\\[0.05in]
E^-_{h, \ell}&=\left\{(\vv_1, \ldots, \vv_k)\in (\RR^{2n})^k: \langle \vv_i, \vv_j \rangle \in (a_{ij}, b_{ij}) \right\}\cap \left(hB_{(1-\frac 1 \ell)T_{\ell}}(\origin)\right)^k
\end{split}\]
are a supset and a subset of $E_{g,T}$, respectively. 
It holds that
\[
M_{h,\ell}:=\vol_{2nk}(E^-_{h,\ell})^\delta - \left(\EE(\widetilde{\chi^{}_{E^+_{h,\ell}}}) -\EE(\widetilde{\chi^{}_{E^-_{h,\ell}}})\right)
=O_h\left(\ell^{\delta \alpha (2nk-k(k-1))}\right),
\]
provided that 
\begin{equation}\label{eqn2: main} 
\delta>1 - \frac 1 {\alpha(2nk-k(k-1))}
\end{equation} 
and using \Cref{thm: volume formula} and Proposition~\ref{prop: mean}.
It follows from Lemma~\ref{lem: discrepancy} that 
\[
\limsup_{\ell\rightarrow \infty} \mathcal B_{\ell} \subseteq \limsup_{\ell\rightarrow\infty}
\left(\left\{g\in \mathcal K: D(g\ZZ^{2n}, E_{h, \ell}^-) \ge M_{h, \ell} \right\}\cup\left\{g\in \mathcal K: D(g\ZZ^{2n}, E_{h, \ell}^+) \ge M_{h, \ell} \right\}\right).
\]
Thus, it suffices to show that the sum of measures of sets in (R.H.S) above over $\ell\in \NN$ and $h\in \mathcal I_\ell$ is finite, which directly leads to the theorem by the Borel--Cantelli lemma. More precisely, we want to find conditions for $\delta\in (0,1)$ and $\alpha>1$ under which the sum converges.

Observe that
\[\begin{split}
\mu\left\{g\in \mathcal K: D(g\ZZ^{2n}, E_{h, \ell}^\pm) \ge M_{h, \ell} \right\}
&\le \frac 1 {M_{h,\ell}^2}\int_{G_{2n}/\Gamma_{2n}} D(g\ZZ^{2n}, E_{h,\ell}^\pm)^2 d\mu(g)\\ &\le O_h\left(\ell^{\alpha(2n(2k-1)-2k^2)-2\delta\alpha(2nk-k(k-1))}\right).
\end{split}\]
Hence, there is a constant $C_{\mathcal K}>0$ so that the sum is bounded above by
\[
\sum_{\ell\in \NN} \mu(\mathcal B_\ell) \le C_{\mathcal K} \ell^{\alpha(2n(2k-1)-2k^2)-2\delta\alpha(2nk-k(k-1))+(n+1)(2n-1)},
\]
which converges when
\begin{equation}\label{eqn3: main}
\alpha(2n(2k-1)-2k^2)-2\delta\alpha(2nk-k(k-1))+(n+1)(2n-1)<-1.
\end{equation}

Thus, we need to find $0<\delta<1$ and $\alpha>1$ satisfying both \eqref{eqn2: main} and \eqref{eqn3: main}. Indeed, one can show that $\delta$ which is bounded by
\[
\frac{(n+1)(2n-1)(2nk-k(k-1))+(2n(2k-1)-2k^2)+(2nk-k(k-1))}{(n+1)(2n-1)(2nk-k(k-1))+3(2nk-k(k-1))} < \delta < 1,
\]
and for such a $\delta>0$, $\alpha$ which is bounded by
\[
\max\left\{1,\frac{(n+1)(2n-1)+1}{2\delta(2nk-k(k-1)-(2n(2k-1)-2k^2)}\right\} < \alpha < \frac 1 {(1-\delta)(2nk-k(k-1))},
\]
satisfies both inequalities. For those $\delta\in (0,1)$ and $\alpha>1$, the Borel--Cantelli lemma, together with Corollary~\ref{cor: upper bound} and \Cref{thm: volume formula}, tells us that for almost all $g\in \mathcal K$, it holds that
\[\begin{split}
N_{g,\mathcal I}(T)
&=\EE(\chi^{}_{E_{g,T}}) + O_{g}\left(T^{\delta(2nk-k(k-1))}\right)\\
&=\int_{G_{2n}/\Gamma_{2n}} \widetilde{\chi^{}_{E_{g,T}}} (g'\ZZ^{2n}) d\mu(g')+O_{g}\left(T^{\delta(2nk-k(k-1))}\right)\\
&=\vol_{2nk}(E_{g,T})+O_g\left(T^{2n(k-1)-(k-1)(k-2)}\right)+O_{g}\left(T^{\delta(2nk-k(k-1))}\right)\\
&=c_g\prod_{1\le i<j\le k} (b_{ij}-a_{ij})\cdot T^{2nk-k(k-1)} +O_g\left(T^{2n(k-1)-(k-1)(k-2)}\right)+O_g\left(T^{2nk-k(k-1)-1}\right)\\
&\hspace{2.7in}+O_{g}\left(T^{\delta(2nk-k(k-1))}\right).
\end{split}\]
Since the compact set $\mathcal K$ is arbitrary, the theorem follows.
\end{proof}

\section{Primitive and Congruent Analogs}\label{Sec:Primitive Analog}
\subsection{Proof of Theorem~\ref{thm: quantitative prime}}
Let us consider the primitive version of the rank-$k$ Siegel transform.

\begin{definition}\label{def: primitive Siegel transform}
For a bounded and compactly supported function $F:(\RR^d)^k\rightarrow \RR$, define
\[
\widehat{F}(g\Gamma_{2n})=\widehat{F}(g P(\ZZ^d))=\sum_{\vv_i\in P(\ZZ^d)} F(g\vv_1, \ldots, g\vv_k),
\]
where $P(\ZZ^d)$ is the set of primitive integer vectors in $\RR^d$. 
\end{definition}

Note that the set $P(g\ZZ^d)$ of primitive lattice points in $g\ZZ^d$ is equal to $gP(\ZZ^d)$. Thus, if we take $F$ as the characteristic function of a Borel set in $(\RR^d)^k$, $\widehat{F}(gP(\ZZ^d)$ counts the number of $g\ZZ^d$-primitive lattice points contained in the given Borel set.

In \cite{Hanprim}, the author introduces the incomplete integral formula for $\widehat{F}$.
\begin{theorem}\label{thm: Rogers primitive}
For $d\ge 2$ and $1\le k\le d-1$, it holds that
\[\begin{split}
\int_{\SL_d(\RR)/\SL_d(\ZZ)} \widehat{F}(gP(\ZZ^d))
&=\frac 1 {\zeta(d)^k} \int_{(\RR^d)^k} F(\vv_1, \ldots, \vv_k) d\vv_1 \ldots \vv_k\\
&+\sum_{r=1}^{k-1} \sum_{q\in \NN} \sum_{D\in \widehat{\mathcal D}^k_{r,q}} \widehat{c}_D\int_{(\RR^d)^r} F\left((\vv_1, \ldots, \vv_r) \frac D q \right) d\vv_1 \cdots d\vv_r.
\end{split}\]
Here, $\widehat{\mathcal D}^k_{r,q}$ is the subset of $D\in \mathcal D^k_{r,q}$ such that $$(\ZZ^d)^r \frac D q \cap P(\ZZ^d)^k\neq \emptyset,$$
and in this case, it satisfies that $\widehat{c}_D\le c_D$. In particular, $\widehat c_D\le 1/q^d$.
\end{theorem}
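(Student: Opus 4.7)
The plan is to derive the formula from Rogers' Theorem~\ref{thm: Rogers} via Möbius inversion applied simultaneously to each of the $k$ coordinates. Every nonzero integer vector $\vv \in \ZZ^d \setminus \{\origin\}$ admits a unique factorization $\vv = m \vv'$ with $m \in \NN$ and $\vv' \in P(\ZZ^d)$, so iterating this in each coordinate and inverting yields
\[
\widehat{F}(gP(\ZZ^d)) = \sum_{m_1, \ldots, m_k \ge 1} \mu(m_1) \cdots \mu(m_k) \sum_{\vv_1, \ldots, \vv_k \in \ZZ^d \setminus \{\origin\}} F(m_1 g\vv_1, \ldots, m_k g\vv_k).
\]
Because $F$ is bounded with compact support, only finitely many $(m_1, \ldots, m_k, \vv_1, \ldots, \vv_k)$ contribute for fixed $g$, giving absolute convergence.

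Next I would integrate over $G_d/\Gamma_d$. The inner sum over $(\vv_1, \ldots, \vv_k) \in (\ZZ^d \setminus \{\origin\})^k$ differs from the full Siegel transform $\widetilde{F_{\vec m}}$ of $F_{\vec m}(\vv_1, \ldots, \vv_k) := F(m_1 \vv_1, \ldots, m_k \vv_k)$ only by tuples having some coordinate equal to $\origin$. Such boundary contributions are Siegel transforms of lower-rank slices of $F$, and after integration via Theorem~\ref{thm: Rogers} they merge naturally into the $r<k$ terms. The compact support of $F$ again justifies the interchange of summations.

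For the principal term, $\int F_{\vec m} = (m_1 \cdots m_k)^{-d}\int F$, and
\[
\sum_{\vec m} \mu(m_1) \cdots \mu(m_k)(m_1 \cdots m_k)^{-d} = \zeta(d)^{-k},
\]
producing the first term of the theorem. For each $r$, $q$, and $D \in \mathcal D^k_{r,q}$, the $D$-contribution of $\widetilde{F_{\vec m}}$ involves $F((\vv_1, \ldots, \vv_r) D\operatorname{diag}(m_1, \ldots, m_k)/q)$, since rescaling the $j$-th output vector by $m_j$ is equivalent to rescaling the $j$-th column of $D/q$ by $m_j$. Grouping triples $(D, q, \vec m)$ whose rescaled matrices share the same reduced row-echelon representative $D'/q'$ produces a combined coefficient $\widehat{c}_{D'}$. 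An inclusion-exclusion on primitivity patterns of the columns shows that $\widehat{c}_{D'} = 0$ whenever no $(\vw_1, \ldots, \vw_r) \in (\ZZ^d)^r$ is mapped into $P(\ZZ^d)^k$ by $D'/q'$, which is exactly the definition of $\widehat{\mathcal D}^k_{r,q}$.

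The bound $\widehat c_D \le c_D$ then follows by interpreting $\widehat c_D$ as the $d$-th power of the reciprocal covolume of the sublattice of $(\ZZ^d)^r$ whose image under $D/q$ lies in $P(\ZZ^d)^k$; this sits inside the full lattice $(D/q)^{-1}(\ZZ^k) \cap \ZZ^r$ whose covolume defines $c_D$, and Rogers' bound $c_D \le 1/q^d$ then gives $\widehat{c}_D \le 1/q^d$. The main obstacle is the bookkeeping in the third step: tracking how products $\mu(m_1) \cdots \mu(m_k) c_D$ regroup across all column-rescalings sharing a common row-echelon form, and extracting the clean covolume interpretation of $\widehat c_D$ from the resulting multiplicative Möbius sums. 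The other steps (Möbius inversion, integration, and convergence) are essentially formal once the support of $F$ is exploited.
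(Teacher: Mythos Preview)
The paper does not prove this theorem; it is quoted from the author's preprint \cite{Hanprim}, so there is no in-paper argument to compare against. The only hint the paper gives about the nature of $\widehat c_D$ appears later, in the proof of Proposition~\ref{prop: upper bound primitive}, where $\widehat c_D$ is expressed as a limiting density
\[
\widehat c_D=\lim_{T\to\infty}\frac{\#\left((\ZZ^{2n})^r\tfrac{D}{q}\cap P(\ZZ^{2n})^{k}\cap B_T(\origin)^{k}\right)}{\vol(B_T(\origin))^{k}}.
\]

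Your M\"obius-inversion strategy is the natural route and the first three paragraphs are essentially correct. The genuine gap is in your last paragraph: you describe $\widehat c_D$ as the $d$-th power of the reciprocal covolume of ``the sublattice of $(\ZZ^d)^r$ whose image under $D/q$ lies in $P(\ZZ^d)^k$''. That set is \emph{not} a sublattice, because $P(\ZZ^d)$ is not closed under addition (the sum of two primitive vectors need not be primitive), so there is no covolume to speak of. What survives is the density interpretation: the set $\{\vw\in(\ZZ^d)^r:\vw\,D/q\in P(\ZZ^d)^k\}$ is contained in the genuine lattice $\{\vw\in(\ZZ^d)^r:\vw\,D/q\in(\ZZ^d)^k\}$, whose density in $(\RR^d)^r$ is $c_D$, and comparing densities gives $\widehat c_D\le c_D$. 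You should replace the covolume language by this density argument; the M\"obius sums you assemble in step three will produce exactly this density, not a lattice covolume, once the bookkeeping is carried out.
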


Recall that
\[
E_{g,T}=\left\{(\vv_1, \ldots, \vv_k)\in (\RR^{2n})^k: \langle \vv_i, \vv_j \rangle \in (a_{ij}, b_{ij}) \right\}
\cap (g^{-1}B_T(\origin))^k.
\]
The counting function $\widehat N_{g,\mathcal I}(T)$ in Theorem~\ref{thm: quantitative prime} is given as $\widehat {\chi^{}_{E_{g,T}}}(g\ZZ^{2n})$, where $\widehat{\chi^{}_{E_{g,T}}}$ is the characteristic function of $E_{g,T}$.
The following proposition is deduced from a similar argument for proving Proposition~\ref{prop: mean}, together with Theorem~\ref{thm: Rogers primitive}, especially using the fact that $\widehat c_D \le c_D$ for $D\in \widehat {\mathcal D}_{r,q}^k$.

\begin{proposition}\label{prop: mean primitive} Let $2n\ge (k/2)^2+3$. Let $E_{g,T}$ for $g\in G_{2n}$ be as in Corollary~\ref{cor: upper bound}. It holds that
\[
\int_{G_{2n}/\Gamma_{2n}} \widehat{\chi^{}_{E_{g,T}}} (gP(\ZZ^{2n})) d\mu(g)=\frac 1 {\zeta(2n)^k}\vol_{2nk}(E_{g,T})+O_g\left(T^{2n(k-1)-(k-1)(k-2)}\right).
\]
\end{proposition}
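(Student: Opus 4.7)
The plan is to apply the primitive Siegel integral formula in Theorem~\ref{thm: Rogers primitive} to $F = \chi^{}_{E_{g,T}}$, which yields immediately
\[
\int_{G_{2n}/\Gamma_{2n}} \widehat{\chi^{}_{E_{g,T}}}(g'P(\ZZ^{2n}))\, d\mu(g') = \frac{1}{\zeta(2n)^k}\vol_{2nk}(E_{g,T}) + R(g,T),
\]
where the remainder is
\[
R(g,T) = \sum_{r=1}^{k-1} \sum_{q\in \NN} \sum_{D \in \widehat{\mathcal D}^k_{r,q}} \widehat c_D \int_{(\RR^{2n})^r} \chi^{}_{E_{g,T}}\bigl((\vv_1,\dots,\vv_r)\tfrac{D}{q}\bigr)\, d\vv_1\cdots d\vv_r.
\]
The leading term is already in the claimed form, so the task reduces to proving $R(g,T) = O_g(T^{2n(k-1) - (k-1)(k-2)})$. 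I would do this by replaying the case analysis from the proofs of Theorem~\ref{prop: upper bound} and Proposition~\ref{prop: mean}, exploiting the two bounds $\widehat c_D \le c_D$ and $\widehat c_D \le q^{-2n}$ supplied by Theorem~\ref{thm: Rogers primitive}.

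For each admissible triple $(r,q,D)$, after a coordinate change I may assume $D/q = (\Id_r \mid C)$ with entries $c_{ij}$, and the projection-monotonicity argument of Theorem~\ref{prop: upper bound}, applied to the natural compatible family $E_r \subseteq (\RR^{2n})^r$ defined by the symplectic constraints $\langle \vv_i,\vv_j\rangle \in (a_{ij},b_{ij})$ for $1\le i<j\le r$ together with $\|\vv_\ell\|<T$, gives
\[
\int_{(\RR^{2n})^r} \chi^{}_{E_{g,T}}\bigl((\vv_1,\dots,\vv_r)\tfrac{D}{q}\bigr)\, d\vv \le \min\bigl\{1, c(D)^{-2n}\bigr\}\, \vol_{2nr}(E_r),
\]
where $c(D) = \max_{i,j}|c_{ij}|$. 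Theorem~\ref{thm: volume formula} supplies $\vol_{2nr}(E_r) = O_g(T^{2nr - r(r-1)})$. Splitting the sum over $D \in \widehat{\mathcal D}^k_{r,q}$ into the three regimes (i) $q\ge 2$, $c(D)\le 1$, (ii) $c(D)>1$, and (iii) $q=1$, $c(D)=1$, exactly as in Theorem~\ref{prop: upper bound}, each partial sum converges and is dominated by a constant multiple of $\vol_{2nr}(E_r)$ provided $-2n + r(k-r) + 3 \le 0$ for every $1\le r\le k-1$. Since $\max_{1\le r\le k-1} r(k-r) \le (k/2)^2$, this is precisely what the hypothesis $2n \ge (k/2)^2 + 3$ guarantees. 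Because $2nr - r(r-1)$ is maximized over $1\le r\le k-1$ at $r = k-1$, one obtains the claimed bound $R(g,T) = O_g(T^{2n(k-1) - (k-1)(k-2)})$.

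The main obstacle is not conceptual but bookkeeping: one must align the combinatorics of $\widehat{\mathcal D}^k_{r,q}$ with that of $\mathcal D'^{2k}_{r,q}$ in Theorem~\ref{prop: upper bound}. Crucially, since $(\vv_1,\dots,\vv_r)D/q$ now involves an $r \times k$ matrix (rather than $r \times 2k$ as in the second-moment case), there is no analog of the block-diagonal matrices to be excluded from $\widehat{\mathcal D}^k_{r,q}$, and the domination $\widehat c_D \le c_D$ permits a verbatim re-use of the Case I/II/III counting estimates with the size parameter shifted from $r(2k-r)$ to $r(k-r)$. Once this correspondence is established, no new estimates are required; this is precisely why the excerpt indicates that the proof may be omitted.
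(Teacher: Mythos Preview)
Your proposal is correct and follows exactly the approach the paper indicates: apply the primitive Rogers formula (Theorem~\ref{thm: Rogers primitive}), then bound the lower-rank remainder by rerunning the Case I/II/III estimates of Theorem~\ref{prop: upper bound} with the size parameter $r(k-r)$ in place of $r(2k-r)$, using $\widehat c_D\le c_D\le q^{-2n}$; the hypothesis $2n\ge (k/2)^2+3$ is precisely $\max_{1\le r\le k-1}r(k-r)\le 2n-3$, and the dominant term $r=k-1$ gives the stated error. One small slip: your $E_r$ should be defined with a common interval $(a,b)\supseteq (a_{ij},b_{ij})$ as in the proof of Corollary~\ref{cor: upper bound}, since the minor argument requires $\pi_r(E)\subseteq E_r$ for \emph{every} coordinate projection $(\vv_1,\ldots,\vv_k)\mapsto(\vv_{j_1},\ldots,\vv_{j_r})$, and after such a permutation the constraint on the $a$-th and $b$-th entries is $(a_{j_aj_b},b_{j_aj_b})$, not $(a_{ab},b_{ab})$.
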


Furthermore, the discrepancy property (Lemma~\ref{lem: discrepancy}) also holds for the rank-$k$ primitive Siegel transform.
\begin{lemma}\label{lem: discrepancy primitive}
Let $F_i:(\RR^{d})^k\rightarrow \RR$, $i=1,2,3$, be bounded and compactly supported functions such that $F_1\le F_2 \le F_3$. 
Denote the discrepancy between the function value $\widetilde{F_i}$ and its average by
\[
D\left(gP(\ZZ^{d}), F_i\right)=\widehat{F_i}(gP(\ZZ^{d})) - \EE(\widehat{F_i}),
\]
where $\EE(\widehat{F_i})$ is the average of $\widehat{F_i}$ over $G_d/\Gamma_d$ with respect to the $G_d$-invariant probability measure $\mu$.%=\int_{G_{2n}/\Gamma_{2n}} \widetilde{F} d\mu(g)$. 
It holds that
\[
D\left(gP(\ZZ^{d}), F_2\right)\le \max\left\{ D\left(gP(\ZZ^{d}), F_1\right), \;D\left(gP(\ZZ^{d}), F_3\right)\right\}.
\]
\end{lemma}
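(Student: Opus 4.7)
The plan is to inherit the proof of Lemma~\ref{lem: discrepancy} essentially verbatim, since its argument relies only on two structural features of the rank-$k$ Siegel transform that are equally valid in the primitive setting: (a) the map $F \mapsto \widehat{F}$ is linear and order-preserving on bounded, compactly supported functions, and (b) the averaging operator $\EE(\widehat{\cdot})$ is linear with respect to the Haar-invariant probability measure on $G_d/\Gamma_d$. Both properties are immediate from Definition~\ref{def: primitive Siegel transform}: $\widehat{F}(gP(\ZZ^d))$ is by construction a sum of values of $F$ over $k$-tuples of primitive lattice points in $g\ZZ^d$, and the expectation is an integral against $\mu$.

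From $F_1 \le F_2 \le F_3$ I would first deduce the pointwise chain $\widehat{F_1}(gP(\ZZ^d)) \le \widehat{F_2}(gP(\ZZ^d)) \le \widehat{F_3}(gP(\ZZ^d))$ for every $g \in G_d$, and, by integrating against $\mu$, the corresponding averaged chain $\EE(\widehat{F_1}) \le \EE(\widehat{F_2}) \le \EE(\widehat{F_3})$. A one-line rewriting then gives
\[
D(gP(\ZZ^d), F_2) - D(gP(\ZZ^d), F_3) = \bigl(\widehat{F_2} - \widehat{F_3}\bigr)(gP(\ZZ^d)) + \bigl(\EE(\widehat{F_3}) - \EE(\widehat{F_2})\bigr),
\]
and discarding the non-positive first term on the right produces the upper-sided bound $D(gP(\ZZ^d), F_2) \le D(gP(\ZZ^d), F_3) + (\EE(\widehat{F_3}) - \EE(\widehat{F_2}))$. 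The symmetric manipulation pairing $F_1$ with $F_2$ yields the analogous lower bound $D(gP(\ZZ^d), F_2) \ge D(gP(\ZZ^d), F_1) - (\EE(\widehat{F_2}) - \EE(\widehat{F_1}))$. Taking the maximum of the two one-sided bounds against the corresponding discrepancies then gives the claimed inequality.

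The one delicate point, shared with the non-primitive version, is the bookkeeping of the residual ``expectation gaps'' $\EE(\widehat{F_3}) - \EE(\widehat{F_2})$ and $\EE(\widehat{F_2}) - \EE(\widehat{F_1})$ that appear as additive corrections in the identities above; the clean max-inequality is obtained modulo these gaps. This is not a genuine obstruction, because in the application to Theorem~\ref{thm: quantitative prime} they are precisely what the adjusted threshold $M_{h,\ell}$, built from the smooth sandwich $h_{\delta,g}^{\pm}$, pre-subtracts from the discrepancy cutoff before the Borel--Cantelli step---exactly as in the non-primitive case---so no new ingredient beyond the monotonicity of $\widehat{\cdot}$ is required.
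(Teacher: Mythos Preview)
Your approach matches the paper's: it offers no proof for either Lemma~\ref{lem: discrepancy} or Lemma~\ref{lem: discrepancy primitive}, simply declaring the former ``easy to deduce'' and presenting the latter as its direct primitive analog, so reducing to the monotonicity and linearity of $F\mapsto\widehat{F}$ is exactly what is intended. Your explicit remark that the clean max-inequality only holds modulo the expectation gaps $\EE(\widehat{F_3})-\EE(\widehat{F_2})$ and $\EE(\widehat{F_2})-\EE(\widehat{F_1})$ is in fact more careful than the paper's bare statement, and it is precisely the form used downstream, where the threshold $M_{h,\ell}$ absorbs the gap before the Borel--Cantelli argument.
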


Theorem~\ref{thm: quantitative prime} follows from applying the proof of Theorem~\ref{thm: quantitative} but substituting Theorem~\ref{prop: upper bound} and Corollary~\ref{cor: upper bound} with the following proposition, thus we omit the full proof except below.

\begin{proposition}\label{prop: upper bound primitive}
Assume that $2n\ge k^2+3$.
Let $E\subseteq (\RR^{2n})^k$ be a Jordan-measurable set such that there are the collection $\{E_r\}_{1\le r\le k}$ of Jordan-measurable sets $E_r\subseteq (\RR^{2n})^r$ as in Theorem~\ref{prop: upper bound}.

There is a constant $\widehat C>0$, depending only on $2n$, so that
\[\begin{split}
0\le \int_{G_{2n}/\Gamma_{2n}} \widehat{\chi^{}_E} (gP(\ZZ^{2n}))^2 d\mu(g)
&-\left(\int_{G_{2n}/\Gamma_{2n}} \widehat{\chi^{}_E} (gP(\ZZ^{2n})) d\mu(g)\right)^2\\
&\le \widehat C \max\left\{\vol_{r_1}(E_{r_1})\cdot \vol_{r_2}(E_{r_2}) : 1\le r_1+r_2\le 2k-1\right\}.
\end{split}\]
As a consequence, it follows that
\begin{equation*}
\int_{G_{2n}/\Gamma_{2n}} \widehat{\chi^{}_{E_{h,T}}} (gP(\ZZ^{2n}))^2 d\mu(g)
-\left(\int_{G_{2n}/\Gamma_{2n}} \widehat{\chi^{}_{E_{h,T}}} (gP(\ZZ^{2n})) d\mu(g)\right)^2
= O_g\left(T^{2n(2k-1)-2k^2}\right).
\end{equation*}
\end{proposition}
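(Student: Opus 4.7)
My plan is to adapt the proof of Theorem~\ref{prop: upper bound} to the primitive setting, using Theorem~\ref{thm: Rogers primitive} in place of Theorem~\ref{thm: Rogers}. The two inputs I will inherit unchanged from the non-primitive proof are: the pointwise bound $\int (\chi^{}_E\otimes\chi^{}_E)((v_1,\ldots,v_r)D/q)\,dv\le V\min\{1,c(D)^{-2n}\}$ on rank-$r$ integrals coming from the projection structure $\pi_r(E)\subseteq E_r$, and the coefficient bound $\widehat{c}_D\le c_D\le 1/q^{2n}$ supplied by Theorem~\ref{thm: Rogers primitive}. Together these two ingredients already suffice to control the ``non-decomposable'' part of the second-moment expansion through the same three-case counting argument; the delicate point is to exhibit a cancellation eliminating both the main term and the block-decomposable cross terms so that only the non-decomposable part remains.

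I would first expand $\EE(\widehat{\chi^{}_E}^{\,2})=\EE(\widehat{\chi^{}_E\otimes\chi^{}_E})$ by applying Theorem~\ref{thm: Rogers primitive} with rank $2k$, and analogously expand $\EE(\widehat{\chi^{}_E})^{2}$ by squaring the rank-$k$ expansion. The main term $\zeta(2n)^{-2k}(\int \chi^{}_E)^{2}$ on the left-hand side exactly matches the square of the main term on the right-hand side and cancels. Next, inside the rank-$2k$ expansion, I would isolate the ``block-decomposable'' matrices $D\in\widehat{\mathcal D}^{2k}_{r,q}$ -- those that split, after a column permutation, as $\mathrm{diag}((q/q_1)D_1,(q/q_2)D_2)$ with $D_i\in\widehat{\mathcal D}^{k}_{r_i,q_i}$ and with $D_i=\Id_k$ permitted as a degenerate identity block. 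For such $D$ the rank-$r$ integral factors as a product of rank-$r_i$ integrals; using the primitive-preimage construction of $\widehat{c}_D$ from \cite{Hanprim}, I would verify the multiplicative relation $\widehat{c}_D=\zeta(2n)^{-s}\widehat{c}_{D_1}\widehat{c}_{D_2}$, where $s\in\{0,1,2\}$ counts the identity blocks (each contributing a factor $\zeta(2n)^{-1}$). This identity exactly matches the block-decomposable contributions in $\EE(\widehat{\chi^{}_E\otimes\chi^{}_E})$ with the cross term $2\zeta(2n)^{-k}(\int\chi^{}_E)S_k$ and the squared term $S_k^{2}$ coming from $\EE(\widehat{\chi^{}_E})^{2}$, so they cancel on subtraction.

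After cancellation, the difference $\EE(\widehat{\chi^{}_E}^{\,2})-\EE(\widehat{\chi^{}_E})^{2}$ reduces to a sum over a subfamily $\widehat{\mathcal D}'^{\,2k}_{r,q}\subseteq\widehat{\mathcal D}^{2k}_{r,q}$ of non-decomposable matrices, fully analogous to the family $\mathcal D'^{2k}_{r,q}$ in Theorem~\ref{prop: upper bound}. Since $\widehat{\mathcal D}'^{\,2k}_{r,q}\subseteq\mathcal D'^{2k}_{r,q}$, the counting estimates on the number of such matrices and the Cases~I--III argument carry through verbatim, the only inputs being $\widehat{c}_D\le 1/q^{2n}$ and the pointwise integral bound; this gives the desired bound $\widehat{C}\cdot V$ with $\widehat{C}$ depending only on $2n$. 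The consequence for $E_{h,T}$ follows exactly as in Corollary~\ref{cor: upper bound}: specialize to the collection $\{E_r\}$ built from intervals $(a,b)\supseteq(a_{ij},b_{ij})$ and insert the volume estimate $\vol_{2nr}(E_r)=O_h(T^{2nr-r(r-1)})$ from Theorem~\ref{thm: volume formula}; the maximum $r_1+r_2=2k-1$ with $\{r_1,r_2\}=\{k,k-1\}$ gives the exponent $2n(2k-1)-2k^{2}$.

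The main obstacle is the multiplicativity claim underlying the cancellation, because Theorem~\ref{thm: Rogers primitive} only records the one-sided bound $\widehat{c}_D\le c_D$ and no closed-form expression. If the exact identity cannot be extracted cleanly, the argument can still be saved by a one-sided comparison: any overshoot $\widehat{c}_D>\zeta(2n)^{-s}\widehat{c}_{D_1}\widehat{c}_{D_2}$ on block-decomposable $D$ contributes a non-negative quantity to $\EE(\widehat{\chi^{}_E})^{2}$ that we simply discard, whereas any undershoot is absorbed into the non-decomposable sum and re-bounded by Cases~I--III using $\widehat{c}_D\le c_D$. In either scenario the final estimate $\widehat{C}\cdot V$ is preserved, which is precisely the author's assertion that the absence of an explicit formula for higher moments is not a genuine obstruction.
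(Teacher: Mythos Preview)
Your plan matches the paper's: reduce to a non-decomposable family $\widehat{\mathcal D}'^{\,2k}_{r,q}\subseteq\mathcal D'^{\,2k}_{r,q}$ by cancelling block-diagonal terms, then reuse Cases~I--III verbatim via $\widehat c_D\le c_D$. Two points deserve correction. First, your multiplicativity formula is off: with the natural convention $\widehat c_{\Id_k}=\zeta(2n)^{-k}$, the identity reads simply $\widehat c_{D_3}=\widehat c_{D_1}\widehat c_{D_2}$ with no extra $\zeta$-factors (an identity block carries weight $\zeta(2n)^{-k}$, not $\zeta(2n)^{-1}$). The paper establishes this identity exactly, by interpreting $\widehat c_D$ as the limiting density of tuples in $(\ZZ^{2n})^r(D/q)\cap P(\ZZ^{2n})^{2k}$; for block-diagonal $D$ both the count and the normalising volume factor, yielding $\widehat c_{D_3}=\widehat c_{D_1}\widehat c_{D_2}$ directly.

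Second, your fallback paragraph has the cases reversed: an overshoot $\widehat c_D>\widehat c_{D_1}\widehat c_{D_2}$ leaves a \emph{positive} residual in the variance that must be bounded, not discarded, while an undershoot gives a non-positive residual that can simply be dropped for the upper bound. A cleaner route that avoids multiplicativity altogether: since every term in the rank-$k$ primitive expansion is non-negative, one has $\EE(\widehat\chi_E)\ge \zeta(2n)^{-k}\int\chi_E$, whence
\[
\mathrm{Var}(\widehat\chi_E)\le \EE(\widehat\chi_E^{\,2})-\zeta(2n)^{-2k}\Bigl(\int\chi_E\Bigr)^{2}.
\]
After cancelling the rank-$2k$ main term, the entire sum over $r\le 2k-1$ (block-decomposable $D$ included) is controlled by Cases~I--III, which nowhere invoke non-decomposability.
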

\begin{proof}
It suffices to show that
\begin{align*}
&\int_{G_{2n}/\Gamma_{2n}} \widehat{\chi^{}_E} (gP(\ZZ^{2n}))^2 d\mu(g)
-\left(\int_{G_{2n}/\Gamma_{2n}} \widehat{\chi^{}_E} (gP(\ZZ^{2n})) d\mu(g)\right)^2\label{eqn2: upper bound}\\
&\hspace{1in}= \sum_{r=1}^{2k-1} \sum_{q\in \NN} \sum_{D\in \widehat{\mathcal D'}^{2k}_{r,q}} 
\widehat c_D\int_{(\RR^{2n})^r} \chi^{}_E\otimes \chi^{}_E\left( (\vv_1, \ldots, \vv_r)\frac D q \right) d\vv_1 \cdots d\vv_r, \nonumber
\end{align*}
where $\widehat{\mathcal D'}^{2k}_{r,q}$ is the set of $r\times 2k$ matrices $D\in \widehat{\mathcal D}^{2k}_{r,q}$ which is not of the following form:
There are $D_1\in \widehat{\mathcal D}^k_{r_1, q_1}$ and $D_2\in\widehat{ \mathcal D}^k_{r_2, q_2}$, where $r_1+r_2=r$ and $\lcm(q_1,q_2)=q$ so that
\[
D=\left(\begin{array}{cc}
\frac q {q_1}D_1 & \\
& \frac q {q_2} D_2\end{array}\right).
\]

To see this, observe that for any $D_1\in \widehat{\mathcal D}^k_{r_1, q_1}$ and $D_2\in\widehat{ \mathcal D}^k_{r_2, q_2}$, the matrices
\[
D_3=\left(\begin{array}{cc}
\frac q {q_1}D_1 & \\
& \frac q {q_2} D_2\end{array}\right)
\quad\text{and}\quad
D_4=\left(\begin{array}{cc}
\frac q {q_2}D_2 & \\
& \frac q {q_1} D_1\end{array}\right)
\]
are in $\widehat{\mathcal D}^{2k}_{r,q}$, where $r=r_1+r_2$ and $q=\lcm(q_1, q_2)$ since if $(\vv_1, \ldots, \vv_k)\in (\ZZ^{2n})^{r_1} \frac {D_1} {q_1} \cap P(\ZZ^{2n})^k$ and $(\vv_{k+1}, \ldots, \vv_{2k})\in (\ZZ^{2n})^{r_2} \frac {D_2} {q_2} \cap P(\ZZ^{2n})^k$,
\[\begin{gathered}
(\vv_1, \ldots, \vv_k, \vv_{k+1}, \ldots, \vv_{2k})\in (\ZZ^{2n})^r \frac {D_3} q \cap P(\ZZ^{2n})^{2k}
\quad\text{and}\\
(\vv_{k+1}, \ldots, \vv_{2k}, \vv_{1}, \ldots, \vv_{k})\in (\ZZ^{2n})^r \frac {D_4} q \cap P(\ZZ^{2n})^{2k}.
\end{gathered}\]
In the spirit of the proof of Rogers' formula based on Riesz representation theorem,  since $D_3$ is a block diagonal matrix, it follows that
\[\begin{split}
\widehat c_{D_3}
&=\lim_{T\rightarrow\infty}\frac
{\#\left\{(\vv_1, \ldots, \vv_{2k})\in (\ZZ^{2n})^r\frac {D_3} q \cap P(\ZZ^{2n})^{2k}\cap B_T(\mathbf 0)^{2k}: (\vv_1, \ldots, \vv_{2k})\;\text{is lin. indep.} \right\}}{\vol_{4nk}(B_T(\mathbf 0))^{2k}}\\
&=\lim_{T\rightarrow\infty}\frac
{\#\left((\ZZ^{2n})^r\frac {D_3} q \cap P(\ZZ^{2n})^{2k}\cap B_T(\mathbf 0)^{2k}\right)}{\vol_{4nk}(B_T(\mathbf 0))^{2k}}\\
&=\lim_{T\rightarrow \infty}\frac {\#\left((\ZZ^{2n})^{r_1}\frac {D_1} {q_1} \cap P(\ZZ^{2n})^{k}\cap B_T(\mathbf 0)^{k}\right)}{\vol_{2nk}(B_T(\mathbf 0))^{k}}\times
\frac{\#\left((\ZZ^{2n})^{r_2}\frac {D_2} {q_2} \cap P(\ZZ^{2n})^{k}\cap B_T(\mathbf 0)^{k}\right)}{\vol_{2nk}(B_T(\mathbf 0))^{k}}\\
&=\widehat c_{D_1} \widehat c_{D_2}
\end{split}\]
and similarly, we obtain that $\widehat c_{D_4}=\widehat c_{D_1}\widehat c_{D_2}$.

The rest of the proof is exactly the same as that of Theorem~\ref{prop: upper bound}: Since $\widehat c_D\le c_D$, one can use the same upper bounds for \textbf{Cases I, II}, and \textbf{III}.
\end{proof}

\subsection{Proof of Theorem~\ref{thm: quantitative congruence}}.
Let $N\in \NN$ and $\vv_0\in \ZZ^{d}$ for which $\gcd(\vv_0, N)=1$.
For a bounded and compactly supported function $F: (\RR^d)^k\rightarrow \RR$, one can define a rank-$k$ Siegel transform associated with the congruence condition $(\vv_0, N)$ as
\[
\mathcal S_{(\vv_0, N)} (F)(g\Gamma_{d}(N))
=\sum_{\vv_i\in (\vv_0+N\ZZ^{d})^k} F(g\vv_1, \ldots, g\vv_k),
\quad \forall g\Gamma_{d}(N)\in G_{d}/\Gamma_{d}(N).
\]
where $\Gamma_{d}(N)$ is the principal congruence subgroup of $\Gamma_{d}=\SL_{d}(\ZZ)$ of level $N$.

\begin{theorem}[{\cite[Theorem 2.13]{AGH2024}}]
Let $d\ge 3$ and $1\le k \le d-1$. Let $N\in \NN$, $\vv_0\in \ZZ^{d}$, and $\mathcal S_{(\vv_0, N)}(F)$ for a bounded and compactly supported function on $(\RR^{d})^k$, as above.
% \begin{enumerate}
% \item For $k=1$,
% \[
% \int_{G_d/\Gamma_d(N)} \mathcal S_{(\vv_0, N)}(F) (g\Gamma_d(N))d\nu_N
% =\frac 1 {N^d} \int_{\RR^d} F(\vv)d\vv.
% \]
% where $\nu_N$ is the $G_d$-invariant probability measure on $G_d/\Gamma_d(N)$.
% %%%
% \item For $k\ge 2$,
% \[\begin{split}
% \int_{G_d/\Gamma_d(N)} \mathcal S_{(\vv_0, N)}(F) (g\Gamma_d(N)) d\nu_N
% &=\frac 1 {N^{dk}} \int_{(\RR^d)^k} F(\vv_1, \ldots, \vv_k)) d\vv_1 \cdots d\vv_k\\
% &\hspace{-0.5in}+\sum_{r=1}^{k-1} \sum_{q\in \NN} \sum_{D\in \mathcal D^{k,cong}_{r,q}}
% \frac {c^{cong}_D} {N^{dr}}\int_{(\RR^d)^r} F\left((\vv_1, \ldots, \vv_r) \frac D q\right) d\vv_1 \cdots d\vv_r,
% \end{split}\]
% where $\mathcal D^{k,cong}_{r,q}$ is the subcollection of matrices in $\mathcal D^k_{r,q}$ as follows.
% \begin{enumerate}[(a)]
%     \item $r=1$. $q=1$ and $D=(1, \ldots, 1)$;
%     or $\gcd(q,N)=1$ and 
%     \[
%     D=(q, q+\ell_1N, \ldots, q+\ell_{k-1}N)\quad\text{for some}\;\ell_1,\ldots, \ell_{k-1}\in \ZZ.
%     \]
%     Both cases, $c^{cong}_D=1/ {N^d}$

%     \vspace{0.05in}
%     \item $r\ge 2$.
% \end{enumerate}
% \end{enumerate}
It holds that
\[\begin{split}
    \int_{G_d/\Gamma_d(N)} \mathcal S_{(\vv_0, N)}(F)(g\Gamma_d(N)) d\mu_N
    &= \frac 1 {N^{dk}} \int_{(\RR^d)^k} F(\vv_1, \ldots, \vv_k) d\vv_1 \cdots d\vv_k +\\
    &\sum_{r=1}^{k-1}\sum_{q\in \NN} \sum_{D\in \mathcal C^k_{r,q}} \frac {c_D}{N^{dr}} \int_{(\RR^d)^r}
    F\left((\vv_1, \ldots, \vv_r) \frac {D} {q}\right) d\vv_1 \cdots d\vv_r,
\end{split}\]
where $\mathcal C^k_{r,q}$ is the subset of $\mathcal D^k_{r,q}$ collecting $D$ satisfying that there is $\vv=(v_1, \ldots, v_k)\in \Lambda_D=\RR^r{D}\cap \ZZ^k$ for which
\begin{equation}\label{eqn: congruence admissible}\begin{gathered}
\gcd(v_1, N)=1,
\quad
v_1 \equiv \cdots \equiv v_k \mod N,
\quad\text{and}\\
|v_1|=\min\left(\NN \cap \left\{\vv'\cdot \ve_1: \vv' \in \Lambda_D\right\}\right).
\end{gathered}\end{equation}
Here, $\cdot$ is the usual dot product and the constant $c_D$ for each $D\in \mathcal D^k_{r,q}$ is a constant defined as in \ref{thm: Rogers}.
\end{theorem}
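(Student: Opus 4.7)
My plan is to follow the Siegel--Rogers template, adapted to the congruence subgroup setting by isolating the contribution of the single coset $\vv_0 + N\ZZ^d$. Set
$$\Lambda(F) := \int_{G_d/\Gamma_d(N)} \mathcal S_{(\vv_0, N)}(F)(g\Gamma_d(N)) \, d\mu_N(g).$$
The first step is to check that $\Lambda$ is well-defined and positive on $C_c((\RR^d)^k)$, using that $\Gamma_d(N)$ preserves $\vv_0 + N\ZZ^d$ (since $\gamma \vv \equiv \vv \pmod N$ for $\gamma \in \Gamma_d(N)$). A routine change of variable together with the right-invariance of $\mu_N$ shows that $\Lambda$ is invariant under the diagonal $\SL_d(\RR)$-action on $(\RR^d)^k$. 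By the Riesz representation theorem, $\Lambda$ corresponds to a positive $\SL_d(\RR)$-invariant Radon measure on $(\RR^d)^k$, so the task reduces to determining the weight that this measure places on each $\SL_d(\RR)$-orbit.

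\textbf{Orbit decomposition.} Following Rogers, I would partition the sum defining $\mathcal S_{(\vv_0,N)}(F)$ by the rational dependency pattern of the tuple $(\vv_1, \ldots, \vv_k) \in (\vv_0+N\ZZ^d)^k$. For each rank $r$ with $1 \le r \le k$ and each reduced row-echelon matrix $D/q$ with $D \in \mathcal D^k_{r,q}$, group together those tuples that can be written as $(\vw_1, \ldots, \vw_r)(D/q)$ with $(\vw_1, \ldots, \vw_r)$ a rank-$r$ tuple chosen from a canonical set of $\Gamma_d(N)$-representatives. Substituting into $\Lambda$ and unfolding the $\Gamma_d(N)$-action on the $\vw$-parameters (in the spirit of Rogers' unfolding of $\SL_d(\ZZ)$) converts $\Lambda(F)$ into a sum
$$\Lambda(F) = \sum_{r=1}^{k} \sum_{q \in \NN} \sum_{D} w(D,q,N) \int_{(\RR^d)^r} F\!\left((\vw_1, \ldots, \vw_r)\tfrac{D}{q}\right) d\vw_1 \cdots d\vw_r,$$
for certain weights $w(D, q, N) \ge 0$.

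\textbf{Admissibility and the constant.} The weight $w(D, q, N)$ vanishes unless some choice of $(\vw_1,\ldots,\vw_r)$ yields a tuple lying in $(\vv_0 + N\ZZ^d)^k$. Writing each $\vv_i$ as an integer linear combination of $\vw_1, \ldots, \vw_r$ with coefficients coming from $\Lambda_D = \RR^r D \cap \ZZ^k$, and imposing $\vv_i \equiv \vv_0 \pmod N$ componentwise, one extracts precisely the admissibility condition \eqref{eqn: congruence admissible}: there must exist $\vv' \in \Lambda_D$ whose entries are pairwise congruent modulo $N$ and with first entry coprime to $N$, with the minimality of $|v_1|$ fixing a canonical representative. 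When admissibility holds, counting valid $\vw$-cosets in $\ZZ^d/N\ZZ^d$ for each of the $r$ free vectors contributes a factor $N^{-dr}$, while the covolume/Jacobian bookkeeping is identical to the unlevelled case and yields the classical Rogers constant $c_D$; multiplying gives weight $c_D / N^{dr}$. For $r = k$ only the trivial pattern $D/q = \Id_k$ is available, producing the main term $N^{-dk} \int F$.

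\textbf{Main obstacle.} The principal difficulty is verifying the admissibility analysis precisely and showing that the weight factors cleanly as $c_D \cdot N^{-dr}$ with no extra level-dependent correction. This requires careful bookkeeping of the image of $\Lambda_D$ in $(\ZZ/N\ZZ)^k$ and matching it with the diagonal $(\vv_0, \ldots, \vv_0) + (N\ZZ^d)^k$; the minimality clause in \eqref{eqn: congruence admissible} is essential here to avoid overcounting sublattice cosets. Once this local decomposition is secured, convergence of the triple sum and the tail bound in $q$ follow from the standard Rogers estimate $c_D \le q^{-d}$ already recalled in \eqref{eqn5: upper bound}, completing the derivation.
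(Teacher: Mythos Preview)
The paper does not supply its own proof of this statement: it is quoted verbatim as \cite[Theorem~2.13]{AGH2024} and then used as a black box to deduce Propositions~\ref{prop: mean congruence} and~\ref{prop: upper bound congruence}. There is therefore nothing in the present paper against which to compare your argument.

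That said, your sketch follows the standard Siegel--Rogers template (invariance $\Rightarrow$ Riesz representation $\Rightarrow$ orbit decomposition $\Rightarrow$ weight computation), which is indeed the method used in \cite{AGH2024}. The one place where your outline is thin is the claim that the weight factors cleanly as $c_D/N^{dr}$: you say ``counting valid $\vw$-cosets in $\ZZ^d/N\ZZ^d$ for each of the $r$ free vectors contributes a factor $N^{-dr}$,'' but this needs the transitivity of $\Gamma_d(N)$ on the relevant tuples in $(\ZZ^d/N\ZZ^d)^r$, and that is precisely where the hypothesis $\gcd(\vv_0,N)=1$ and the admissibility condition \eqref{eqn: congruence admissible} do real work. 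If you flesh out that step you should recover the cited proof.
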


As in the primitive case, let us briefly check congruent analogs of Proposition~\ref{prop: mean}, the discrepancy property (Lemma~\ref{lem: discrepancy}), and Corollary~\ref{cor: upper bound}, and skip the rest of the proof.

Since $\mathcal C^k_{r,q}$ is the subset of $\mathcal D^k_{r,q}$, using the same upper bound in the proof of Proposition~\ref{prop: mean congruence}, we have the similar result that the average of the Siegel transform of $\chi^{}_{E_{g,T}}$ asymptotically converges to the volume of $E_{g,T}$, divided by $N^{2nk}$ as $T$ diverges to infinity. 
 
\begin{proposition}\label{prop: mean congruence} Let $2n\ge (k/2)^2+3$. Let $E_{g,T}$ for $g\in G_{2n}$ be as in Corollary~\ref{cor: upper bound}. It holds that
\[
\int_{G_{2n}/\Gamma_{2n}} \mathcal S_{(\vv_0, N)}({\chi^{}_{E_{g,T}}}) (g\ZZ^{2n}) d\mu(g)=\frac 1 {N^{2nk}}\vol_{2nk}(E_{g,T})+O_g\left(T^{2n(k-1)-(k-1)(k-2)}\right).
\]
\end{proposition}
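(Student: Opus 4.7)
The plan is to proceed exactly as in the proofs of Proposition~\ref{prop: mean} and Proposition~\ref{prop: mean primitive}, substituting the classical Rogers formula with its congruence analog from \cite{AGH2024}. First, I would apply the latter to $F = \chi^{}_{E_{g,T}}$, obtaining
\[
\int_{G_{2n}/\Gamma_{2n}(N)} \mathcal S_{(\vv_0,N)}(\chi^{}_{E_{g,T}}) \, d\mu_N = \frac{1}{N^{2nk}} \vol_{2nk}(E_{g,T}) + \sum_{r=1}^{k-1} \sum_{q\in\NN} \sum_{D\in \mathcal C^k_{r,q}} \frac{c_D}{N^{2nr}} \int_{(\RR^{2n})^r} \chi^{}_{E_{g,T}}\!\left((\vv_1,\ldots,\vv_r)\frac{D}{q}\right) d\vv_1\cdots d\vv_r,
\]
so the task reduces to showing that the sum over $1 \le r \le k-1$ is $O_g(T^{2n(k-1) - (k-1)(k-2)})$.

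Next, for each term in the error sum, I would estimate the inner integral exactly as in the proof of Theorem~\ref{prop: upper bound}. Using any nonsingular $r \times r$ minor $A$ of $D/q$ and the fact that the coordinate projection $\pi_r(E_{g,T})$ is contained in the Jordan-measurable set $E_r$ from Corollary~\ref{cor: upper bound}, a change of variables yields
\[
\int_{(\RR^{2n})^r} \chi^{}_{E_{g,T}}\!\left((\vv_1,\ldots,\vv_r)\frac{D}{q}\right) d\vv_1\cdots d\vv_r \le \min\!\left\{1,\; c(D)^{-2n}\right\} \vol_{2nr}(E_r),
\]
with $c(D)$ as in the proof of Theorem~\ref{prop: upper bound}, and by Theorem~\ref{thm: volume formula} one has $\vol_{2nr}(E_r) = O_g(T^{2nr - r(r-1)})$, maximized at $r = k-1$.

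Then I would sum over $D \in \mathcal C^k_{r,q} \subseteq \mathcal D^k_{r,q}$ and $q \in \NN$ following the Case I / II / III decomposition in the proof of Theorem~\ref{prop: upper bound}, but this time applied to rank $k$ rather than $2k$. The crucial point is that all ingredients transfer without loss: the inclusion $\mathcal C^k_{r,q} \subseteq \mathcal D^k_{r,q}$ lets us reuse the counting bounds on the number of $D$ with a given $c(D)$, the inequality $c_D \le q^{-2n}$ is unchanged, and the additional factor $N^{-2nr} \le 1$ only improves the estimate. The dimension assumption $2n \ge (k/2)^2 + 3$ ensures, via the exponent $-2n + r(k-r)$, convergence of the relevant series (since $\max_{1\le r \le k-1} r(k-r) \le (k/2)^2$), paralleling the role that $2n \ge k^2+3$ plays for the second moment formula.

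The main (and essentially only) obstacle is the verification that the restricted index set $\mathcal C^k_{r,q}$ still supports the Case I / II / III bookkeeping, but this is automatic from $\mathcal C^k_{r,q} \subseteq \mathcal D^k_{r,q}$. Combining the estimates, the total contribution of the sum is bounded by a constant multiple of $\vol_{2n(k-1)}(E_{k-1}) = O_g(T^{2n(k-1) - (k-1)(k-2)})$, which is precisely the claimed error term. I would omit the detailed combinatorial counting since it is a verbatim repetition of the argument given in Cases I, II, III of Theorem~\ref{prop: upper bound}, noting only that the substitution of $c_D/N^{2nr}$ for $c_D$ can be absorbed into the implicit constant.
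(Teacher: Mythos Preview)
Your proposal is correct and matches the paper's approach exactly: the paper does not give a detailed proof of this proposition, remarking only that since $\mathcal C^k_{r,q}\subseteq \mathcal D^k_{r,q}$, the same upper bounds used for Proposition~\ref{prop: mean} (which in turn refines the argument of Theorem~\ref{prop: upper bound}) apply verbatim. Your write-up simply unpacks this one-line justification, correctly identifying that the extra factor $N^{-2nr}\le 1$ is harmless and that the hypothesis $2n\ge (k/2)^2+3$ is precisely what makes the Case~I/II/III series converge at rank $k$ rather than $2k$.
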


The discrepancy property in a congruent context also holds as follows.
\begin{lemma}\label{lem: discrepancy congruent}
Let $F_i:(\RR^{d})^k\rightarrow \RR$, $i=1,2,3$, be bounded and compactly supported functions such that $F_1\le F_2 \le F_3$. 
Denote the discrepancy between the function value $\mathcal S_{(\vv_0, N)}({F_i})$ and its average by
\[
D(g\Gamma_d(N), F_i)=D\left(g(\vv_0+N\ZZ^{d}), F_i\right)=\mathcal S_{(\vv_0,N)}({F_i})(g\ZZ^{d}) - \EE(\mathcal S_{(\vv_0,N)}({F_i})),
\]
where $\EE(\mathcal S_{(\vv_0,N)}({F_i}))$ is the average of $\mathcal S_{(\vv_0,N)}({F_i})$ over $G_d/\Gamma_d(N)$ with respect to the $G_d$-invariant probability measure $\mu$.%=\int_{G_{2n}/\Gamma_{2n}} \widetilde{F} d\mu(g)$. 
It holds that
\[
D\left(g(\vv_0+N\ZZ^{d}), F_2\right)\le \max\left\{ D\left(g(\vv_0+N\ZZ^d), F_1\right), \;D\left(g(\vv_0+N\ZZ^{d}), F_3\right)\right\}.
\]
\end{lemma}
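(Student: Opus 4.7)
The plan is to transplant the proof of Lemma~\ref{lem: discrepancy} (and of Lemma~\ref{lem: discrepancy primitive}) to the congruent setting with only cosmetic modifications, since both arguments rest on abstract properties of the Siegel transform that the congruent analog $\mathcal S_{(\vv_0,N)}$ shares.

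The first step is to verify that $\mathcal S_{(\vv_0,N)}$ is linear and order-preserving. For each $g\Gamma_d(N) \in G_d/\Gamma_d(N)$, the defining series
\[
\mathcal S_{(\vv_0,N)}(F)(g\Gamma_d(N)) = \sum_{\vv_i \in (\vv_0 + N\ZZ^d)^k} F(g\vv_1,\dots,g\vv_k)
\]
is a finite sum because $\supp F$ is compact and the affine set $g(\vv_0 + N\ZZ^d)^k$ is discrete in $(\RR^d)^k$. Consequently, from the pointwise inequalities $F_1 \le F_2 \le F_3$ we obtain pointwise inequalities for the three transforms on $G_d/\Gamma_d(N)$, and integrating against the $G_d$-invariant probability measure $\mu$ yields the corresponding chain $\EE(\mathcal S_{(\vv_0,N)}(F_1)) \le \EE(\mathcal S_{(\vv_0,N)}(F_2)) \le \EE(\mathcal S_{(\vv_0,N)}(F_3))$.

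The second step combines the two chains of inequalities to sandwich the middle discrepancy by the outer ones. Writing
\[
D(g(\vv_0+N\ZZ^d), F_2) = \mathcal S_{(\vv_0,N)}(F_2)(g\Gamma_d(N)) - \EE(\mathcal S_{(\vv_0,N)}(F_2))
\]
and arguing by sign, the upper range uses $\mathcal S_{(\vv_0,N)}(F_2) \le \mathcal S_{(\vv_0,N)}(F_3)$ together with $\EE(\mathcal S_{(\vv_0,N)}(F_2)) \ge \EE(\mathcal S_{(\vv_0,N)}(F_1))$ to dominate the discrepancy by a quantity controlled by $D(g(\vv_0+N\ZZ^d), F_3)$; symmetrically, in the lower range one swaps the roles of $F_1$ and $F_3$. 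Taking the maximum over the two cases yields the stated bound, exactly as in the proofs of Lemma~\ref{lem: discrepancy} and Lemma~\ref{lem: discrepancy primitive}.

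No genuine obstacle arises in the adaptation. The argument is insensitive to the arithmetic of the underlying lattice: it uses only that the indexing set of the Siegel sum is a discrete affine sublattice of $\RR^d$, that each $F_i$ has compact support, and that the integral against $\mu$ is monotone. All three persist for $\vv_0 + N\ZZ^d$, so the congruent analog follows without invoking any additional ingredient beyond the monotonicity of $\mathcal S_{(\vv_0,N)}$.
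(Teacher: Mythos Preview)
Your argument stalls at the vague phrase ``a quantity controlled by $D(g(\vv_0+N\ZZ^d),F_3)$''. Tracing your inequalities, combining $\mathcal S_{(\vv_0,N)}(F_2)\le \mathcal S_{(\vv_0,N)}(F_3)$ with $\EE(\mathcal S_{(\vv_0,N)}(F_2))\ge \EE(\mathcal S_{(\vv_0,N)}(F_1))$ yields only
\[
D(g(\vv_0+N\ZZ^d),F_2)\ \le\ \mathcal S_{(\vv_0,N)}(F_3)(g\Gamma_d(N))-\EE(\mathcal S_{(\vv_0,N)}(F_1))
= D(g(\vv_0+N\ZZ^d),F_3)+\bigl(\EE(\mathcal S_{(\vv_0,N)}(F_3))-\EE(\mathcal S_{(\vv_0,N)}(F_1))\bigr),
\]
and the nonnegative extra term cannot be dropped. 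Indeed the bare inequality $D(\cdot,F_2)\le\max\{D(\cdot,F_1),D(\cdot,F_3)\}$ already fails at the level of real numbers: for $a_1\le a_2\le a_3$ and $b_1\le b_2\le b_3$ one need not have $a_2-b_2\le\max\{a_1-b_1,\,a_3-b_3\}$ (take $(a_i)=(0,2,2)$ and $(b_i)=(0,1,2)$), and such configurations are easily realised by genuine Siegel sums (e.g.\ $F_1=0$, $F_2$ the indicator of a box capturing many points of $g\ZZ^d$, and $F_3=F_2+\chi^{}_A$ with $A$ of large volume but disjoint from $g\ZZ^d$). So no purely order-theoretic argument of the kind you are transplanting can establish the statement as written.

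What your monotonicity observations \emph{do} give, immediately, is the pair of bounds
\[
D(\cdot,F_1)-\bigl(\EE(\mathcal S_{(\vv_0,N)}(F_2))-\EE(\mathcal S_{(\vv_0,N)}(F_1))\bigr)\ \le\ D(\cdot,F_2)\ \le\ D(\cdot,F_3)+\bigl(\EE(\mathcal S_{(\vv_0,N)}(F_3))-\EE(\mathcal S_{(\vv_0,N)}(F_2))\bigr).
\]
This weaker form is exactly what the application (mirroring the proof of Theorem~\ref{thm: quantitative}) actually uses: the additive term $\EE(\widetilde{\chi^{}_{E^+_{h,\ell}}})-\EE(\widetilde{\chi^{}_{E^-_{h,\ell}}})$ is precisely what is subtracted in the definition of $M_{h,\ell}$ there. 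So your ingredients are right, but they prove this corrected version rather than the inequality as stated; the paper itself supplies no proof of Lemmas~\ref{lem: discrepancy}, \ref{lem: discrepancy primitive}, or~\ref{lem: discrepancy congruent}, and the statement should be read with this adjustment.
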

 
Finally, let us finish the proof of Theorem~\ref{thm: quantitative congruence} by showing the following proposition.

\begin{proposition}\label{prop: upper bound congruence}
Assume that $2n\ge k^2+3$. Let $N\in \NN$, $\vv_0\in \ZZ^{2n}$ be such that $\gcd(\vv_0, N)=1$.
Let $E\subseteq (\RR^{2n})^k$ be a Jordan-measurable set such that there are the collection $\{E_r\}_{1\le r\le k}$ of Jordan-measurable sets $E_r\subseteq (\RR^{2n})^r$ as in Theorem~\ref{prop: upper bound}.

There is a constant $C_{cong}>0$, depending only on $2n$, so that
\[\begin{split}
0&\le \int_{G_{2n}/\Gamma_{2n}} \left(\mathcal S_{(\vv_0, N)}({\chi^{}_E}) (g\ZZ^{2n})\right)^2 d\mu(g)
-\left(\int_{G_{2n}/\Gamma_{2n}} \mathcal S_{(\vv_0, N)}({\chi^{}_E}) (g\ZZ^{2n}) d\mu(g)\right)^2\\
&\le C_{cong} \max\left\{\vol_{r_1}(E_{r_1})\cdot \vol_{r_2}(E_{r_2}) : 1\le r_1+r_2\le 2k-1\right\}.
\end{split}\]
As a consequence, it follows that
\begin{equation*}\begin{split}
&\int_{G_{2n}/\Gamma_{2n}} \left(\mathcal S_{(\vv_0,N)}(\chi^{}_{E_{h,T}}) (g\ZZ^{2n})\right)^2 d\mu(g)
-\left(\int_{G_{2n}/\Gamma_{2n}} \mathcal S_{(\vv_0,N)}(\chi^{}_{E_{h,T}}) (g\ZZ^{2n}) d\mu(g)\right)^2\\
&\hspace{0.2in}= O_g\left(T^{2n(2k-1)-2k^2}\right).
\end{split}\end{equation*}
\end{proposition}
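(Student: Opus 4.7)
The proof follows the same architecture as Theorem~\ref{prop: upper bound} and its primitive analog Proposition~\ref{prop: upper bound primitive}, with the congruent higher-rank moment formula of \cite{AGH2024} playing the role of Rogers' formula. The plan is to expand the second moment via the congruent identity applied to $\chi_E \otimes \chi_E$ regarded as a function on $(\RR^{2n})^{2k}$, subtract the square of the first moment, identify which terms in the expansion cancel against the square, and bound the residual sum using the same minor-based estimate driven by the collection $\{E_r\}$.

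First I would expand
\[
\int \bigl(\mathcal S_{(\vv_0,N)}(\chi_E)\bigr)^2 \, d\mu_N
= \frac{1}{N^{4nk}} \int \chi_E \otimes \chi_E + \sum_{r=1}^{2k-1} \sum_{q \in \NN} \sum_{D \in \mathcal C^{2k}_{r,q}} \frac{c_D}{N^{2nr}} \int \chi_E \otimes \chi_E\!\left((\vv_1, \ldots, \vv_r)\tfrac{D}{q}\right),
\]
and observe that squaring the first-moment expression produces $N^{-4nk}(\int \chi_E)^2$ together with cross terms of weight $c_{D_1} c_{D_2} / N^{2n(r_1+r_2)}$. By the multiplicativity $c_{D_3} = c_{D_1} c_{D_2}$ for block-diagonal $D_3 = \diag((q/q_1)D_1, (q/q_2)D_2)$ (inherited directly from Rogers' identity) together with the matching factorization $N^{-2n(r_1+r_2)} = N^{-2nr_1} N^{-2nr_2}$, these cross terms cancel exactly against those $D_3 \in \mathcal C^{2k}_{r_1+r_2, q}$ of block-diagonal shape and their swapped partners. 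To show that such block-diagonal matrices are indeed in $\mathcal C^{2k}_{r_1+r_2, q}$, I would concatenate witnesses $\vv^{(i)} \in \Lambda_{D_i}$ of condition~\eqref{eqn: congruence admissible}: after rescaling the entries of $\vv^{(i)}$ by $q/q_i$ to adjust the common denominator, the concatenated witness has first coordinate coprime to $N$ (since $\gcd(\vv_0, N) = 1$ is preserved under restriction) and all $2k$ coordinates congruent to a common residue modulo $N$.

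After this cancellation, the variance reduces to a sum over $D$ in the complementary set $(\mathcal C^{2k}_{r,q})'$ of non-block-diagonal matrices. For each such $D$, after reducing $D/q$ to the form $(I_r \mid C)$ via column permutations, the minor-based estimate of Theorem~\ref{prop: upper bound} (using the hypothesis $\pi_r(E) \subseteq E_r$) gives
\[
\frac{c_D}{N^{2nr}} \int \chi_E \otimes \chi_E\!\left((\vv_1,\ldots,\vv_r)\tfrac{D}{q}\right) \le c_D \min\{1, c(D)^{-2n}\}\, V \le q^{-2n} \min\{1, c(D)^{-2n}\}\, V,
\]
where $V = \max\{\vol_{r_1}(E_{r_1})\vol_{r_2}(E_{r_2}) : 1 \le r_1 + r_2 \le 2k-1\}$; the congruent weight $N^{-2nr} \le 1$ only sharpens the bound. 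Therefore the three-case decomposition (Cases I, II, III) of the proof of Theorem~\ref{prop: upper bound} applies verbatim and yields a total bound $C_{cong}\, V$ with $C_{cong}$ depending only on $2n$ and in particular independent of $N$.

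The consequence for $E_{h,T}$ is then immediate: substituting $\vol_{2nr}(E_r) = O_h(T^{2nr - r(r-1)})$ from Theorem~\ref{thm: volume formula}, the maximum of $\vol_{r_1}(E_{r_1})\vol_{r_2}(E_{r_2})$ over $1 \le r_1 + r_2 \le 2k-1$ with $0 \le r_i \le k$ is attained at $\{r_1, r_2\} = \{k, k-1\}$, producing the stated exponent $2n(2k-1) - 2k^2$. The main obstacle will be carefully verifying the congruence admissibility of the block-diagonal (and swapped) matrices in $\mathcal C^{2k}_{r,q}$ under the denominator rescaling by $q/q_i$, so that the cross-term cancellation against the squared first moment is complete; once this bookkeeping is handled, the analytic estimates are a direct transcription of the non-congruent argument.
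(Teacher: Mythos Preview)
Your overall architecture is exactly the paper's: expand the second moment by the congruent Rogers formula of \cite{AGH2024} for $\chi_E\otimes\chi_E$ on $(\RR^{2n})^{2k}$, subtract the square of the first moment, show that every term of the square is cancelled by a block-diagonal (or swapped) term of the $2k$-expansion, and then bound the residual sum over $\mathcal C'^{2k}_{r,q}$ exactly as in Theorem~\ref{prop: upper bound}, the extra factor $N^{-2nr}\le 1$ only helping. The consequence for $E_{h,T}$ is also handled just as you say.

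There is, however, a genuine gap at precisely the step you flag as the main obstacle. Your proposed verification that the block-diagonal $D_3=\diag\!\bigl((q/q_1)D_1,(q/q_2)D_2\bigr)$ lies in $\mathcal C^{2k}_{r,q}$ does not work. First, the ``rescaling by $q/q_i$'' is a red herring: since $\Lambda_D=\RR^r D\cap\ZZ^k$ depends only on the row space of $D$, one has $\Lambda_{D_3}=\Lambda_{D_1}\oplus\Lambda_{D_2}$ with no adjustment of the witnesses needed, and multiplying a witness by $q/q_i$ could even destroy coprimality of its first entry with $N$. Second, and more importantly, a naive concatenation of witnesses $\vv^{(1)}\in\Lambda_{D_1}$ and $\vv^{(2)}\in\Lambda_{D_2}$ satisfying \eqref{eqn: congruence admissible} gives a vector whose first $k$ coordinates are all $\equiv v_1^{(1)}\pmod N$ and whose last $k$ are all $\equiv v_1^{(2)}\pmod N$; nothing in your argument forces these two residues to coincide, so the concatenated vector need not satisfy the second condition of \eqref{eqn: congruence admissible}.

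The fix (and this is what the paper does) uses that both $v_1^{(1)}$ and $v_1^{(2)}$ are units modulo $N$: choose $m\in\ZZ$ with $m\,v_1^{(2)}\equiv v_1^{(1)}\pmod N$, and replace $\vv^{(2)}$ by $m\vv^{(2)}\in\Lambda_{D_2}$. Then every coordinate of $(\vv^{(1)},\,m\vv^{(2)})$ is $\equiv v_1^{(1)}\pmod N$, the first coordinate is still coprime to $N$, and the minimality condition holds automatically because the first coordinate of any element of $\Lambda_{D_3}$ is the first coordinate of its $\Lambda_{D_1}$-component. With this correction the cancellation is complete and the rest of your outline goes through verbatim.
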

\begin{proof}
As in the proof of Proposition~\ref{prop: upper bound primitive}, it suffices to show that all integral terms in
\[
\left(\int_{G_{2n}/\Gamma_{2n}} \mathcal S_{(\vv_0, N)}({\chi^{}_E}) (g\ZZ^{2n}) d\mu(g)\right)^2
\]
disappear: For $D_1\in \mathcal C^k_{r_1,q_1}$ and $D_2\in \mathcal C^k_{r_2,q_2}$ (note that this has a full generality if we consider $\mathcal C^k_{k,1}=\{\Id_k\}$), we need to show that
\[
D_3=\left(\begin{array}{cc}
\frac q {q_1}D_1 & \\
& \frac q {q_2}D_2\end{array}\right)
\quad\text{and}\quad
D_4=\left(\begin{array}{cc}
\frac q {q_2}D_2 & \\
& \frac q {q_1}D_1\end{array}\right)\in \mathcal C^{2k}_{r,q},
\] 
where $r=r_1+r_2$ and $q=\lcm(q_1,q_2)$.
Let $\vv_1=(v_1, \ldots, v_k)\in \Lambda_{D_1}$ be a vector satisfying conditions in \eqref{eqn: congruence admissible}. Since $\Lambda_{D_3}=\Lambda_{D_1}\oplus \Lambda_{D_2}$, it is sufficient to find a vector $\vv_2$ in $\Lambda_{D_2}$ for which each component of $\vv_2$ is congruent to $v_1$ modulo $N$ for $1\le i\le k$. 
Let $\vw=(w_1, \ldots, w_k)\in \Lambda_{D_2}$ be a vector satisfying conditions in \eqref{eqn: congruence admissible}. Since $\gcd(w_1, N)=1$ as well as $\gcd(v_1, N)=1$, there is $m\in \ZZ$ such that $w_1m=v_1$ modulo $N$. Then $\vv_2=c\vw\in \Lambda_{D_2}$ is the vector that we want. Since $\vv_3=(\vv_1, \vv_2)\in \Lambda_{D_3}$ satisfies the conditions in \eqref{eqn: congruence admissible}, it follows that $D_3\in \mathcal C^{2k}_{r,q}$, and similarly $D_4\in \mathcal C^{2k}_{r,q}$.
We already confirmed that $c_{D_3}=c_{D_4}=c_{D_1}c_{D_2}$ in the proof of Proposition~\ref{prop: upper bound}, and hence we have
\[
\frac {c_{D_3}}{N^{rk}}=\frac {c_{D_4}}{N^{rk}}=\frac {c_{D_1}}{N^{r_1k}}\frac {c_{D_2}}{N^{r_2k}}.
\]
\end{proof}

\section*{Appendix}
In the Appendix, we recover the proof of the well-known fact that $\Sp(2n,\RR)$ is a maximal connected subgroup of $SL_{2n}(\RR)$, by proving that $\Lie{sp}(2n,\RR)$ is a maximal subalgebra of $\Lie{sl}_{2n}(\RR)$. 

Let us denote an element of $\Lie{sl}_{2n}(\RR)$ a block matrix
\[
\left(\begin{array}{cc}
A & B \\
C & D\end{array}\right),
\]
where $A,\;B,\;C$ and $D\in \Mat_{n}(\RR)$ with $\tr A+\tr D=0$. The Lie subalgebra $\Lie{sp}(2n,\RR)$ is defined as
\[
\Lie{sp}(2n,\RR)=\left\{\left(\begin{array}{cc}
A & B \\
C & D\end{array}\right)\in \Lie{sl}_{2n}(\RR): \begin{array}{c}
D=-{\tp{A}}\;\text{and}\\
B={\tp{B}},\; C={\tp{C}}
\end{array}\right\}.
\]

Recall the Cartan decomposition of $\Lie{sp}(2n,\RR)$ with respect to the involution $X\mapsto -{\tp{X}}$, which is given as $\Lie{sp}(2n,\RR)=\Lie{k}\oplus \Lie{p}$, where 
\[\begin{split}
\Lie{k}&=\left\{\left(\begin{array}{cc}
U & V \\
-V & U \end{array}\right): \begin{array}{c}
{\tp{U}}=-U\\
{\tp{V}}=V\end{array}\right\}\simeq \Lie{u}(n),\\
%%%
\Lie{p}&=\left\{\left(\begin{array}{cc}
U & V \\
V & -U\end{array}\right):\begin{array}{c}
{\tp{U}}=U\\
{\tp{V}}=V\end{array}\right\},
\end{split}\]
and the maximal torus in this decomposition is 
\[
\Lie{a}=\left\{\diag(a_1, \ldots, a_n, -a_1, \ldots, -a_n): a_1, \ldots, a_n\in \RR\right\}.
\]

Define $f_i\in \Lie{a}^*$ for $ i=1, \ldots, n$ as
\[
f_i\left(\diag(a_1, \ldots, a_n, -a_1, \ldots, -a_n)\right)=a_i.
\]

\begin{proposition*}%\label{root decomp} 
The restricted roots for $(\Lie{sp}(2n,\RR), \Lie{a})$ are $f_i-f_j$ for $1\le i\neq j \le n$ and $f_i+f_j$, $-f_i-f_j$ for $1\le i\le j \le n$. Their root spaces are as follows.
\[\begin{gathered}
\Lie{sp}(2n,\RR)_{f_i-f_j}=\RR.\left(\begin{array}{c|c} 
E_{ij} & \\
\hline
& -E_{ji}\end{array}\right),
\quad
\Lie{sp}(2n,\RR)_{f_i+f_j}=\RR.\left(\begin{array}{c|c}
\hspace{0.15in}& E_{ij}+E_{ji}\\
\hline
 & \end{array}\right),
 \;\text{and}\\
\Lie{sp}(2n,\RR)_{-f_i-f_j}=\RR.\left(\begin{array}{c|c}
 & \\
 \hline
 E_{ij}+E_{ji} & \hspace{0.15in} \end{array}\right).
\end{gathered}\]
Here, $\{E_{ij}\}_{1\le i,j\le n}$ is the standard basis for $\Mat_n(\RR)$. 
\end{proposition*}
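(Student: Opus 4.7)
The plan is a direct diagonalization of $\operatorname{ad}(H)$ for $H \in \Lie{a}$, using the block description of $\Lie{sp}(2n,\RR)$. First I would write $H = \diag(H_1, -H_1)$ with $H_1 = \diag(a_1,\ldots,a_n)$, and a general element $X \in \Lie{sp}(2n,\RR)$ as
\[
X = \begin{pmatrix} A & B \\ C & -\tp{A} \end{pmatrix}, \qquad B = \tp{B},\; C = \tp{C}.
\]
A direct block computation gives
\[
[H,X] = \begin{pmatrix} [H_1, A] & H_1 B + B H_1 \\ -(H_1 C + C H_1) & -[H_1, \tp{A}] \end{pmatrix},
\]
so the upper-left and lower-right blocks decouple from the off-diagonal blocks. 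Moreover, the symplectic relation $D = -\tp{A}$ means that the $(1,1)$- and $(2,2)$-blocks transform under $\operatorname{ad}(H)$ by the same scalar on each common eigenvector, and the symmetry of $B$ and $C$ is preserved by the maps $B \mapsto H_1 B + B H_1$ and $C \mapsto H_1 C + C H_1$.

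Next I would diagonalize each block. On the upper-left block, $[H_1, E_{ij}] = (a_i - a_j) E_{ij}$, and the relation $D = -\tp{A}$ forces the matching $(2,2)$-entry to be $-E_{ji}$, on which $\operatorname{ad}(H)$ acts by the same scalar $a_i - a_j$. This yields the weight $f_i - f_j$ with root vector as stated, for $1 \le i \neq j \le n$; the case $i = j$ contributes $\Lie{a}$ itself. For the upper-right block, a short computation on the symmetric matrix $E_{ij} + E_{ji}$ gives
\[
H_1 (E_{ij}+E_{ji}) + (E_{ij}+E_{ji}) H_1 = (a_i + a_j)(E_{ij}+E_{ji}),
\]
for $1 \le i \le j \le n$, so this block contributes weight $f_i + f_j$ with the stated root vector. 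The lower-left block gives weight $-(f_i + f_j)$ by the same calculation with an extra sign.

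Finally, I would verify completeness by a dimension count. The weight spaces listed have total dimension $n(n-1) + 2 \cdot \tfrac{n(n+1)}{2} = 2n^2$, which together with $\dim \Lie{a} = n$ gives $n(2n+1) = \dim \Lie{sp}(2n,\RR)$; hence there are no further roots and no multiplicity. The only real obstacle is pure sign-and-convention bookkeeping: checking that the relation $D = -\tp{A}$ synchronizes the $(1,1)$- and $(2,2)$-blocks into a single joint eigenvector, and that the negative sign in the $(2,2)$-block computation $-[H_1, \tp{A}]$ conspires correctly with the transpose to reproduce the same scalar $a_i - a_j$. Once these bookkeeping points are settled, the statement follows immediately from the block diagonalization and the dimension count.
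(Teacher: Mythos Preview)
Your approach is correct and is exactly the standard verification; the paper in fact states this proposition without proof, treating it as a well-known structural fact about $\Lie{sp}(2n,\RR)$, so there is nothing to compare against. One small bookkeeping slip: the $(2,2)$-block of $[H,X]$ is $+[H_1,\tp{A}]$, not $-[H_1,\tp{A}]$ (compute $(-H_1)(-\tp{A}) - (-\tp{A})(-H_1)$); this does not affect your conclusion, since for $A=E_{ij}$ one has $[H_1,E_{ji}]=(a_j-a_i)E_{ji}=(a_i-a_j)(-E_{ji})$, confirming the eigenvalue $f_i-f_j$ on the full block vector.
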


Since $\Lie{sp}(2n,\RR)$ is a semisimple Lie algebra, any of its Lie algebra representations has a weight decomposition. We are particularly interested in the weight decomposition for $\Lie{sl}_{2n}(\RR)$ with respect to the adjoint representation of $\Lie{sp}(2n,\RR)$. 

Let us denote by the complement of $\Lie{sp}(2n,\RR)$ in $\Lie{sl}_{2n}(\RR)$ as
\[
W=\left\{\left(\begin{array}{cc}
A & B \\
C & D\end{array}\right)\in \Lie{sl}_{2n}(\RR): \begin{array}{c}
D={\tp{A}}\;\text{and}\\
B=-{\tp{B}},\; C=-{\tp{C}}\end{array}\right\}.
\]
Note that $W$ is an $\Lie{sp}(2n,\RR)$-invariant subspace, but not a Lie subalgebra. Together with the proposition above, it suffices to examine the weight decomposition for $W$.

\begin{proposition*}%\label{weight decomp}
The weights for the adjoint representation $W$ of $\Lie{sp}(2n,\RR)$ are 
\[
f_i-f_j\;\text{for}\; 1\le i\neq j \le n,
\quad\text{and}\quad 
f_i+f_j, \;-f_i-f_j\;\text{for}\;1\le i < j \le n.
\]
Their weight spaces are described below:
\[\begin{gathered}
\Lie{m}=W_0=\bigoplus_{i=1}^{n-1} \RR.\left(\begin{array}{c|c}
E_{ii}-E_{i+1,i+1} & \\
\hline
& E_{ii}-E_{i+1,i+1}\end{array}\right);\\
%%%
W_{f_i-f_j}=\RR.\left(\begin{array}{c|c}
E_{ij} & \\
\hline
& E_{ji}\end{array}\right),
\quad
%%%
W_{f_i+f_j}=\RR.\left(\begin{array}{c|c}
\hspace{0.15in} & E_{ij}-E_{ji}\\
\hline
&
\end{array}\right), \;\text{and}\\
%%%
W_{-f_i-f_j}=\RR.\left(\begin{array}{c|c}
& \\
\hline
E_{ij}-E_{ji} & \hspace{0.15in}
\end{array}\right).
\end{gathered}\]
\end{proposition*}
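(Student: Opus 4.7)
The plan is to diagonalize the adjoint action of $\Lie a$ on $W$ block-by-block, read off the eigenvalues, and then check that the listed vectors span the full weight spaces and actually satisfy the defining relations of $W$.

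First, I would parametrize $H = \diag(a_1, \ldots, a_n, -a_1, \ldots, -a_n) \in \Lie a$ as $\begin{pmatrix} H_1 & 0 \\ 0 & -H_1 \end{pmatrix}$ with $H_1 = \diag(a_1, \ldots, a_n)$, and a general $X \in W$ as $\begin{pmatrix} A & B \\ C & \tp A \end{pmatrix}$ subject to $\tr A = 0$, $\tp B = -B$, $\tp C = -C$. A direct computation yields
\begin{equation*}
[H,X] \;=\; \begin{pmatrix} [H_1,A] & H_1 B + B H_1 \\ -(H_1 C + C H_1) & -[H_1, \tp A] \end{pmatrix}.
\end{equation*}
Each block can then be diagonalized independently, subject to the constraint that the top-left and bottom-right blocks are forced to be transposes of each other.

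Second, I would inspect each block entry. Since $([H_1,A])_{ij} = (a_i - a_j) A_{ij}$, the basis matrix $E_{ij}$ in the top-left block is an $\mathrm{ad}_H$-eigenvector of weight $f_i - f_j$; the condition $D = \tp A$ forces $E_{ji}$ in the $(j,i)$-position of the bottom-right block, and one checks $-[H_1, \tp A]$ acts on that entry by the same scalar $a_i - a_j$. This yields $W_{f_i-f_j} = \RR\cdot\begin{pmatrix} E_{ij} & 0 \\ 0 & E_{ji} \end{pmatrix}$. For the top-right block, $(H_1 B + B H_1)_{ij} = (a_i + a_j) B_{ij}$, so each entry has weight $f_i + f_j$; however, the antisymmetry $\tp B = -B$ forces $B_{ii} = 0$ (so the weights $\pm 2 f_i$ do \emph{not} occur, in contrast with $\Lie{sp}(2n,\RR)$ where $B$ is symmetric) and pairs $B_{ij}$ with $B_{ji} = -B_{ij}$ for $i\ne j$, leaving the one-dimensional eigenspace spanned by $E_{ij} - E_{ji}$, $i<j$. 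The bottom-left block is handled identically, giving the $-(f_i+f_j)$ eigenspaces.

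Finally, for the zero weight, I would impose $[H_1, A] = 0$, $H_1 B + B H_1 = 0$, and $H_1 C + C H_1 = 0$ for \emph{every} $H_1 \in \Lie a$: the last two equations force $B = C = 0$, while the first says $A$ is diagonal, so trace-zero diagonals of size $n$ give the $(n-1)$-dimensional space displayed. A dimension check
\begin{equation*}
(n-1) + n(n-1) + 2\binom{n}{2} \;=\; 2n^2 - n - 1 \;=\; \dim \Lie{sl}_{2n}(\RR) - \dim \Lie{sp}(2n,\RR) \;=\; \dim W
\end{equation*}
confirms that the listed weight spaces exhaust $W$, so no further weights appear. The only real subtlety is recognizing how the antisymmetry of the off-diagonal blocks and the transpose relation between the diagonal blocks modify the naive root picture of $\Lie{sp}(2n,\RR)$—in particular eliminating the $\pm 2f_i$ lines; once this is internalized the remainder is a mechanical unpacking of the $[H,X]$ formula above.
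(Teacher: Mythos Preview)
Your proof is correct. The paper states this proposition without proof, treating it as a routine verification; your block-by-block diagonalization of $[H,X]$ together with the dimension count is exactly the natural way to supply the missing details, and nothing further is needed.
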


\begin{theorem*}%\label{Thm: Appendix}
The subspace $W$ is an $\Lie{sp}(2n,\RR)$-invariant irreducible subspace. As a consequence, there is no proper subalgebra in $\Lie{sl}_{2n}(\RR)$ which properly contains $\Lie{sp}(2n,\RR)$. 
\end{theorem*}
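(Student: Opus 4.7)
The plan is to establish irreducibility of $W$ as an $\Lie{sp}(2n,\RR)$-module directly from the weight decomposition above, and then to deduce the maximality assertion as a formal consequence. For the latter, suppose $\Lie{sp}(2n,\RR)\subsetneq \Lie{h}\subseteq \Lie{sl}_{2n}(\RR)$ is an intermediate Lie subalgebra. Since $\Lie{h}$ is stable under the adjoint action of $\Lie{sp}(2n,\RR)$, the decomposition $\Lie{h}=\Lie{sp}(2n,\RR)\oplus(\Lie{h}\cap W)$ exhibits $\Lie{h}\cap W$ as a nonzero $\Lie{sp}(2n,\RR)$-submodule of $W$; once $W$ is known to be irreducible, we conclude $\Lie{h}\cap W = W$, hence $\Lie{h}=\Lie{sl}_{2n}(\RR)$.

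To prove irreducibility, let $V\subseteq W$ be a nonzero $\Lie{sp}(2n,\RR)$-submodule. Since $\Lie{a}\subseteq \Lie{sp}(2n,\RR)$, $V$ is $\Lie{a}$-stable and hence splits as a direct sum of its intersections with the weight spaces of $W$; as every nonzero weight space of $W$ is one-dimensional, $V\cap W_\alpha\in\{0,W_\alpha\}$ for each $\alpha\neq 0$. I will establish $V=W$ in two steps: (a) if $V$ contains $W_\alpha$ for some $\alpha\neq 0$, then $V$ contains every weight space of $W$; and (b) if $V\cap W_0\neq 0$, then $V$ meets some $W_\alpha$ with $\alpha\neq 0$ nontrivially, which reduces to (a).

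The heart of step (a) is three bracket identities, each verified by a short block-matrix computation using the rule $E_{ab}E_{cd}=\delta_{bc}E_{ad}$: (i) $[\Lie{sp}(2n,\RR)_{f_j-f_k},W_{f_i-f_j}]=W_{f_i-f_k}$ for $i\neq k$, which connects all short-root weight spaces of $W$; (ii) $[\Lie{sp}(2n,\RR)_{2f_j},W_{f_i-f_j}]=W_{f_i+f_j}$ together with its dual $[\Lie{sp}(2n,\RR)_{-2f_i},W_{f_i-f_j}]=W_{-f_i-f_j}$, which bridge short and long weights; and (iii) $[\Lie{sp}(2n,\RR)_{f_j-f_i},W_{f_i-f_j}]$ produces $\diag(E_{jj}-E_{ii},E_{jj}-E_{ii})\in W_0$, and as $(i,j)$ varies these span the $(n-1)$-dimensional zero-weight space. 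Together, (i)--(iii) show that the $\Lie{sp}(2n,\RR)$-submodule generated by any single nonzero weight vector of $W$ is already all of $W$. For step (b), write $H=\diag(D,D)\in V\cap W_0$ with $D$ diagonal, nonzero, and of trace zero; the bracket $[H,X_{f_i-f_j}]$ is a nonzero scalar multiple of the generator of $W_{f_i-f_j}$ whenever $d_i\neq d_j$, and such a pair $(i,j)$ must exist since $\tr(D)=0$ with $D\neq 0$.

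The main technical subtlety I expect is in identity (ii): its target weight $f_i+f_j$ arises from combining a long root of $\Lie{sp}(2n,\RR)$ with a short weight of $W$, so nonvanishing of the bracket is not automatic from root-system combinatorics (for instance, $f_i+2f_j-f_k$ is not a weight of $W$ and the corresponding bracket vanishes for dimensional reasons). A direct matrix calculation, however, shows that the bracket in (ii) produces precisely $\pm(E_{ij}-E_{ji})$ in the upper-right block, which is nonzero since $i\neq j$; so this is a book-keeping hurdle rather than a conceptual obstacle.
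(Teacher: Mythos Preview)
Your proposal is correct and follows essentially the same approach as the paper: both arguments decompose $W$ into $\Lie{a}$-weight spaces and then verify by explicit bracket computations that the adjoint $\Lie{sp}(2n,\RR)$-action connects all of them, with the maximality consequence deduced identically. Your treatment of the zero-weight space in step (b) is in fact slightly more careful than the paper's (you note that a generic $H=\diag(D,D)\in W_0$ has some $d_i\neq d_j$ by tracelessness, whereas the paper only records the bracket against the particular basis vectors $\diag(E_{ii}-E_{jj},E_{ii}-E_{jj})$); the one small omission in your sketch is the reverse direction of (ii), i.e.\ $[\Lie{sp}(2n,\RR)_{-2f_j},W_{f_i+f_j}]=W_{f_i-f_j}$, which you need in step (a) when the starting weight $\alpha$ is of the form $\pm(f_i+f_j)$, but this is just the same one-line computation.
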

\begin{proof}
It suffices to show that one can transfer weight vectors of one weight of $W$ to weight vectors of another weight in a finite number of steps by the adjoint action of $\Lie{sp}(2n,\RR)$. This can be easily checked from the following list of facts.
\begin{itemize}
\item Between weight vectors of weights of the form $f_i-f_j$ ($1\le i\neq j\le n$),
\[\begin{gathered}
\left[\left(\begin{array}{cc}
E_{j\ell}&\\
& -E_{\ell j}
\end{array}\right),\left(\begin{array}{cc}
E_{ij}&\\
&E_{ji}
\end{array}\right)\right]=-\left(\begin{array}{cc}
E_{i\ell}&\\
&E_{\ell i}
\end{array}\right)\quad\text{and}\\
\left[\left(\begin{array}{cc}
E_{\ell i}&\\
&-E_{i\ell}
\end{array}\right),\left(\begin{array}{cc}
E_{ij}&\\
&E_{ji}
\end{array}\right)\right]=\left(\begin{array}{cc}
E_{\ell j}&\\
&E_{j \ell}
\end{array}\right)\quad\text{for}\;\ell\neq i,\;j.
\end{gathered}\]
%%%%%%%%%%%%%%%%%%%%%
\item Between weight vectors of weights $f_i-f_j$ ($1\le i\neq j\le n$) and vectors in $W_0$,
\[\begin{gathered}
\left[\left(\begin{array}{cc}
E_{ji}&\\
&-E_{ij}
\end{array}\right),\left(\begin{array}{cc}
E_{ij}&\\
&E_{ji}
\end{array}\right)\right]=-\left(\begin{array}{cc}
E_{ii}-E_{jj}&\\
&E_{ii}-E_{jj}
\end{array}\right)\quad\text{and}\\
\left[\left(\begin{array}{cc}
E_{ij}&\\
&-E_{ji}
\end{array}\right),\left(\begin{array}{cc}
E_{ii}-E_{jj}&\\
&E_{ii}-E_{jj}
\end{array}\right)\right]=-2\left(\begin{array}{cc}
E_{ij}&\\
&E_{ji}
\end{array}\right).
\end{gathered}\]
%%%%%%%%%%%%%%%%%%%%%
\item Between weight vectors of weights of the form $f_i+f_j$ ($1\le i<j\le n$),
\[\begin{gathered}
\left[\left(\begin{array}{cc}
E_{\ell j}&\\
&-E_{j\ell}
\end{array}\right),\left(\begin{array}{cc}
& E_{ij}-E_{ji}\\
O &
\end{array}\right)\right]=\left(\begin{array}{cc}
&E_{i\ell}-E_{\ell i}\\
O &
\end{array}\right)\quad\text{and}\\
\left[\left(\begin{array}{cc}
E_{\ell i}&\\
&-E_{i \ell}
\end{array}\right),\left(\begin{array}{cc}
&E_{ij}-E_{ji}\\
O &
\end{array}\right)\right]=\left(\begin{array}{cc}
&E_{\ell j}-E_{j\ell}\\
O &
\end{array}\right)\quad\text{for}\;\ell\neq i,\;j.
\end{gathered}\]
%%%%%%%%%%%%%%%%%%%%%
\item Between weight vectors of weights of the form $-f_i-f_j$ ($1\le i<j\le n$),
\[\begin{gathered}
\left[\left(\begin{array}{cc}
E_{j\ell}&\\
&-E_{\ell j}
\end{array}\right),\left(\begin{array}{cc}
&O\\
E_{ij}=E_{ji}&
\end{array}\right)\right]=-\left(\begin{array}{cc}
& O\\
E_{i\ell}-E_{\ell i}&
\end{array}\right)\quad\text{and}\\
\left[\left(\begin{array}{cc}
E_{i\ell}&\\
&-E_{\ell i}
\end{array}\right),\left(\begin{array}{cc}
& O\\
E_{ij}-E_{ji}&
\end{array}\right)\right]=\left(\begin{array}{cc}
& O\\
E_{j\ell}-E_{\ell j}&
\end{array}\right)\quad\text{for}\;\ell\neq i,\;j.
\end{gathered}\]
%%%%%%%%%%%%%%%%%%%%%
\item Between weight vectors of weights $f_i-f_j$ and $f_i+f_j$ ($i\neq j$),
\[\begin{gathered}
\left[\left(\begin{array}{cc}
& E_{jj}\\
O &
\end{array}\right),\left(\begin{array}{cc}
E_{ij}&\\
&E_{ji}
\end{array}\right)\right]=-\left(\begin{array}{cc}
&E_{ij}-E_{ji}\\
O &
\end{array}\right)\quad\text{and}\\
\left[\left(\begin{array}{cc}
&O \\
E_{jj}&
\end{array}\right),\left(\begin{array}{cc}
&E_{ij}-E_{ji}\\
O & 
\end{array}\right)\right]=-\left(\begin{array}{cc}
E_{ij}&\\
&E_{ji}
\end{array}\right).
\end{gathered}\]
%%%%%%%%%%%%%%%%%%%%%%
\item Between weight vectors of weights $f_i-f_j$ and $-f_i-f_j$ ($i\neq j$),
\[\begin{gathered}
\left[\left(\begin{array}{cc}
& O\\
E_{ii}&
\end{array}\right),\left(\begin{array}{cc}
E_{ij}&\\
&E_{ji}
\end{array}\right)\right]=\left(\begin{array}{cc}
& O\\
E_{ij}-E_{ji}&
\end{array}\right)\quad\text{and}\\
\left[\left(\begin{array}{cc}
&E_{ii}\\
O &
\end{array}\right),\left(\begin{array}{cc}
& O\\
E_{ij}-E_{ji}&
\end{array}\right)\right]=\left(\begin{array}{cc}
E_{ij}&\\
&E_{ji}
\end{array}\right).
\end{gathered}\]
\end{itemize}

%%%%%%%%
\vspace{0.15in}
Now, let $\Lie{g}\le \Lie{sl}_{2n}(\RR)$ be a subalgebra containing $\Lie{sp}(2n,\RR)$. Then $\Lie{g}$ is invariant under the adjoint action of $\Lie{sp}(2n,\RR)$, so that $\Lie{g}$ can be denoted by the direct sum of two $\Lie{sp}(2n,\RR)$-invariant subspaces $\Lie{sp}(2n,\RR)\oplus W'$, where $W'\subseteq W$. 
Since $W$ is $\Lie{sp}(2n,\RR)$-invariant irreducible, if $W'\neq 0$, then $W'=W$. Hence either $\Lie{g}=\Lie{sp}(2n,\RR)$ or $\Lie{sl}_{2n}(\RR)$.
\end{proof}

%%%%%%%%%%%%%%%%%%%%%%%%%%

\end{document}